\begin{document}

\newcommand{\s}{\sigma}
\newcommand{\al}{\alpha}
\newcommand{\om}{\omega}
\newcommand{\be}{\beta}
\newcommand{\la}{\lambda}
\newcommand{\vp}{\varphi}

\newcommand{\bo}{\mathbf{0}}
\newcommand{\bone}{\mathbf{1}}

\newcommand{\sse}{\subseteq}
\newcommand{\contains}{\supseteq}
\newcommand{\forces}{\Vdash}

\newcommand{\FIN}{\mathrm{FIN}}
\newcommand{\Fin}{\mathrm{Fin}}
\newcommand{\cf}{\mathrm{cf}}
\newcommand{\add}{\mathrm{add}}

\newcommand{\ve}{\vee}
\newcommand{\w}{\wedge}
\newcommand{\bv}{\bigvee}
\newcommand{\bw}{\bigwedge}
\newcommand{\bcup}{\bigcup}
\newcommand{\bcap}{\bigcap}

\newcommand{\rgl}{\rangle}
\newcommand{\lgl}{\langle}
\newcommand{\lr}{\langle\ \rangle}
\newcommand{\re}{\restriction}

\newcommand{\bB}{\mathbb{B}}
\newcommand{\bP}{\mathbb{P}}
\newcommand{\bR}{\mathbb{R}}
\newcommand{\bW}{\mathbb{W}}
\newcommand{\bX}{\mathbb{X}}
\newcommand{\bN}{\mathbb{N}}
\newcommand{\bQ}{\mathbb{Q}}
\newcommand{\bS}{\mathbb{S}}
\newcommand{\St}{\tilde{S}}
   
\newcommand{\sd}{\triangle}
\newcommand{\cl}{\prec}
\newcommand{\cle}{\preccurlyeq}
\newcommand{\cg}{\succ}
\newcommand{\cge}{\succcurlyeq}
\newcommand{\dom}{\mathrm{dom}\,}

\newcommand{\lra}{\leftrightarrow}
\newcommand{\ra}{\rightarrow}
\newcommand{\llra}{\longleftrightarrow}
\newcommand{\Lla}{\Longleftarrow}
\newcommand{\Lra}{\Longrightarrow}
\newcommand{\Llra}{\Longleftrightarrow}
\newcommand{\rla}{\leftrightarrow}
\newcommand{\lora}{\longrightarrow}
\newcommand{\E}{\mathrm{E}}
\newcommand{\rank}{\mathrm{rank}}
\newcommand{\lefin}{\le_{\mathrm{fin}}}
\newcommand{\Ext}{\mathrm{Ext}}
\newcommand{\Erdos}{Erd{\H{o}}s}
\newcommand{\Pudlak}{Pudl{\'{a}}k}
\newcommand{\Rodl}{R{\"{o}}dl}
\newcommand{\Juhasz}{Juh{\'{a}}sz}
\newcommand{\Todorcevic}{Todor{\v{c}}evi{\'{c}}}
\newcommand{\Pospisil}{Posp{\'{i}}{\v{s}}il}
\newcommand{\Fraisse}{Fra{\"{i}}ss{\'{e}}}
\newcommand{\Nesetril}{Ne{\v{s}}et{\v{r}}il}

\newtheorem{thm}{Theorem}  
\newtheorem{prop}[thm]{Proposition} 
\newtheorem{lem}[thm]{Lemma} 
\newtheorem{cor}[thm]{Corollary} 
\newtheorem{fact}[thm]{Fact}    
\newtheorem{facts}[thm]{Facts}      
\newtheorem*{thmMT}{Main Theorem}
\newtheorem*{thmMTUT}{Main Theorem for $\vec{\mathcal{U}}$-trees}
\newtheorem*{thmnonumber}{Theorem}
\newtheorem*{mainclaim}{Main Claim}

\theoremstyle{definition}   
\newtheorem{defn}[thm]{Definition} 
\newtheorem{example}[thm]{Example} 
\newtheorem{conj}[thm]{Conjecture} 
\newtheorem{prob}[thm]{Problem} 
\newtheorem{examples}[thm]{Examples}
\newtheorem{question}[thm]{Problem}
\newtheorem{problem}[thm]{Problem}
\newtheorem{openproblems}[thm]{Open Problems}
\newtheorem{openproblem}[thm]{Open Problem}
\newtheorem{conjecture}[thm]{Conjecture}
\newtheorem*{problem1}{Problem 1}
\newtheorem*{problem2}{Problem 2}
\newtheorem*{problem3}{Problem 3}
\newtheorem*{notation}{Notation}

\theoremstyle{remark} 
\newtheorem*{rem}{Remark} 
\newtheorem*{rems}{Remarks} 
\newtheorem*{ack}{Acknowledgments} 
\newtheorem*{note}{Note}
\newtheorem*{claim}{Claim}
\newtheorem*{claim1}{Claim $1$}
\newtheorem*{claim2}{Claim $2$}
\newtheorem*{claim3}{Claim $3$}
\newtheorem*{claim4}{Claim $4$}
\newtheorem{claimn}{Claim}
\newtheorem{subclaim}{Subclaim}
\newtheorem*{subclaimnn}{Subclaim}
\newtheorem*{subclaim1}{Subclaim (i)}
\newtheorem*{subclaim2}{Subclaim (ii)}
\newtheorem*{subclaim3}{Subclaim (iii)}
\newtheorem*{subclaim4}{Subclaim (iv)}
\newtheorem{case}{Case}
 \newtheorem*{case1}{Case 1}
\newtheorem*{case2}{Case 2}
\newtheorem*{case3}{Case 3}

\newcommand{\noprint}[1]{\relax}
\newenvironment{nd}{\noindent\color{red}Note to Stevo: }{}

\title{Survey on the Tukey theory of ultrafilters}
\author{Natasha Dobrinen}
\address{Department of Mathematics\\
 University of Denver \\
2360 Gaylord St.\\ Denver, CO \ 80208 U.S.A.}
\email{natasha.dobrinen@du.edu}
\urladdr{http://web.cs.du.edu/~ndobrine}

\begin{abstract}
This article surveys  results regarding the  Tukey theory of ultrafilters on  countable base sets.
The driving forces for this investigation are 
Isbell's Problem and the question of how closely related the  Rudin-Keisler  and Tukey
reducibilities are.
We review work on the possible structures of cofinal types
 and
conditions which guarantee that an ultrafilter is  below the Tukey maximum.
The known canonical forms for cofinal maps on ultrafilters are reviewed,
as well as their applications to finding 
which structures
 embed into the Tukey types of ultrafilters.
With the addition of some Ramsey theory,
fine analyses of the structures at the bottom of the Tukey hierarchy are made.
\end{abstract}


\maketitle

\section{Introduction and  Brief History of Tukey Reducibility}\label{intro}

In \cite{Tukey40}, Tukey introduced the Tukey ordering  to 
develop the notion of Moore-Smith convergence in topology.
After its initial success  in helping develop general topology,
Tukey reducibility was studied in its own right as a means for comparing partially ordered sets.
It was employed by Day in \cite{Day44}, Isbell in \cite{Isbell65} and \cite{Isbell72},  and \Todorcevic\ in \cite{TodorcevicDirSets85} and \cite{Todorcevic96}  to find rough classifications of partially ordered sets, a setting where isomorphism is too fine  a notion to give any reasonable classification.
More recently, Tukey reducibility has proved to be foundational in the study of analytic partial orderings (see \cite{Solecki/Todorcevic04}, \cite{Solecki/Todorcevic11} and \cite{Matrai10}).
This background will be discussed in some more detail below.

When restricting attention to the class of ultrafilters on a countable base set, 
Tukey reducibility turns out to be a coarsening of the well-studied Rudin-Keisler reducibility.
In the last several years, this topic has been the focus of much work.
In this article, we survey the known results and continuing work regarding the structure of the Tukey types of ultrafilters.  In each section we shall point out the current outstanding questions related to the topic of that section.  As there are too many interesting open problems to list, we will  point out those questions which guide the rest of the study of Tukey types of ultrafilters.

We now begin the introduction of Tukey reducibility for  general partial orderings, and in particular, for ultrafilters.
A pair $(D,\le)$ is a {\em partially ordered set} or  {\em partial ordering} if $D$ is a set and $\le$ is a relation which is 
reflexive, antisymmetric and transitive.
A partial order $\le_D$ on a set $D$ is {\em directed} if for any two members $d_1,d_2\in D$, there is another member $d_3\in D$ such that $d_3$ is above both $d_1$ and $d_2$; that is, $d_3\ge_D d_1$ and $d_3\ge d_2$.
A subset $X\sse D$ of a partially ordered set $(D,\le_D)$ is {\em cofinal} in $D$ if for every $d\in D$ there is an $x\in X$ such that $d\le_D x$.
Let $(D,\le_D)$ and $(E,\le_E)$ be partial orderings.
We say that a function $f:E\ra D$ is {\em cofinal}  if the image of each cofinal subset of $E$ is cofinal in $D$;
that is, for each cofinal subset $Y\sse E$, its $f$-image, which we denote $f'' Y:=\{f(e):e\in Y\}$, is a cofinal subset of $D$.
\begin{defn}\label{def.Tukeyred}
We say that a partial ordering $(D,\le_D)$ is {\em Tukey reducible} to a partial ordering  $(E,\le_E)$, and write $D\le_T E$, if there is a cofinal map from $(E,\le_E)$ to $(D,\le_D)$.
\end{defn}

An equivalent formulation of Tukey reducibility was noticed by Schmidt in \cite{Schmidt55}.
A subset $X\sse D$ of a partially ordered set $(D,\le_D)$ is called {\em unbounded} if there is no single $d\in D$ which simultaneously bounds every member of $X$; that is 
for each
 $d\in D$, there is some $x\in X$  such that $d\not\ge_D x$.
A map
$g:D\ra E$ is called a {\em Tukey map} or an {\em unbounded map} if the $g$-image of each unbounded subset of $D$ is an unbounded subset of $E$; that is, whenever $X\sse D$ is unbounded in $(D,\le_D)$, then $d'' X$ is unbounded in $(E,\le_E)$.
\begin{fact}[Schmidt, \cite{Schmidt55}]
For partially ordered sets $(D,\le_D)$ and $(E\le_E)$, 
$E\ge_T D$ if and only if there is a Tukey map from $D$ into $E$.
\end{fact}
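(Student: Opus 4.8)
The plan is to prove both implications directly, extracting in each direction the ``threshold'' reformulation of the relevant property and then arguing by contradiction. Throughout we may assume $D\neq\emptyset$, since otherwise the statement is vacuous, and we use the axiom of choice to assemble the required maps pointwise.

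For the forward direction, suppose $f:E\ra D$ witnesses $E\ge_T D$, i.e.\ $f$ is a cofinal map. The key first observation is that for each fixed $d\in D$ the set $A_d:=\{e\in E: f(e)\not\ge_D d\}$ fails to be cofinal in $E$: were $A_d$ cofinal, then $f''A_d$ would be cofinal in $D$ by hypothesis, so some $e\in A_d$ would satisfy $f(e)\ge_D d$, contradicting the definition of $A_d$. Since $A_d$ is not cofinal in $E$, we may fix $e_d\in E$ such that no member of $A_d$ is $\ge_E e_d$; equivalently, every $e\ge_E e_d$ satisfies $f(e)\ge_D d$. Define $g:D\ra E$ by $g(d)=e_d$. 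To see $g$ is a Tukey map, let $X\sse D$ be unbounded and suppose toward a contradiction that some $e^*\in E$ is an upper bound of $g''X$. Then $e^*\ge_E e_d$ for every $d\in X$, so $f(e^*)\ge_D d$ for every $d\in X$; thus $f(e^*)$ bounds $X$ in $D$, contradicting the unboundedness of $X$.

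For the reverse direction, suppose $g:D\ra E$ is a Tukey map. Dually, for each $e\in E$ the set $B_e:=\{d\in D: g(d)\le_E e\}$ is bounded in $D$: otherwise $B_e$ would be unbounded, whence $g''B_e$ would be unbounded in $E$, yet $e$ is an upper bound of $g''B_e$ by construction. Choose $f(e)\in D$ to be an upper bound of $B_e$ (taking $f(e)$ arbitrary when $B_e=\emptyset$). To see $f$ is a cofinal map, let $Y\sse E$ be cofinal in $E$ and let $d\in D$; since $Y$ is cofinal there is $y\in Y$ with $y\ge_E g(d)$, so $d\in B_y$, and therefore $f(y)\ge_D d$ with $f(y)\in f''Y$. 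Hence $f''Y$ is cofinal in $D$.

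Both directions are essentially routine once the two dual observations are isolated, and I do not expect a genuine obstacle here; the only point requiring a moment of care---really the crux of the argument---is recognizing that cofinality of $f$ is equivalent to the statement ``for every $d\in D$ there is a threshold $e_d$ above which $f$ remains $\ge_D d$'', and symmetrically that the Tukey property of $g$ amounts to ``for every $e\in E$ the preimage under $g$ of the down-set of $e$ is bounded in $D$''. The remaining items to keep track of are merely the trivial edge cases (empty $D$, or $B_e=\emptyset$) and the appeal to choice in defining $g$ and $f$.
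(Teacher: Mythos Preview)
Your proof is correct and is the standard argument for this classical equivalence. Note that the paper itself does not supply a proof of this fact; it is stated with attribution to Schmidt \cite{Schmidt55} and left unproved, so there is no proof in the paper to compare against.
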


If both $D\le_T E$ and $E\le_T D$, then we write $D\equiv_T E$ and say that $D$ and $E$ are Tukey equivalent.
The relation
$\equiv_T$ is an equivalence relation, and $\le_T$ on the equivalence classes forms a partial ordering. 
The equivalence classes are called {\em Tukey types}, and  are themselves partially ordered by the  Tukey reduction $\le_T$.

For directed partial orders, the notion of Tukey equivalence coincides with the notion of cofinal similarity.
Two partial orders $D$ and $E$ are {\em cofinally similar} if there is another partial order into which they both embed as cofinal subsets. 
It turns out that two directed partial orders are Tukey equivalent if and only if they are cofinally similar (\cite{Tukey40}; see also \cite{TodorcevicDirSets85}).
This provides an intuitive way of thinking about Tukey equivalence for directed partial orders.
(See \cite{TodorcevicDirSets85} for further details.)

Tukey reducibility captures  important aspects of convergent functions and of the structure  of  neighborhood bases in  topological spaces.
In the theory of convergence in general topology, given a net, the domain of any subnet is a cofinal (hence, Tukey equivalent) subset of the domain of the net.
Moreover, Tukey reducibility distills  essential properties of continuity, as is shown in the following 
 Theorem 2.1 in \cite{Milovich08}.
Suppose $X$ and $Y$ are topological spaces, $p\in X$, $q\in Y$, $\mathcal{A}$ is a local base at $p$ in $X$, $\mathcal{B}$ is a local base at $q$ in $Y$, $f:X\ra Y$ is continuous and open (or just continuous at $p$ and open at $q$),
and $f(p)=q$.
Then $(\mathcal{B},\contains)\le_T(\mathcal{A},\contains)$.
It follows that if $f$ is a homeomorphism, then every local base at $p$ is Tukey equivalent to a local base at $q$.

The study of  Tukey types of partial orderings often reveals useful information for the comparison of different partial orderings.
For example, Tukey reducibility downward preserves calibre-like properties, such as the countable chain condition, property K, precalibre $\aleph_1$, $\sigma$-linked, and $\sigma$-centered.
It also preserves the cardinal characteristics of cofinality and additivity:
If $D\le_T E$, then $\cf(D)\le \cf(E)$ and $\add(D)\ge \add(E)$.
 (See \cite{Todorcevic96} for more  on that subject.)

Satisfactory classification theories of Tukey types have been developed for several classes of ordered sets.
The  cofinal types of countable directed partial orders are $1$ and $\om$ (see \cite{Tukey40}).
Day found a classification of
countable partially ordered sets in \cite{Day44} in terms of direct sums of three families of partially ordered sets.
For partial orders with size of the first uncountable cardinal, $\aleph_1$, the usual  axioms of set theory,  Zermelo-Fraenkel Axioms plus the Axiom of Choice (ZFC), are not sufficient to determine the Tukey structure.
The picture 
actually depends on what axioms one assumes in addition to ZFC.
Tukey showed in \cite{Tukey40} that the directed partial orders $1$, $\om$, $\om_1$, $\om\times\om_1$, and $[\om_1]^{<\om}$ are all cofinally distinct; that is, no two of them are Tukey equivalent.
He then asked whether there are more cofinal types of directed partial orders of cardinality $\aleph_1$.
In \cite{TodorcevicDirSets85}, \Todorcevic\  showed that there exist $2^{\aleph_1}$ Tukey inequivalent directed sets of cardinality continuum. So under CH, the answer to Tukey's question is that there exist $2^{\aleph_1}$ diferent cofinal types of directed sets of cardinality at most $\aleph_1$.
Assuming the Proper Forcing Axiom (PFA), (which is a sort of generalization of the Baire Category Theorem to more general partial orders), \Todorcevic\ in \cite{TodorcevicDirSets85} 
 classified the Tukey types of all 
directed partial orders of cardinality $\aleph_1$  by showing that there are exactly five cofinal types, namely the ones listed above.

In \cite{Todorcevic96}, under the assumption of PFA, \Todorcevic\ classified the Tukey types of all partially ordered sets of size $\aleph_1$
in terms of a countable basis constructed from five forms of partial orderings.
On the other hand, \Todorcevic\  also showed in \cite{Todorcevic96} that there are at least $2^{\aleph_1}$ many Tukey incomparable  separative $\sigma$-centered partial orderings of size continuum, the cardinality of the real numbers.
Thus,  no  satisfactory classification theory via Tukey reducibility of all partial orderings of size continuum is possible.

However, 
when we restrict to a particular class of partial orderings of size continuum, Tukey reducibility
can still yield useful information, especially in settings  where isomorphism is too strong for any structure to be found.  
One of the first of these lines of investigation is 
 in the paper \cite{Fremlin91} of  Fremlin who considered Tukey structure on partially ordered sets occurring in analysis.
After this, several papers appeared dealing with different classes of posets such as, for example, the paper \cite{Solecki/Todorcevic04} of Solecki and \Todorcevic\ which makes
a  systematic study of the  structure of the Tukey types of topological directed sets.

In recent years, much focus has been on the Tukey structure of ultrafilters.
The {\em power set} of the natural numbers, denoted $\mathcal{P}(\om)$, is the collection of all subsets of $\om$.
\begin{defn}\label{def.uf}
A {\em filter} on base set $\om$ is a set $\mathcal{F}\sse\mathcal{P}(\om)$ such that 
$\mathcal{F}$ is 
\begin{enumerate}
\item 
 not empty: $\om\in\mathcal{F}$;
\item
closed under intersections: whenever $X,Y\in\mathcal{F}$, then $X\cap Y\in\mathcal{F}$; and
\item 
closed upwards: whenever $X\in\mathcal{F}$, $Y\sse \om$ and $Y\contains X$,
then $Y\in\mathcal{F}$.
\end{enumerate} 
A filter $\mathcal{F}$ is {\em proper} if $\emptyset\not\in\mathcal{F}$, and is {\em nonprincipal} if it contains no singletons.
A set $\mathcal{U}\sse\mathcal{P}(\om)$ is an {\em ultrafilter } if $\mathcal{U}$ is a proper filter with the additional property that
for each $X\sse\om$, either $X\in \mathcal{U}$ or $\om\setminus X\in\mathcal{U}$. 
\end{defn}

Each ultrafilter on a countable base set has size continuum, the cardinality of the real numbers, denoted by $\mathfrak{c}$.
Except when necessary, we shall assume the countable base for our ultrafilters to be the set natural numbers $\{0,1,2,\dots\}$, denoted $\om$.
An ultrafilter $\mathcal{U}$ on $\om$ can be considered partially ordered by reverse inclusion $\contains$, or by reverse almost inclusion $\contains^*$.  In fact, $(\mathcal{U},\contains)$ is a directed partial ordering, since every ultrafilter is closed under finite intersections.
Likewise,  $(\mathcal{U},\contains)$ is a directed partial ordering.
The directed partial order $([\mathfrak{c}]^{<\om},\sse)$ is the Tukey maximum among all directed partial orders of size continuum.
That is, for every directed partial order $(D,\le_D)$ with $D$ having cardinality continuum, it is in fact true that $(D,\le_D)$ is Tukey reducible to $([\mathfrak{c}]^{<\om},\sse)$.

The study of Tukey reducibility on ultrafilters on countable base sets began with  \cite{Isbell65}, where it was shown that there is an ultrafilter which achieves the maximal Tukey type.

\begin{thm}[Isbell, Theorem 5.4 in \cite{Isbell65}]\label{thm.3}
There is an ultrafilter $\mathcal{U}_{\mathrm{top}}$ on $\om$ realizing the maximal cofinal type among all directed sets of cardinality continuum, i.e.\ $(\mathcal{U}_{\mathrm{top}},\contains)\equiv_T([\mathfrak{c}]^{<\om},\sse)$.
\end{thm}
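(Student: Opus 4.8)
The plan is as follows. One half of the claimed equivalence is free: every ultrafilter on $\om$ has cardinality $\mathfrak{c}$, and $([\mathfrak{c}]^{<\om},\sse)$ is the Tukey-largest directed partial order of size continuum (as recalled above), so $(\mathcal{U},\contains)\le_T([\mathfrak{c}]^{<\om},\sse)$ holds for \emph{every} ultrafilter $\mathcal{U}$ on $\om$. Hence it suffices to build a single ultrafilter $\mathcal{U}$ on $\om$ with $([\mathfrak{c}]^{<\om},\sse)\le_T(\mathcal{U},\contains)$; by Schmidt's Fact this amounts to exhibiting a Tukey (unbounded) map from $([\mathfrak{c}]^{<\om},\sse)$ into $(\mathcal{U},\contains)$.

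First I would record a combinatorial sufficient condition for that reduction: \emph{if $\{A_\al:\al<\mathfrak{c}\}\sse\mathcal{U}$ is a family such that $\bcap_{\al\in I}A_\al\notin\mathcal{U}$ for every countably infinite $I\sse\mathfrak{c}$, then $([\mathfrak{c}]^{<\om},\sse)\le_T(\mathcal{U},\contains)$.} The witnessing map is $g(F)=\bcap_{\al\in F}A_\al$, which lands in $\mathcal{U}$ because $F$ is finite and $\mathcal{U}$ is a filter. A set $X\sse[\mathfrak{c}]^{<\om}$ is unbounded precisely when $\bcup X$ is infinite; if some $B\in\mathcal{U}$ bounded $g''X$ in $(\mathcal{U},\contains)$, i.e.\ $B\sse g(F)$ for all $F\in X$, then $B\sse A_\al$ for every $\al\in\bcup X$, and choosing a countably infinite $I\sse\bcup X$ we would get $\bcap_{\al\in I}A_\al\contains B$, hence $\bcap_{\al\in I}A_\al\in\mathcal{U}$ --- contrary to hypothesis. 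So $g''X$ is unbounded, as required. (This condition is in fact also necessary: for any Tukey map $g\colon[\mathfrak{c}]^{<\om}\to\mathcal{U}$ the family $A_\al:=g(\{\al\})$ works, since $\{\{\al\}:\al\in I\}$ is unbounded for infinite $I$. Only sufficiency is needed here.)

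It remains to produce $\mathcal{U}$ carrying such a family. Fix an independent family $\{A_\al:\al<\mathfrak{c}\}$ on $\om$ of size continuum --- for instance, by the Hewitt--Marczewski--Pondiczery theorem $2^{\mathfrak{c}}$ has a countable dense subset $D$, and (after identifying $D$ with $\om$) the sets $A_\al=\{x\in D:x(\al)=1\}$ form such a family. I claim $\{A_\al:\al<\mathfrak{c}\}\cup\{\,\om\setminus\bcap_{\al\in I}A_\al : I\in[\mathfrak{c}]^{\aleph_0}\,\}$ has the finite intersection property, so it generates a proper filter $\mathcal{F}$. A finite intersection from this collection has the form $\bcap_{i<m}A_{\al_i}\cap\bcap_{j<k}\bcup_{\al\in I_j}(\om\setminus A_\al)$ with $\al_0,\dots,\al_{m-1}$ distinct and each $I_j$ countably infinite. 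Choosing greedily distinct ordinals $\be_j\in I_j\setminus\{\al_0,\dots,\al_{m-1}\}$ (possible since each $I_j$ is infinite), the set $\bcap_{i<m}A_{\al_i}\cap\bcap_{j<k}(\om\setminus A_{\be_j})$ is contained in that intersection (as $\om\setminus A_{\be_j}\sse\bcup_{\al\in I_j}(\om\setminus A_\al)$) and is nonempty by independence of $\{A_\al:\al<\mathfrak{c}\}$. Now let $\mathcal{U}_{\mathrm{top}}$ be any ultrafilter extending $\mathcal{F}$. Each $A_\al\in\mathcal{U}_{\mathrm{top}}$, and for each $I\in[\mathfrak{c}]^{\aleph_0}$ we have $\om\setminus\bcap_{\al\in I}A_\al\in\mathcal{U}_{\mathrm{top}}$, so $\bcap_{\al\in I}A_\al\notin\mathcal{U}_{\mathrm{top}}$; since any infinite $I\sse\mathfrak{c}$ contains a countably infinite one, $\bcap_{\al\in I}A_\al\notin\mathcal{U}_{\mathrm{top}}$ for \emph{all} infinite $I$. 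By the second paragraph $([\mathfrak{c}]^{<\om},\sse)\le_T(\mathcal{U}_{\mathrm{top}},\contains)$, and together with the first paragraph, $(\mathcal{U}_{\mathrm{top}},\contains)\equiv_T([\mathfrak{c}]^{<\om},\sse)$. (This $\mathcal{U}_{\mathrm{top}}$ is automatically nonprincipal, since a principal ultrafilter has a $\contains$-greatest element and hence Tukey type $1$, whereas $[\mathfrak{c}]^{<\om}$ does not.)

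I expect the finite-intersection-property verification to be the main obstacle. The delicate point is that we must remove the pseudo-intersection $\bcap_{\al\in I}A_\al$ from the filter for \emph{all} countably infinite $I$ simultaneously, and these are infinitary Boolean combinations not directly controlled by ordinary independence; what saves the day is that an infinite index set $I_j$ always leaves room for a single witness $\be_j\in I_j$ avoiding the finitely many indices already in play, which collapses each infinite negated block to a single negated generator. This is also why one cannot merely take an arbitrary ultrafilter extending an independent family: the pseudo-intersections have to be actively forced out, and the extra generators $\om\setminus\bcap_{\al\in I}A_\al$ are exactly what does that.
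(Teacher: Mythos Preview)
Your proof is correct. The paper is a survey and does not actually prove this theorem---it simply cites Isbell's original paper and traces the idea back to \Pospisil---so there is no ``paper's own proof'' to compare against in detail. That said, your approach is precisely the standard one, and it dovetails with what the paper records later: the sufficient (and necessary) condition you isolate in your second paragraph is exactly the combinatorial characterization stated around Fact~\ref{fact.Tukeytopchar}, and the independent-family construction (forcing out all countable intersections) is the classical argument the paper alludes to via the \Pospisil\ reference. One tiny remark: when you pick the $\beta_j$'s ``greedily distinct,'' make explicit that they are chosen distinct from each other as well as from the $\alpha_i$'s (you clearly intend this, and it is trivially possible since each $I_j$ is infinite and only finitely many indices are excluded at each step), since independence only guarantees nonemptiness of Boolean combinations with pairwise distinct indices.
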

One can actually trace back the seed of the idea of an  ultrafilter with maximum Tukey type  to work of \Pospisil\ in \cite{Pospisil39}
(see also \cite{Juhasz66} and  \cite{Juhasz67}).

One of the main guiding forces in the study of Tukey types of ultrafilters is the following problem of Isbell.

\begin{problem}[Isbell, Problem 2 in \cite{Isbell65}]\label{prob.Isbell}
It there an ultrafilter $\mathcal{U}$ on $\om$ such that $(\mathcal{U},\contains)<_T([\mathfrak{c}]^{<\om},\sse)$?
\end{problem}

Interest in Isbell's Problem  problem and more generally in the study of Tukey types of ultrafilters was  revived
by work of Milovich in  \cite{Milovich08}.  In that paper, he showed that, using the combinatorial principle $\lozenge$ in addition to ZFC, one can construct a nonprincipal ultrafilter strictly below the Tukey maximum.
A few years later, Dobrinen and \Todorcevic\  showed  in \cite{Dobrinen/Todorcevic11} that  every ultrafilter which is a p-point has Tukey type strictly below the maximum type, thus reducing the extra axioms needed to produce a non-top ultrafilter.
In the past several years,  
there has been a a large body of research on  ultrafilters which have Tukey type  below the maximum,
and the major  results will be discussed in this survey.
However, it is still unknown whether it is consistent with ZFC that every non-principal ultrafilter has the maximum Tukey type.
The following is what is currently left open of Isbell's Problem.

\begin{problem}\label{prob.Isbellleft}
Is it consistent with ZFC that every nonprincipal ultrafilter has maximum Tukey type?
\end{problem}

Any model of ZFC in which there is only one Tukey type of nonprincipal ultrafilter on $\om$ must be a model in which there are no p-points,
as will be shown in Section \ref{sec.<top}.

Strongly connected with this problem is the following.

\begin{problem}\label{prob.nottop}
What properties suffice to guarantee that an ultrafilter has Tukey type strictly below the maximum?
\end{problem}

Answers to Problem \ref{prob.nottop} are found throughout this survey, though more work is still needed.

The second main guiding force for the study of Tukey types of ultrafilters is its connection with the notion of Rudin-Keisler reducibility on ultrafilters, which is now reviewed.
\begin{defn}\label{defn.RK}
Let $\mathcal{U}$ and $\mathcal{V}$ be ultrafilters on $\om$.
For a function $f:\om\ra\om$, define $f(\mathcal{U})=\{X\sse\om:f^{-1}(X)\in\mathcal{U}\}$.
We say that $\mathcal{V}$ is {\em Rudin-Keisler reducible} to $\mathcal{U}$,
denoted $\mathcal{V}\le_{RK}\mathcal{U}$, if there is a function $f:\om\ra\om$ such that $f(\mathcal{V})=\mathcal{U}$.
\end{defn}
We say that two ultrafilters $\mathcal{U}$ and $\mathcal{V}$ are Rudin-Keisler equivalent, denoted $\mathcal{U}\equiv_{RK}\mathcal{V}$, if and only if they are both Rudin-Keisler reducible to each other.
It is well-known that two ultrafilters are Rudin-Keisler equivalent if and only if they are isomorphic, meaning that there is a bijection $f:\om\ra\om$ such that $\mathcal{U}=f(\mathcal{V})$.
This justifies using the terminology of {\em Rudin-Keisler type} and {\em isomorphism class} interchangeably.

\begin{fact}\label{fact.TRK}
Let $\mathcal{U}$ and $\mathcal{V}$ be ultrafilters on $\om$.
If $\mathcal{U}\ge_{RK} \mathcal{V}$, then $\mathcal{U}\ge_T\mathcal{V}$.
\end{fact}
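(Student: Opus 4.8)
The plan is to exhibit an explicit cofinal map from $(\mathcal{U},\contains)$ to $(\mathcal{V},\contains)$. Fix a function $f:\om\ra\om$ witnessing $\mathcal{V}\le_{RK}\mathcal{U}$, so that $f(\mathcal{U})=\mathcal{V}$; that is, for every $X\sse\om$ we have $X\in\mathcal{V}$ if and only if $f^{-1}(X)\in\mathcal{U}$. Define $g:\mathcal{U}\ra\mathcal{V}$ by $g(A)=f''A=\{f(a):a\in A\}$. The first thing I would check is that $g$ is well-defined, i.e.\ that $g(A)\in\mathcal{V}$ whenever $A\in\mathcal{U}$: since $A\sse f^{-1}(f''A)$ and $\mathcal{U}$ is closed upwards, $f^{-1}(f''A)\in\mathcal{U}$, and so by the defining property of $f$ we get $f''A\in\mathcal{V}$.

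Next I would verify directly that $g$ is a cofinal map. Let $Y\sse\mathcal{U}$ be cofinal in $(\mathcal{U},\contains)$, and let $B\in\mathcal{V}$ be arbitrary; we must find $A\in Y$ with $g(A)\contains B$, i.e.\ $f''A\sse B$. Since $B\in\mathcal{V}$, the set $f^{-1}(B)$ belongs to $\mathcal{U}$, so by cofinality of $Y$ there is some $A\in Y$ with $A\sse f^{-1}(B)$; then $g(A)=f''A\sse f''f^{-1}(B)\sse B$, as required. Hence $g''Y$ is cofinal in $(\mathcal{V},\contains)$, and $g$ witnesses $\mathcal{V}\le_T\mathcal{U}$. (Equivalently, one may note that $g$ is monotone with respect to $\contains$ and that its range is cofinal in $(\mathcal{V},\contains)$ — since $g(f^{-1}(B))\sse B$ for every $B\in\mathcal{V}$ — and invoke the standard fact that a monotone map with cofinal range is automatically cofinal.)

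There is essentially no obstacle here; the one point that requires a moment's care is keeping straight that the ordering on an ultrafilter is \emph{reverse} inclusion, so that being cofinal means being eventually contained in every element, and correspondingly confirming that $g=f''$ really does take values in $\mathcal{V}$. The statement is the easy half of the relationship between Rudin--Keisler and Tukey reducibility, and the argument above is all that is needed.
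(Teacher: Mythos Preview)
Your proof is correct and is essentially the same as the paper's: both define the map $A\mapsto f''A$ from $\mathcal{U}$ to $\mathcal{V}$ and observe that it is cofinal. The paper merely asserts cofinality, whereas you have supplied the (easy) verification of well-definedness and cofinality.
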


\begin{proof}
Take a function $h:\om\ra\om$ satisfying $\mathcal{V}=h(\mathcal{U}):=\{X\sse\om:h^{-1}(X)\in\mathcal{U}\}$.  
Define $f:\mathcal{U}\ra\mathcal{V}$ by $f(X)=\{h(n):n\in X\}$, for each $X\in\mathcal{U}$.
Then $f$ is a cofinal map. 
\end{proof}

Thus,
 Tukey reducibility is a natural weakening of Rudin-Keisler reducibility, and consequently, Tukey equivalence classes
 (also called Tukey types) are a coarsening of isomorphism classes.

The Rudin-Keisler order on ultrafilters has been well-studied for several decades.  This line of research has proved useful in several areas of mathematics.
Recall that  the  points in the Stone-\v{C}ech compactification of the natural numbers, denoted $\beta \om$, can be identified with collection of ultrafilters on $\om$.;
the Stone-\v{C}ech remainder $\beta\om\setminus\om$ can be identified with the collection of nonprincipal ultrafilters on $\om$.
Thus,
Rudin-Keisler reducibility for ultrafilters on $\om$ provides a means for classifying  the points of $\beta\om$ and  $\beta\om\setminus\om$.
Rudin-Keisler reducibility also has important  model-theoretic  implications.
If  $\mathcal{U}\ge_{RK}\mathcal{V}$, then the ultraproduct (of any structure) by $\mathcal{V}$ elementarily embeds into the ultraproduct by $\mathcal{U}$.
As the Rudin-Keisler order on ultrafilters has had such an impact in these and other areas of mathematics, and as Tukey reducibility is a natural weakening of it, 
the following natural question takes on importance.

\begin{problem}\label{prob.TvsRK}
How different are Tukey and Rudin-Keisler reducibility?
\end{problem}

Problems \ref{prob.Isbellleft},
\ref{prob.nottop}
and \ref{prob.TvsRK}
are the driving forces behind all current investigations of the structure of the Tukey types of ultrafilters.
This will be seen throughout this article.


\section{Definitions, facts, and some special types of ultrafilters}\label{sec.thebasics}

This section begins the survey  by recounting basic definitions and some easy but very useful facts about Tukey reductions.
Their proofs can be found in Section 2 of \cite{Dobrinen/Todorcevic11}.

\begin{fact}\label{fact.cofinalTukeysame}
If $C$ is a cofinal subset of a partial ordering $(P,\le)$,
then $(C,\le)\equiv_T(P,\le)$.
\end{fact}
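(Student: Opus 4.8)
The plan is to verify the two directions of Tukey equivalence directly from the definitions, producing explicit cofinal maps in each direction. Since $C\sse P$ is cofinal and $\le$ restricted to $C$ is just the ambient order, the natural candidates are the inclusion map $C\hookrightarrow P$ and a ``retraction'' sending each $p\in P$ to some element of $C$ above it.

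First I would show $(C,\le)\le_T(P,\le)$. For this I need a cofinal map $f:P\ra C$. Using cofinality of $C$ in $P$, for each $p\in P$ fix (via the Axiom of Choice) an element $f(p)\in C$ with $p\le f(p)$. I claim $f$ is cofinal: given a cofinal $Y\sse P$, I must show $f''Y$ is cofinal in $C$. Take any $c\in C$; since $c\in P$ and $Y$ is cofinal in $P$, there is $y\in Y$ with $c\le y$, and then $c\le y\le f(y)$, so $c$ is below the member $f(y)$ of $f''Y$. Hence $f''Y$ is cofinal in $C$, establishing this direction.

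Next I would show $(P,\le)\le_T(C,\le)$ via the inclusion map $g:C\ra P$, $g(c)=c$. If $Z\sse C$ is cofinal in $C$, I must check $g''Z=Z$ is cofinal in $P$. Given $p\in P$, cofinality of $C$ yields $c\in C$ with $p\le c$, and then cofinality of $Z$ in $C$ yields $z\in Z$ with $c\le z$; by transitivity $p\le z$, so $Z$ is cofinal in $P$. Thus $g$ is cofinal, giving the reverse reduction, and combining the two directions yields $(C,\le)\equiv_T(P,\le)$.

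I do not expect a genuine obstacle here: the statement is essentially immediate from the definition of cofinal map, and the only subtlety worth flagging is the appeal to the Axiom of Choice in selecting $f(p)$ for each $p$, which is unproblematic in the ZFC setting of the paper. The one point to double-check is that transitivity of $\le$ is used freely in chaining the inequalities $p\le c\le z$ and $c\le y\le f(y)$, which is valid since $\le$ is by hypothesis a partial order.
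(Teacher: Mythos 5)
Your proof is correct, and it is the standard argument (inclusion in one direction, a choice-of-upper-bound retraction in the other) that the paper itself leaves to Section 2 of the cited Dobrinen--\Todorcevic\ reference rather than proving in-line. Nothing is missing; the only care needed is exactly the point you flagged, namely that a cofinal map into $C$ requires choosing some $f(p)\in C$ above each $p$, which is unproblematic here.
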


\begin{defn}\label{defn.monotonemap}
Let $(P,\le_P)$ and $(Q,\le_Q)$ be partial orderings.
A  map $f:P\ra Q$ is {\em monotone}
if whenever $p,r$ are in $P$ and $p\le_P r$, then $f(p)\le_Q f(r)$.
For the special case of ultrafilters $\mathcal{U},\mathcal{V}$, this translates to the following:
A map $f:\mathcal{U}\ra\mathcal{V}$ is {\em monotone} if 
whenever $W,X\in\mathcal{U}$ and $W\contains X$, then $f(W)\contains f(X)$.  
\end{defn}

\begin{fact}\label{fact.monotonemap}
Let $(P,\le_P)$ and $(Q,\le_Q)$ be partial orderings.
A monotone map $f:P\ra Q$ is a cofinal map if and only if its image $f''P$ is a cofinal subset of $Q$.
\end{fact}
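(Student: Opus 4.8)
The plan is to prove the two implications separately, with the forward direction requiring nothing beyond the definitions and the reverse direction being where monotonicity does its work.

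For the forward direction, suppose $f:P\ra Q$ is a cofinal map. Then by definition the $f$-image of every cofinal subset of $P$ is cofinal in $Q$. Since $P$ is trivially a cofinal subset of itself, we immediately conclude that $f''P$ is cofinal in $Q$. Monotonicity plays no role in this half.

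For the reverse direction, suppose $f$ is monotone and $f''P$ is cofinal in $Q$; the goal is to show that $f$ is a cofinal map. Let $X\sse P$ be an arbitrary cofinal subset of $P$, and fix any $q\in Q$. Since $f''P$ is cofinal in $Q$, choose $p\in P$ with $q\le_Q f(p)$. Since $X$ is cofinal in $P$, choose $x\in X$ with $p\le_P x$. Monotonicity of $f$ gives $f(p)\le_Q f(x)$, and transitivity of $\le_Q$ then yields $q\le_Q f(x)$. As $q\in Q$ was arbitrary, $f''X$ is cofinal in $Q$; and as $X$ was an arbitrary cofinal subset of $P$, the map $f$ is cofinal.

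There is no genuine obstacle here: the entire content is the two-step chase $q\le_Q f(p)\le_Q f(x)$ in the reverse direction, which combines cofinality of the image (to locate $p$) with cofinality of $X$ (to locate $x$ above $p$), glued together by monotonicity of $f$. The only minor point worth recording is the degenerate case $P=\emptyset$, in which the hypothesis $f''P$ cofinal in $Q$ forces $Q=\emptyset$ and the statement is vacuous.
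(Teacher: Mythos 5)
Your proof is correct, and it is the standard argument (the paper itself defers the proof to Section 2 of \cite{Dobrinen/Todorcevic11}, where essentially this same two-step chase using monotonicity is carried out). Nothing is missing; the degenerate-case remark is a nice, if unnecessary, touch.
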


Recall that an ultrafilter $\mathcal{U}$ is partially ordered by $\contains$, and moreover,  $(\mathcal{U},\contains)$ is a directed partial ordering.
The next fact shows that, for ultrafilters, we only need consider monotone cofinal maps.

\begin{fact}\label{fact.monotone}
Let  $\mathcal{U}$ and $\mathcal{V}$ be ultrafilters.
If $\mathcal{U}\ge_T\mathcal{V}$, then this is witnessed by a monotone cofinal map. 
\end{fact}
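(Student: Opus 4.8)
The plan is \emph{not} to monotonize a given cofinal map $f\colon\mathcal{U}\to\mathcal{V}$ directly: the obvious candidate $B\mapsto\bigcap\{f(Y):Y\in\mathcal{U},\ Y\contains B\}$ is monotone, but its values need not lie in $\mathcal{V}$ (an intersection of $\mathfrak{c}$-many members of $\mathcal{V}$ can be empty, and one can build cofinal $f$ for which this happens). Instead I would construct the monotone cofinal map out of an \emph{unbounded} map pointing the other way. First record an elementary observation: for any filter $\mathcal{W}$ on $\om$ and any $\mathcal{S}\sse\mathcal{W}$, the family $\mathcal{S}$ is bounded in $(\mathcal{W},\contains)$ if and only if $\bigcap\mathcal{S}\in\mathcal{W}$, since $\bigcap\mathcal{S}$ bounds $\mathcal{S}$ whenever it lies in $\mathcal{W}$, and conversely any bound is contained in every member of $\mathcal{S}$, hence in $\bigcap\mathcal{S}$, which is then in $\mathcal{W}$ by upward closure. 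Now, since $\mathcal{U}\ge_T\mathcal{V}$, Schmidt's equivalent formulation of Tukey reducibility (the Fact above) yields an unbounded (Tukey) map $\phi\colon\mathcal{V}\to\mathcal{U}$, and I define $g\colon\mathcal{U}\to\mathcal{V}$ by
\[
g(B)=\bigcap\{A\in\mathcal{V}:\phi(A)\contains B\},
\]
with the convention $\bigcap\emptyset=\om$.

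The crux is that $g(B)\in\mathcal{V}$ for every $B\in\mathcal{U}$. Write $\mathcal{S}_B:=\{A\in\mathcal{V}:\phi(A)\contains B\}$. If $\mathcal{S}_B=\emptyset$ then $g(B)=\om\in\mathcal{V}$. Otherwise, suppose toward a contradiction that $g(B)=\bigcap\mathcal{S}_B\notin\mathcal{V}$. By the observation, $\mathcal{S}_B$ is then unbounded in $(\mathcal{V},\contains)$, so by unboundedness of $\phi$ its image $\phi''\mathcal{S}_B$ is unbounded in $(\mathcal{U},\contains)$. But every member of $\phi''\mathcal{S}_B$ contains $B$, so $B$ itself bounds $\phi''\mathcal{S}_B$ in $(\mathcal{U},\contains)$ --- a contradiction. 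Hence $g$ is a genuine function from $\mathcal{U}$ into $\mathcal{V}$.

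It remains to check monotonicity and cofinality of the image, after which Fact~\ref{fact.monotonemap} finishes the proof. Monotonicity is immediate: if $W\contains X$ in $\mathcal{U}$ then $\mathcal{S}_W\sse\mathcal{S}_X$, so $g(W)=\bigcap\mathcal{S}_W\contains\bigcap\mathcal{S}_X=g(X)$. For cofinality of the image, given $C\in\mathcal{V}$ put $B:=\phi(C)\in\mathcal{U}$; then $C\in\mathcal{S}_B$ because $\phi(C)=B\contains B$, so $g(B)\sse C$. Thus $g''\mathcal{U}$ is cofinal in $(\mathcal{V},\contains)$, and by Fact~\ref{fact.monotonemap} the monotone map $g$ is cofinal, witnessing $\mathcal{U}\ge_T\mathcal{V}$. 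The one real obstacle is the well-definedness step: arranging that a monotone map actually takes values in $\mathcal{V}$. Routing through the unbounded map $\phi$ is exactly what forces all the sets intersected in the definition of $g(B)$ to share the common lower bound $B\in\mathcal{U}$, which is precisely the hypothesis that pushes $g(B)$ back into $\mathcal{V}$; everything else is routine translation between the orders $(\mathcal{U},\contains)$ and $(\mathcal{V},\contains)$ and the notions of bounded, unbounded, and cofinal.
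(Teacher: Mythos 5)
Your proof is correct: routing through Schmidt's unbounded map $\phi:\mathcal{V}\to\mathcal{U}$ and setting $g(B)=\bigcap\{A\in\mathcal{V}:\phi(A)\contains B\}$, then checking well-definedness via the ``bounded iff intersection in the filter'' observation, is exactly the standard argument, and it is essentially the proof the survey defers to (Section 2 of the cited Dobrinen--Todor\v{c}evi\'{c} paper). Your opening remark that the naive monotonization of a given cofinal map need not take values in $\mathcal{V}$ is also accurate and correctly identifies why the detour through the Tukey map is the right move.
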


A standard method of building new ultrafilters from old ones is the Fubini product construction.
The construction can be iterated countably many times, each time producing another ultrafilter.

\begin{notation}\label{notation.cross}
Let $\mathcal{U}$, $\mathcal{V}$, and $\mathcal{U}_n$ ($n<\om$) be ultrafilters.
We define the notation for the following ultrafilters. 
\begin{enumerate}
\item
The {\em Fubini product} of the $\mathcal{U}_n$ over $\mathcal{U}$ is  
$$\lim_{n\ra\mathcal{U}}\mathcal{U}_n=\{A\sse\om\times\om:\{n\in\om:\{j\in\om:(n,j)\in A\}\in\mathcal{U}_n\}\in\mathcal{U}\}.$$
\item
When all $\mathcal{U}_n$ are the same, say $\mathcal{V}$, then we denote $\lim_{n\ra\mathcal{U}}\mathcal{V}$ by $\mathcal{U}\cdot\mathcal{V}$.
In this case,
$$\mathcal{U}\cdot\mathcal{V}= \{A\sse\om\times\om:\{i\in\om:\{j\in\om:(i,j)\in A\}\in\mathcal{V}\}\in\mathcal{U}\}.$$
\end{enumerate}
\end{notation}

The cartesian product of ultrafilters forms again a directed partial ordering, though not an ultrafilter.
Let
$\mathcal{U}\times\mathcal{V}$ denote the ordinary cartesian product of $\mathcal{U}$ and $\mathcal{V}$ with the coordinate-wise ordering $\lgl\contains,\contains\rgl$;
and let
$\Pi_{n<\om}\mathcal{U}_n$ is the cartesian product of the $\mathcal{U}_n$ with its natural coordinate-wise product ordering.

Cartesian products of two ultrafilters produce the least upper bound of them in the Tukey ordering.
For Fubini products, the relationship is not so straightforward. 
In some instances it is known that 
the Fubini product of two ultrafilters $\mathcal{U}$ and $\mathcal{V}$ is the Tukey least upper bound of them, and this is pointed out below.  
It is not known whether this is always the case.
The following gives the basic facts on the Tukey  relationships between cartesian and Fubini products of ultrafilters.
Proofs can be found in Fact 7 in \cite{Dobrinen/Todorcevic11}.

\begin{fact}
Let $\mathcal{U},\mathcal{U}_0,\mathcal{U}_1,\mathcal{V},\mathcal{V}_0$, and $\mathcal{V}_1$ be ultrafilters.
\begin{enumerate}
\item
$\mathcal{U}\times\mathcal{U}\equiv_T\mathcal{U}$.
\item
$\mathcal{U}\times \mathcal{V}\ge_T\mathcal{U}$ and 
$\mathcal{U}\times \mathcal{V}\ge_T\mathcal{V}$.
\item
If $\mathcal{U}_1\ge_T \mathcal{U}_0$ and $\mathcal{V}_1\ge_T\mathcal{V}_0$, then $\mathcal{U}_1\times\mathcal{V}_1\ge_T\mathcal{U}_0\times\mathcal{V}_0$.
\item
If $\mathcal{W}\ge_T\mathcal{U}$ and $\mathcal{W}\ge_T\mathcal{V}$,
then $\mathcal{W}\ge_T\mathcal{U}\times\mathcal{V}$.
Thus, $\mathcal{U}\times\mathcal{V}$ is the minimal Tukey type which is Tukey greater than or equal to both $\mathcal{U}$ and $\mathcal{V}$.
\item
 $\mathcal{U}\cdot\mathcal{V}\ge_T \mathcal{U}$ and
 $\mathcal{U}\cdot\mathcal{V}\ge_T \mathcal{V}$,
 and therefore $\mathcal{U}\cdot\mathcal{V}\ge_T\mathcal{U}\times\mathcal{V}$. 
\end{enumerate}
\end{fact}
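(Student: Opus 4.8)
The plan is to verify all five items by exhibiting explicit monotone maps and appealing to Fact~\ref{fact.monotonemap}, which reduces producing a cofinal map to checking that a monotone map has cofinal image; for part~(5) I will instead route through Rudin--Keisler reducibility and invoke Fact~\ref{fact.TRK}. Throughout, recall that every ultrafilter, ordered by $\contains$, is directed, i.e.\ closed under finite intersections, and that a cofinal map goes \emph{from} the larger poset \emph{to} the smaller one.

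For~(1), the diagonal map $X\mapsto\lgl X,X\rgl$ from $\mathcal{U}$ to $\mathcal{U}\times\mathcal{U}$ is monotone, and its image is cofinal because any $\lgl A,B\rgl$ is dominated in the reverse-inclusion product order by $\lgl A\cap B,A\cap B\rgl$; this gives $\mathcal{U}\times\mathcal{U}\le_T\mathcal{U}$, and the reverse inequality is the instance $\mathcal{V}=\mathcal{U}$ of~(2). For~(2), the two coordinate projections $\lgl A,B\rgl\mapsto A$ and $\lgl A,B\rgl\mapsto B$ from $\mathcal{U}\times\mathcal{V}$ are monotone and onto, hence cofinal, yielding $\mathcal{U}\times\mathcal{V}\ge_T\mathcal{U}$ and $\mathcal{U}\times\mathcal{V}\ge_T\mathcal{V}$. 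For~(3), fix monotone cofinal maps $f\colon\mathcal{U}_1\to\mathcal{U}_0$ and $g\colon\mathcal{V}_1\to\mathcal{V}_0$ (available by Fact~\ref{fact.monotone}); then $\lgl A,B\rgl\mapsto\lgl f(A),g(B)\rgl$ is monotone from $\mathcal{U}_1\times\mathcal{V}_1$ to $\mathcal{U}_0\times\mathcal{V}_0$ and has image $f''\mathcal{U}_1\times g''\mathcal{V}_1$, which is cofinal since each factor is.

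Part~(4) is the one mildly substantive point. Given $\mathcal{W}\ge_T\mathcal{U}$ and $\mathcal{W}\ge_T\mathcal{V}$, choose monotone cofinal $f\colon\mathcal{W}\to\mathcal{U}$ and $g\colon\mathcal{W}\to\mathcal{V}$ and set $h(W)=\lgl f(W),g(W)\rgl$, which is visibly monotone. To see $h''\mathcal{W}$ is cofinal in $\mathcal{U}\times\mathcal{V}$, take $\lgl A,B\rgl$, pick $W_A,W_B\in\mathcal{W}$ with $f(W_A)\sse A$ and $g(W_B)\sse B$, and note $W_A\cap W_B\in\mathcal{W}$; monotonicity of $f,g$ then gives $f(W_A\cap W_B)\sse A$ and $g(W_A\cap W_B)\sse B$, so $h(W_A\cap W_B)$ dominates $\lgl A,B\rgl$. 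This is precisely where closure of $\mathcal{W}$ under finite intersections is used. Combining~(4) with~(2) shows $\mathcal{U}\times\mathcal{V}$ is the least Tukey type above both $\mathcal{U}$ and $\mathcal{V}$.

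For~(5), rather than build a cofinal map from $\mathcal{U}\cdot\mathcal{V}$ to $\mathcal{V}$ by hand, I would observe that the two coordinate projections $\om\times\om\to\om$ witness $\mathcal{U}\le_{RK}\mathcal{U}\cdot\mathcal{V}$ and $\mathcal{V}\le_{RK}\mathcal{U}\cdot\mathcal{V}$: unwinding Notation~\ref{notation.cross}, the set $X\times\om$ belongs to $\mathcal{U}\cdot\mathcal{V}$ exactly when $X\in\mathcal{U}$, and $\om\times X$ belongs to $\mathcal{U}\cdot\mathcal{V}$ exactly when $X\in\mathcal{V}$ (here using that $\mathcal{V}$ is proper, so $\emptyset\notin\mathcal{V}$). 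Fact~\ref{fact.TRK} upgrades these to $\mathcal{U}\cdot\mathcal{V}\ge_T\mathcal{U}$ and $\mathcal{U}\cdot\mathcal{V}\ge_T\mathcal{V}$, and then $\mathcal{U}\cdot\mathcal{V}\ge_T\mathcal{U}\times\mathcal{V}$ follows from part~(4). The overall argument is routine; the only things to watch are the direction in which cofinal maps point and the fact that the ambient order is reverse inclusion, so that "small" sets are "large" in the order and finite intersections supply the common refinements needed in~(1), (4), and~(5).
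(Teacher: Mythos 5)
Your proposal is correct and is essentially the standard argument that the paper defers to (the paper gives no proof here, citing Fact 7 of \cite{Dobrinen/Todorcevic11}): diagonal, projection, product, and pairing maps checked via Fact \ref{fact.monotonemap}, with directedness of $\mathcal{W}$ doing the work in (4), and the Rudin--Keisler projections plus Fact \ref{fact.TRK} handling (5). One cosmetic slip: in (5) the equivalence $X\times\om\in\mathcal{U}\cdot\mathcal{V}\Leftrightarrow X\in\mathcal{U}$ uses properness of $\mathcal{V}$, while $\om\times X\in\mathcal{U}\cdot\mathcal{V}\Leftrightarrow X\in\mathcal{V}$ uses properness of $\mathcal{U}$ (your parenthetical attributes both to $\mathcal{V}$), but both hold since ultrafilters are proper by definition.
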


\begin{rem}\label{rem.dots}
One cannot conclude from the above that $\mathcal{U}\cdot\mathcal{V}\equiv_T\mathcal{U}\times\mathcal{V}$.
More will be said about this in  Section \ref{sec.Fubprod}, especially in
Theorem \ref{thm.Milovich5.2}.
\end{rem}

Next, we review the definitions of important types of ultrafilters. (1) - (5)  can found in \cite{Bartoszynski/JudahBK}; (6) is found in \cite{Kunen78}.
Recall the standard notation $\sse^*$,
where for $X,Y\sse\om$, we write
$X\sse^* Y$ to denote that $|X\setminus Y|<\om$.

\begin{defn}\label{defn.uftypes}
Let $\mathcal{U}$ be a nonprincipal ultrafilter.
\begin{enumerate}
\item
$\mathcal{U}$ is {\em selective} if 
for every function $f:\om\ra\om$,
there is an $X\in\mathcal{U}$ such that either $f\re X$ is constant or $f\re X$ is one-to-one.
\item
$\mathcal{U}$ is {\em Ramsey} if for each $2$-coloring $f:[\om]^2\ra 2$, there is  an $X\in\mathcal{U}$ such that $f\re [X]^2$ takes on exactly one color.
\item
$\mathcal{U}$ is a {\em p-point} if for every family $\{X_n:n<\om\}\sse\mathcal{U}$
there is an $X\in\mathcal{U}$ such that $X\sse^* X_n$ for each $n<\om$.
\item
$\mathcal{U}$ is a {\em q-point} if for each partition of $\om$ into finite pieces $\{I_n:n<\om\}$,
there is an $X\in\mathcal{U}$ such that $|X\cap I_n|\le 1$ for each $n<\om$.
\item
$\mathcal{U}$ is {\em rapid} if 
for each function $f:\om\ra\om$,
there exists an $X\in\mathcal{U}$ such that $|X\cap f(n)|\le n$ for each $n<\om$.
\item
$\mathcal{U}$ 
is {\em  $\kappa$-OK} if whenever $U_n\in\mathcal{U}$ ($n<\om$),
there is a $\kappa$-sequence $\lgl V_{\al}:\al<\kappa\rgl$ of elements of $\mathcal{U}$ such that for all $n\ge 1$,
for all $\al_1<\dots<\al_n<\kappa$,
$V_{\al_1}\cap\dots\cap V_{\al_n}\sse^* U_n$.
\end{enumerate}
\end{defn}

 $\mathfrak{c}$-OK ultrafilters always exist in ZFC, hence so do $\kappa$-OK points, for all $\om_1\le\kappa\le\mathfrak{c}$.
All of the ultrafilters  in (1) - (5) exist when either the Continuum Hypothesis (CH) or  Martin's Axiom (MA)  holds.
In fact, much weaker cardinal invariant assumptions suffice to ensure their existence. 
However, the existence of selective ultrafilters, p-points, q-points, or even rapid ultrafilters does not follow from ZFC.
We refer the interested reader to \cite{Bartoszynski/JudahBK}, \cite{BlassHB}  and \cite{Kunen78} for further exposition on these topics.

The following well-known implications (1) and (2) can  be found in \cite{Bartoszynski/JudahBK},
while (3) can be found in \cite{Kunen78}.

\begin{thm}
\begin{enumerate}
\item
An ultrafilter is  selective if and only if
it is Ramsey
if and only if
 it is both a p-point and  a q-point.
\item
Every q-point is rapid.
\item
If $\mathcal{U}$ is $\kappa$-OK and $\kappa>$ cof$(\mathcal{U})$, then $\mathcal{U}$ is a p-point.
\end{enumerate}
\end{thm}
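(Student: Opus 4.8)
The plan is to treat part~(1) as a single cycle of implications and then settle parts~(2) and~(3) by short direct arguments. For part~(1) I would prove
\[
\text{Ramsey}\ \Rightarrow\ \text{selective}\ \Rightarrow\ (\text{p-point and q-point})\ \Rightarrow\ \text{Ramsey},
\]
which gives all three stated equivalences. The first arrow: given $f:\om\ra\om$, colour a pair $\{m,n\}\in[\om]^2$ according to whether $f(m)=f(n)$; a $c$-homogeneous $X\in\mathcal{U}$ is one on which $f$ is constant (first colour) or one-to-one (second colour). For ``selective $\Rightarrow$ p-point'': replacing a sequence $\lgl X_n\rgl$ in $\mathcal{U}$ by its finite initial intersections we may assume it decreasing, and (by a trivial reduction) with empty intersection; let $f(k)$ be the largest $n$ with $k\in X_n$. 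Selectivity produces $X\in\mathcal{U}$ with $f\re X$ constant or one-to-one; the constant case (value $m$, say) would make $X$ disjoint from $X_{m+1}\in\mathcal{U}$, and one-to-oneness forces $X\sse^* X_n$ for every $n$. For ``selective $\Rightarrow$ q-point'': given a partition $\lgl I_n\rgl$ of $\om$ into finite sets, apply selectivity to the map sending each $k$ to the index of its block; the constant case is impossible (blocks are finite), and a one-to-one set is exactly a block-selector.

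The one substantial implication is ``p-point and q-point $\Rightarrow$ Ramsey''. Given $c:[\om]^2\ra 2$: for each $n$ the slice $m\mapsto c(\{n,m\})$ is constant on some $C_n\in\mathcal{U}$, say with value $i_n$, because $\mathcal{U}$ is an ultrafilter; fix $i$ with $Z:=\{n:i_n=i\}\in\mathcal{U}$. It then suffices to build $Y\in\mathcal{U}$ with $Y\sse Z$ and $m\in C_n$ whenever $n<m$ lie in $Y$, since then $[Y]^2$ is monochromatic. To this end, by the p-point property fix $W\in\mathcal{U}$ almost inside each $C_n$ ($n\in Z$), choose a nondecreasing $f$ with $f(n)>n$ and $W\cap[f(n),\om)\sse C_n$ for $n\in Z$, set $b_0=0$ and $b_{k+1}=f(b_k)$, and let $I_k=[b_k,b_{k+1})$; by the q-point property fix $V\in\mathcal{U}$ meeting each $I_k$ in at most one point, and put $X=V\cap W\cap Z\in\mathcal{U}$. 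The trick is that, $\mathcal{U}$ being an ultrafilter, one of the two parity-halves of the increasing enumeration of $X$ already lies in $\mathcal{U}$; take that half to be $Y$. For $n<m$ in $Y$ there are then at least two members of $X$ in $(n,m]$, so $m$ lies in an interval $I_k$ whose index exceeds that of $n$'s interval by at least $2$, whence $m\ge b_k\ge f(n)$ from the recursion defining the $b_j$ and the monotonicity of $f$, and thus $m\in W\cap[f(n),\om)\sse C_n$. I expect the main obstacle to be precisely this last estimate: choosing $f$ and the $b_j$ so that ``two intervals apart'' delivers $m\ge f(n)$, and checking the parity-halving step does what is needed; everything else is bookkeeping. (Together with the easy implications above this also gives ``selective $\Rightarrow$ Ramsey'', closing the cycle.)

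For part~(2): given $f:\om\ra\om$, first replace it by a strictly increasing function dominating it, which is harmless since rapidity for a larger function implies it for $f$. Partition $\om$ into the finite blocks $[f(n),f(n+1))$ together with $[0,f(0))$; a q-point set $X$ meets each block in at most one point, so after deleting the finitely many members of $X$ below $f(0)$ one has $|X\cap f(n)|\le n$ for every $n$. For part~(3): let $\la=\mathrm{cof}(\mathcal{U})$ and fix a base $\{B_\xi:\xi<\la\}$ of $\mathcal{U}$. Given $\lgl U_n:n<\om\rgl$ in $\mathcal{U}$, which we may take decreasing, apply $\kappa$-OK-ness to get $\lgl V_\al:\al<\kappa\rgl$ as in the definition, and for each $\al$ choose $\xi(\al)<\la$ with $B_{\xi(\al)}\sse V_\al$. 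Since $\kappa>\la$ and $\kappa$ is uncountable, the map $\al\mapsto\xi(\al)$ cannot be finite-to-one (else $\kappa$ would be a union of at most $\la$ finite sets), so some value $\xi^\ast$ is attained on an infinite set of indices $\al_0<\al_1<\cdots$; put $X=B_{\xi^\ast}\in\mathcal{U}$. Then for every $n\ge1$, $X\sse V_{\al_1}\cap\cdots\cap V_{\al_n}\sse^* U_n$, hence $X\sse^* U_n$ for all $n$ (using the decreasingness of the $U_n$ for $n=0$); so $X$ is a pseudo-intersection of $\lgl U_n\rgl$ lying in $\mathcal{U}$, and $\mathcal{U}$ is a p-point.
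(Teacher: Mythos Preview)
Your argument is correct in all three parts. The paper does not actually prove this theorem: it merely states that (1) and (2) are well known and can be found in \cite{Bartoszynski/JudahBK}, while (3) is in \cite{Kunen78}. Your proofs follow the standard lines from those sources. A couple of minor points worth making explicit: in the ``selective $\Rightarrow$ p-point'' reduction, the case $\bigcap_n X_n\in\mathcal{U}$ is trivial and otherwise intersect each $X_n$ with the complement of $\bigcap_n X_n$ to get empty intersection while staying in $\mathcal{U}$; and in part~(3), the pigeonhole step uses that $\mathrm{cof}(\mathcal{U})\ge\aleph_1$ for nonprincipal $\mathcal{U}$, so that $\lambda\cdot\aleph_0=\lambda<\kappa$. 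With those small clarifications your write-up is complete.
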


Another class of ultrafilters considered are ordered union  ultrafilters on base set $\FIN$, which
 denotes the collection of nonempty finite subsets of $\om$.
Ultrafilters on this base set have been studied by Glazer (see \cite{ComfortBK74}) and later by Blass in \cite{Blass87}.
The availability of  Hindman's Theorem on $\FIN$ (see \cite{Hindman74}) enables the construction of ultrafilters which have strong Ramsey properties.
Here we only provide the basics, leaving the interested reader with the resources \cite{ComfortBK74}, \cite{Blass87} and \cite{Dobrinen/Todorcevic11} for further information.

Let
$\min:\FIN\ra\om$ and  $\max:\FIN\ra\om$
denote the maps which take each non-empty finite set $x\in\FIN$ to its minimal and maximal elements, respectively.
The map $(\min,\max):\FIN\ra\om\times\om$  is defined by $(\min,\max)(x)=(\min(x),\max(x))$.
Whenever $\mathcal{U}$ is an ultrafilter on $\FIN$,
then 
$\mathcal{U}_{\min}$
 ($\mathcal{U}_{\max}$)  is an ultrafilter on $\om$
 generated by the collection of sets $\{\min(x):x\in U\}$,  $U\in\mathcal{U}$
($\{\max(x):x\in U\}$, $U\in\mathcal{U}$).
$\mathcal{U}_{(\min,\max)}$ is the ultrafilter on $\om\times\om$ generated by
$\{(\min(x),\max(x)):x\in U\}$, $U\in\mathcal{U}$.
Moreover,
$\mathcal{U}\ge_{RK}\mathcal{U}_{\min,\max}$,
$\mathcal{U}_{\min,\max}\ge_{RK}\mathcal{U}_{\min}$, and
$\mathcal{U}_{\min,\max}\ge_{RK}\mathcal{U}_{\max}$.
Thus, the same Tukey reductions between these ultrafilters hold.

\begin{defn}\label{def.block-gen}
A {\em block-sequence} of $\FIN$ is an infinite sequence $X=(x_n)_{n<\om}$ of elements of  $\FIN$ such that for each $n<\om$,
$\max(x_n)<\min(x_{n+1})$.
For a block-sequence $X$,
we let $[X]$ denote  $\{x_{n_1}\cup\dots\cup x_{n_k}:k<\om$ and $n_1<\dots <n_k\}$, the set of finite unions of elements of $X$.
For any $m<\om$,
let $X/m$ denote $(x_n)_{n\ge k}$ where $k$ is least such that $\min(x_k)\ge m$.

An (idempotent) ultrafilter $\mathcal{U}$ on base set $\FIN$ is called {\em block generated}
if it is generated by the set of $[X]$, where $X$ is some collection of block sequences.
Such ultrafilters are called 
{\em ordered-union } in \cite{Blass87}.
\end{defn}

We now state some facts about block-generated ultrafilters.

\begin{fact}\label{facts.beginning}
Let $\mathcal{U}$ be any nonprincipal block-generated ultrafilter on $\FIN$.
\begin{enumerate}
\item
(Corollary 3.6, \cite{Blass87})
$\mathcal{U}$ is not a p-point.
\item
$\mathcal{U}$ is not a q-point.
\item
(Corollary 3.7, \cite{Blass87})
$\mathcal{U}_{\min,\max}$ is isomorphic (i.e.\ Rudin-Keisler equivalent) to $\mathcal{U}_{\min}\cdot\mathcal{U}_{\max}$.
\item
(Proposition 3.9 \cite{Blass87})
$\mathcal{U}_{\min}$ and $\mathcal{U}_{\max}$
are q-points.
\item 
(Fact 65 in \cite{Dobrinen/Todorcevic11})
$\mathcal{U}_{\min,\max}$ is neither a p-point nor a q-point.
\item
(Fact 65 in \cite{Dobrinen/Todorcevic11})
If $\mathcal{U}_{\min}$ is selective, then $\mathcal{U}_{\min,\max}$ is rapid.
\end{enumerate}
\end{fact}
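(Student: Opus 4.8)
\textbf{Proof proposal for Fact \ref{facts.beginning}(6).}

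The plan is to prove the contrapositive in spirit: assuming $\mathcal{U}_{\min}$ is selective, I will exhibit, for an arbitrary function $f\colon\om\to\om$, a set in $\mathcal{U}_{\min,\max}$ witnessing rapidity, i.e.\ a set $A$ in the ultrafilter with $|A\cap f(n)|\le n$ for all $n$. Since $\mathcal{U}_{\min,\max}$ lives on $\om\times\om$ rather than $\om$, I will first fix a bijection (or at least an injection with cofinite image) $e\colon\om\times\om\to\om$ and interpret rapidity of $\mathcal{U}_{\min,\max}$ through $e$; a convenient choice is an $e$ that is increasing in each coordinate and satisfies $e(i,j)\ge j$, so that controlling the second coordinate $\max(x)$ controls the position of $e(\min(x),\max(x))$ in $\om$. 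The point of Fact \ref{facts.beginning}(3) is that $\mathcal{U}_{\min,\max}\equiv_{RK}\mathcal{U}_{\min}\cdot\mathcal{U}_{\max}$, so a set in $\mathcal{U}_{\min,\max}$ corresponds to a set $B\subseteq\om\times\om$ with $\{i:\{j:(i,j)\in B\}\in\mathcal{U}_{\max}\}\in\mathcal{U}_{\min}$.

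Next I would use selectivity of $\mathcal{U}_{\min}$ as follows. Given the target function $f$, for each pair $(i,j)$ the constraint is roughly that $e(i,j)$ should not fall too early relative to $f$; what matters is how fast $f$ grows. Using that $\mathcal{U}_{\min}$ is a q-point (indeed selective), pass to a block sequence $X=(x_n)_{n<\om}$ with $[X]\in\mathcal{U}$ whose \emph{gaps grow so fast} that $\min(x_{n})$ already exceeds $f$ evaluated at the index counting how many blocks precede $x_n$; since $\mathcal{U}_{\min}$ selective lets us thin out the $\min$-values to be as sparse as we like (selectivity gives a set meeting each interval of any prescribed finite partition in at most one point, and p-point-ness lets us diagonalize), we can arrange $\min(x_{n+1})$ to dominate any prescribed function of $\min(x_0),\dots,\min(x_n)$. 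Then for a finite union $y=x_{n_1}\cup\dots\cup x_{n_k}$ with $n_1<\dots<n_k$, $\min(y)=\min(x_{n_1})$ is large and $\max(y)=\max(x_{n_k})$; the sparsity of the block sequence forces $e(\min(y),\max(y))$ to lie beyond $f(n_1)$, and a counting argument on how many such unions have $\min(x_{n_1})$ below a given level yields the bound $|A\cap f(n)|\le n$.

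The main obstacle I anticipate is the bookkeeping that ties together three different ``speeds'': the growth of $f$, the growth of the block-sequence gaps $\min(x_{n+1})$ relative to $\max(x_n)$, and the combinatorics of how many finite unions of blocks can have a small minimum. Specifically, after reducing to $\mathcal{U}_{\min}\cdot\mathcal{U}_{\max}$ one must choose, \emph{inside} each column $i\in A_{\min}$, a set $C_i\in\mathcal{U}_{\max}$ of second coordinates so thin that $\{e(i,j):j\in C_i\}$ is rapid-sparse, and then check that the union over $i\in A_{\min}$ still satisfies a single rapidity bound simultaneously — this is where selectivity (as opposed to mere p-point-ness) of $\mathcal{U}_{\min}$ is used to control the first coordinates, via a diagonal intersection argument choosing $A_{\min}$ to thin past every $C_i$. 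The cleanest route is probably: (i) reduce to $\mathcal{U}_{\min}\cdot\mathcal{U}_{\max}$ by Fact \ref{facts.beginning}(3); (ii) use that $\mathcal{U}_{\max}$ is a q-point (Fact \ref{facts.beginning}(4)) to thin each column; (iii) use selectivity of $\mathcal{U}_{\min}$ to diagonalize the column choices into one set; (iv) verify the rapidity inequality by the counting estimate above. I expect step (iii)–(iv), the simultaneous control, to be the delicate point, and I would handle it by choosing the $n$-th retained first coordinate to dominate the finite data $C_0,\dots,C_{n-1}$ already used, so that only the first $n$ columns can contribute points below $f(n)$.
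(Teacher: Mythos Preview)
This survey does not supply a proof of item~(6); the result is simply cited from Fact~65 of \cite{Dobrinen/Todorcevic11}, so there is no in-paper argument against which to compare your proposal.

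That said, your ``cleanest route'' (i)--(iv) is correct and is the natural argument. Two small comments. The block-sequence manipulations in your first two paragraphs are an unnecessary detour: once you invoke (3) and pass to $\mathcal{U}_{\min}\cdot\mathcal{U}_{\max}$, the block structure is irrelevant and the problem is purely about a Fubini product on $\om\times\om$. And your worry about ``diagonalizing the column choices into one set'' is misplaced: in a Fubini product the fibers $C_i\in\mathcal{U}_{\max}$ are allowed to vary freely with $i$, so nothing needs to be merged on the $\mathcal{U}_{\max}$ side; one only has to thin the set $A$ of first coordinates in $\mathcal{U}_{\min}$ (rapidity suffices, and this follows from selectivity) and thin each fiber in $\mathcal{U}_{\max}$ (rapidity suffices, and this follows from the q-point property in (4)), after which your counting estimate goes through exactly as you describe. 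In fact a slightly sharper bookkeeping --- choose $C_i$ so that the $k$-th element of $\{e(i,j):j\in C_i\}$ exceeds $f\bigl(2^{\,i+1}(k+1)\bigr)$ and sum the geometric series --- shows that rapidity of $\mathcal{U}_{\max}$ alone already makes the Fubini product rapid, so the selectivity hypothesis on $\mathcal{U}_{\min}$ is stronger than what the argument actually requires.
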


By (5),  the existence of block-generated ultrafilters on $\FIN$
 cannot be proved on from
ZFC,
though using Hindman's Theorem one can easily establish the existence of such ultrafilters using CH or MA.


\section{Basic Differences between Tukey and Rudin-Keisler}\label{sec.Fubprod}

As we pointed out in the Introduction,
Tukey reducibility is a generalization of Rudin-Keisler reducibility: Whenever $\mathcal{U}\ge_{RK}\mathcal{V}$, then also $\mathcal{U}\ge_T\mathcal{V}$.
The Tukey equivalence class of an ultrafilter $\mathcal{U}$ is the collection of all ultrafilters cofinally equivalent to $\mathcal{U}$.
The Rudin-Keisler equivalence class of $\mathcal{U}$ is the collection of all ultrafilters $\mathcal{V}$ isomorphic to $\mathcal{U}$, meaning that there is a permutation $f:\om\ra\om$ such that $\mathcal{V}=f(\mathcal{U})$.
Thus, the  Tukey equivalence class of an ultrafilter is composed of isomorphism classes.
Since Tukey reducibility is defined using cofinal maps on an ultrafilter, 
 a priori, there are $2^{\mathfrak{c}}$ such maps.
Thus,  it seems at first glance that the Tukey type of an ultrafilter has cardinality $2^{\mathfrak{c}}$.
We do know this to be the case for the maximum Tukey type, as the construction in 
Theorem \ref{thm.3}
produces $2^{\mathfrak{c}}$ ultrafilters, each with maximum Tukey type.
Juxtaposing this, every Rudin-Keisler equivalence class of an ultrafilter has cardinality $\mathfrak{c}$, as there are exactly continuum many permutations of $\om$.
Thus, the maximum Tukey type is composed of $2^{\mathfrak{c}}$ isomorphism classes of ultrafilters.
In particular, this provides an example (in ZFC) of a Tukey type with cardinality strictly greater than the cardinality of any Rudin-Keisler type.

This sets the stage for the basic differences between Tukey and Rudin-Keisler equivalence relations, and moreover, for studying the structure of the isomorphism classes within a Tukey class.
We shall see  in Section \ref{sec.canonical.maps} that sometimes Tukey types have cardinality $\mathfrak{c}$.
However, the existence of any of the known examples of ultrafilters with Tukey type of cardinality $\mathfrak{c}$ requires axioms additional to ZFC, and moreover, their Tukey types still contain at least $\om_1$ many isomorphism classes, as shall be seen in Section \ref{sec.Ramsey}.

Another easily seen difference between Rudin-Keisler and Tukey reducibility is that,
while there is a maximum Tukey type, there is 
no maximal Rudin-Keisler class of ultrafilters.
The Fubini product construction makes this easy to see, as
given any non-isomorphic ultrafilters $\mathcal{U},\mathcal{V}$, it is always the case that $\mathcal{U}\cdot\mathcal{V}>_{RK}\mathcal{U},\mathcal{V}$.

Building on this, 
another fundamental difference between Tukey and Rudin-Keisler reducibilities is that  there are cases when Fubini products  of an ultrafilter with itself are contained within the Tukey class of an ultrafilter.
For any ultrafilter with maximum Tukey type, its Fubini product with itself is also Tukey maximal, hence in the same Tukey type.

In Section 4 of \cite{Dobrinen/Todorcevic11}, it was shown that comparing Tukey types of ultrafilters the directed partial order $(\om^{\om},\le)$ was useful for gaining information about Fubini products.
(In this context, $\om^{\om}$ is thought of as the collection of all functions from $\om$ into $\om$, and $\le$ is the dominance relation, meaning $f\le g$ if and only if for all $n\in\om$, $f(n)\le g(n)$.)
In particular, the following equivalence was shown.
\begin{thm}[Dobrinen/\Todorcevic, Theorem 35 in \cite{Dobrinen/Todorcevic11}]\label{thm.DT35}
The following are equivalent for a p-point $\mathcal{U}$.
\begin{enumerate}
\item
$(\mathcal{U},\contains)\ge_T(\om^{\om},\le)$;
\item
$(\mathcal{U},\contains)\equiv_T \Pi_{n<\om}\mathcal{U}$, with the cartesian product order;
\item
$\mathcal{U}\equiv_T\mathcal{U}\cdot\mathcal{U}$.
\end{enumerate}
\end{thm}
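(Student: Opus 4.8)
The plan is to establish the cycle of implications $(1)\Rightarrow(2)\Rightarrow(3)\Rightarrow(1)$, exploiting at each step the fact (already recorded in this survey) that for ultrafilters all Tukey reductions may be taken to be witnessed by \emph{monotone} cofinal maps, together with the characterization that a monotone map is cofinal iff its image is cofinal.

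First I would prove $(1)\Rightarrow(2)$. We always have $\mathcal{U}\le_T\Pi_{n<\om}\mathcal{U}$ by projecting onto a single coordinate, so the content is the reverse reduction $\Pi_{n<\om}\mathcal{U}\le_T\mathcal{U}$. Assume $(\mathcal{U},\contains)\ge_T(\om^\om,\le)$, say via a monotone cofinal map $g:\mathcal{U}\to\om^\om$; equivalently, by the unbounded-map formulation, there is a Tukey map $\om^\om\to\mathcal{U}$. The idea is that a single $X\in\mathcal{U}$ can be made to ``code'' a whole sequence $\lgl X_n:n<\om\rgl$ of members of $\mathcal{U}$: using the p-point property, pass to a pseudointersection-style diagonalization so that each $X_n$ is recovered from $X$ together with the value $g(X)(n)$. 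Concretely, I would fix for each $n$ a decreasing (mod finite) enumeration witnessing that $\mathcal{U}$ is a p-point, and use the function $g(X)$ to select, coordinate by coordinate, a tail of the $n$-th given set that is almost contained in $X$; monotonicity of $g$ propagates to monotonicity of the resulting map $\mathcal{U}\to\Pi_{n<\om}\mathcal{U}$, and cofinality of $g$'s image forces cofinality of the image in the product. This is the step I expect to be the main obstacle: one must arrange the p-point diagonalizations uniformly enough that the map is genuinely monotone and its image genuinely cofinal in the product order, and this is where the hypothesis that $\mathcal{U}$ is a p-point is really used.

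Next, $(2)\Rightarrow(3)$. One shows in general that $\mathcal{U}\cdot\mathcal{U}\le_T\Pi_{n<\om}\mathcal{U}$: a set $A\in\mathcal{U}\cdot\mathcal{U}$ has $\mathcal{U}$-many sections $A_{(i)}:=\{j:(i,j)\in A\}$ in $\mathcal{U}$, so mapping $A$ to the sequence $\lgl A_{(i)}:i<\om\rgl$ (replacing $A_{(i)}$ by $\om$ when it is not in $\mathcal{U}$) is monotone with cofinal image, giving $\mathcal{U}\cdot\mathcal{U}\le_T\Pi_{n<\om}\mathcal{U}\equiv_T\mathcal{U}$. Since $\mathcal{U}\cdot\mathcal{U}\ge_T\mathcal{U}$ always holds by the Fubini-product facts quoted above, we conclude $\mathcal{U}\equiv_T\mathcal{U}\cdot\mathcal{U}$; note this direction does not even need the p-point hypothesis.

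Finally, $(3)\Rightarrow(1)$. Here I would show that $\mathcal{U}\cdot\mathcal{U}\ge_T(\om^\om,\le)$ for \emph{every} nonprincipal ultrafilter $\mathcal{U}$ — indeed even $\mathcal{U}\cdot\mathcal{V}\ge_T\om^\om$ — because a set $A\in\mathcal{U}\cdot\mathcal{U}$ determines, on a $\mathcal{U}$-large set of first coordinates $i$, a natural number such as $\min A_{(i)}$ (or the value of a fixed increasing enumeration of $A_{(i)}$ at a fixed spot), and smaller $A$ in $\contains$ yields pointwise-larger such data, so this furnishes a monotone cofinal map $\mathcal{U}\cdot\mathcal{U}\to(\om^\om,\le)$; the cofinal image is exactly the point where one uses that arbitrarily fast-growing functions are realized. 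Composing with $\mathcal{U}\equiv_T\mathcal{U}\cdot\mathcal{U}$ from $(3)$ gives $\mathcal{U}\ge_T(\om^\om,\le)$, which is $(1)$. This closes the cycle, and again only the one implication $(1)\Rightarrow(2)$ genuinely requires $\mathcal{U}$ to be a p-point.
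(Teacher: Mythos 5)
Your cycle $(1)\Rightarrow(2)\Rightarrow(3)\Rightarrow(1)$ is the right architecture, and your $(3)\Rightarrow(1)$ argument (the map $A\mapsto\lgl\min A_{(i)}\rgl_{i<\om}$, cofinal because $\{(i,j):j\ge g(i)\}\in\mathcal{U}\cdot\mathcal{U}$) is correct. But the heart of the theorem is $(1)\Rightarrow(2)$, and there you have only a hope, not a proof: you never define the map $\mathcal{U}\ra\Pi_{n<\om}\mathcal{U}$, and the scheme you sketch cannot work as stated, since the definition of your map must not depend on the target sequence $\lgl X_n\rgl$, yet "recovering each $X_n$ from $X$ together with $g(X)(n)$" presupposes exactly such dependence. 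The missing idea is to separate the two ingredients: first, $(1)$ gives $\mathcal{U}\equiv_T\mathcal{U}\times\mathcal{U}\ge_T\mathcal{U}\times\om^{\om}$, so it suffices to prove the lemma (this is where p-point is used) that $\Pi_{n<\om}\mathcal{U}\le_T\mathcal{U}\times\om^{\om}$. That lemma has a clean witness: the monotone map $(U,g)\mapsto\lgl U\setminus g(n)\rgl_{n<\om}$ has cofinal image, because given $\lgl X_n\rgl\in\Pi_{n<\om}\mathcal{U}$ one picks a pseudointersection $X\in\mathcal{U}$ with $X\sse^* X_n$ for all $n$ and then $g(n)$ with $X\setminus g(n)\sse X_n$. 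Your proposal gestures at the pseudointersection but never isolates this lemma or the factor $\om^{\om}$ through which the cofinal map must pass, so the implication is not established.

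There is also a concrete error in $(2)\Rightarrow(3)$. The inequality you want, $\mathcal{U}\cdot\mathcal{U}\le_T\Pi_{n<\om}\mathcal{U}$, is indeed true for every ultrafilter, but your witness goes the wrong way: a monotone map with cofinal image \emph{from} $\mathcal{U}\cdot\mathcal{U}$ \emph{into} $\Pi_{n<\om}\mathcal{U}$ would prove $\Pi_{n<\om}\mathcal{U}\le_T\mathcal{U}\cdot\mathcal{U}$, not the reduction you need. Worse, the section map itself is neither monotone (if $B\sse A$ a section can drop out of $\mathcal{U}$, flipping its image from $A_{(i)}$ to $\om$) nor Tukey: the sets $A_k=\{(i,j):i\ne k\}$ form an unbounded subset of $\mathcal{U}\cdot\mathcal{U}$ all of whose images equal the constant sequence $\lgl\om,\om,\dots\rgl$. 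The repair is to map in the other direction, e.g.\ send $(U,\lgl X_i\rgl)\in\mathcal{U}\times\Pi_{n<\om}\mathcal{U}$ to $\{(i,j):i\in U,\ j\in X_i\}$; this is monotone with cofinal image in $\mathcal{U}\cdot\mathcal{U}$, and $\mathcal{U}\times\Pi_{n<\om}\mathcal{U}\equiv_T\Pi_{n<\om}\mathcal{U}$, after which $(2)$ and the general fact $\mathcal{U}\cdot\mathcal{U}\ge_T\mathcal{U}$ finish that implication.
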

It follows that if $\mathcal{U}$ is a rapid p-point, then $\mathcal{U}\cdot\mathcal{U}\equiv_T\mathcal{U}$.
Since $\mathcal{U}\cdot\mathcal{U}$ is never a p-point, and since the isomorphism class of a p-point consists only of p-points,
it follows that
every p-point $\mathcal{U}$ is strictly Rudin-Keisler  below
 $\mathcal{U}\cdot\mathcal{U}$.
On the other hand, it was  shown in Theorem 38 in \cite{Dobrinen/Todorcevic11} that, assuming $\mathfrak{p}=\mathfrak{c}$, there is a p-point $\mathcal{U}$ such that $\mathcal{U}\not\ge_T\om^{\om}$ and hence $\mathcal{U}<_T\mathcal{U}\cdot\mathcal{U}<_T [\mathfrak{c}]^{<\om}$.
This led to
 Question 39  in \cite{Dobrinen/Todorcevic11}, which asked whether there is an ultrafilter with the property that $\mathcal{U}\cdot\mathcal{U}<_T
\mathcal{U}\cdot\mathcal{U}\cdot\mathcal{U}$.
Milovich answered this questions strongly in the negative by proving 
the following theorem.

\begin{thm}[Milovich, Theorem 5.2 in \cite{Milovich12}]\label{thm.Milovich5.2}
For all nonprincipal filters $\mathcal{F},\mathcal{G}$ on $\om$,
$\mathcal{F}\cdot\mathcal{G}\equiv_T\mathcal{F}\cdot\mathcal{G}\cdot\mathcal{G}$.
In particular,  that for all nonprincipal filters, $\mathcal{F}\cdot\mathcal{F}\equiv_T \mathcal{F}\cdot\mathcal{F}\cdot\mathcal{F}$.
\end{thm}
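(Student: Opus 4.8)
The plan is to prove the two Tukey inequalities $\mathcal{F}\cdot\mathcal{G}\cdot\mathcal{G}\ge_T\mathcal{F}\cdot\mathcal{G}$ and $\mathcal{F}\cdot\mathcal{G}\ge_T\mathcal{F}\cdot\mathcal{G}\cdot\mathcal{G}$ separately; the ``in particular'' clause is the case $\mathcal{G}=\mathcal{F}$. Since Fubini products are associative (under the identification of $(\om\times\om)\times\om$ with $\om\times(\om\times\om)$), $\mathcal{F}\cdot\mathcal{G}\cdot\mathcal{G}=(\mathcal{F}\cdot\mathcal{G})\cdot\mathcal{G}$, and the first inequality is immediate: the assignment $C\mapsto\{(i,j):\{k:((i,j),k)\in C\}\in\mathcal{G}\}$, for $C\in(\mathcal{F}\cdot\mathcal{G})\cdot\mathcal{G}$, is monotone and its image is cofinal in $\mathcal{F}\cdot\mathcal{G}$ (it already contains every $A\in\mathcal{F}\cdot\mathcal{G}$, via $C=A\times\om$), so it is a cofinal map by Fact~\ref{fact.monotonemap}. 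This is just the filter version of $\mathcal{U}\cdot\mathcal{V}\ge_T\mathcal{U}$.

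The content is the reverse inequality, i.e.\ the construction of a monotone cofinal map $\Phi\colon\mathcal{F}\cdot\mathcal{G}\ra\mathcal{F}\cdot\mathcal{G}\cdot\mathcal{G}$. The first step is to replace both sides by convenient cofinal suborders (allowed by Fact~\ref{fact.cofinalTukeysame}). A cofinal copy of $(\mathcal{F}\cdot\mathcal{G},\contains)$ consists of the ``rectangles'' $R_{F,g}=\bcup_{i\in F}\{i\}\times g(i)$, where $F\in\mathcal{F}$ and $g(i)\in\mathcal{G}$ for every $i$, with the inherited order ($R_{F,g}\contains R_{F',g'}$ iff $F\contains F'$ and $g(i)\contains g'(i)$ for all $i\in F'$). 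Likewise $(\mathcal{F}\cdot\mathcal{G}\cdot\mathcal{G},\contains)$ has a cofinal copy of ``boxes'' $B_{F,g,h}$ determined by $F\in\mathcal{F}$, a $\mathcal{G}$-valued function $g$ on $\om$, and a $\mathcal{G}$-valued function $h$ on $\om\times\om$. So the task is to attach to each rectangle a box, monotonically, so that every box contains some box in the range. The difficulty is structural: the extra layer $h$ of a box is a family of members of $\mathcal{G}$ indexed by the pairs $(i,j)$ filling the middle layer, an $(\mathcal{F}\cdot\mathcal{G})$-indexed family, whereas a rectangle supplies only an $\mathcal{F}$-indexed family of members of $\mathcal{G}$, its fibers $g(i)$.

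The step I expect to be the main obstacle is verifying that $\Phi$ has cofinal image, because every ``fiberwise'' choice of $\Phi$ fails. If $\Phi$ puts over $(i,j)$ a third-layer fiber depending only on $g(i)$ (or only on $j$), then capturing an arbitrary box $B_{F',g',h'}$ forces one to find $G\in\mathcal{G}$ with $G\sse g'(i)$ and $G\times G\sse\{(j,k):k\in h'(i,j)\}$; but $h'$ is arbitrary subject only to each $h'(i,j)\in\mathcal{G}$, so its rows may be chosen to avoid the diagonal, and then no nonempty $G$ works. More generally any fiberwise scheme reduces the capture of an arbitrary target to a statement equivalent to $\mathcal{G}$ being a p-filter, which need not hold. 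Hence $\Phi$ must genuinely interleave coordinates. The natural attempt is to fix a bijection $\om\cong\om\times\om$ and use it to ``unfold'' the first coordinate of a rectangle, so that the $(\mathcal{F}\cdot\mathcal{G})$-indexed demand of the target gets routed through the $\mathcal{F}$-indexed supply of the source together with the filter $\mathcal{F}$ itself. The delicate point --- and, I expect, the real work of the proof --- is to organize this unfolding so that both ``$\Phi(R_{F,g})\in\mathcal{F}\cdot\mathcal{G}\cdot\mathcal{G}$'' and ``every box contains some $\Phi(R_{F,g})$'' are checkable without ever intersecting infinitely many members of $\mathcal{F}$ or of $\mathcal{G}$ at once --- only finitely many at each stage of capturing a fixed box --- so that no p-filter hypothesis is covertly used. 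Granting such a $\Phi$, monotonicity and membership of $\Phi(R_{F,g})$ in $\mathcal{F}\cdot\mathcal{G}\cdot\mathcal{G}$ are routine fiber computations, and the capture of an arbitrary box then completes the argument; together with the easy direction this yields $\mathcal{F}\cdot\mathcal{G}\equiv_T\mathcal{F}\cdot\mathcal{G}\cdot\mathcal{G}$.
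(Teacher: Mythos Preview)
This survey does not itself prove the theorem; it is stated as a result of Milovich \cite{Milovich12}, so there is no in-paper proof to compare against.

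Your analysis is correct as far as it goes. The easy direction is the standard projection, and your argument for why any fiberwise choice of $\Phi$ must fail is exactly right: capturing an arbitrary box would force $G\times G\sse\{(j,k):k\in h'(i,j)\}$ for some $G\in\mathcal{G}$, a p-filter condition on $\mathcal{G}$ that need not hold. You are also right that a bijection $\om\cong\om\times\om$ must enter. But the proposal stops there. You write ``granting such a $\Phi$,'' and that is the gap: constructing $\Phi$ \emph{is} the hard direction. The items you list afterward --- monotonicity, membership in $\mathcal{F}\cdot\mathcal{G}\cdot\mathcal{G}$, cofinality of the image --- are the checks one performs \emph{after} a candidate map exists, not a substitute for producing one. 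What you have is a correct specification of the obstacle, not an argument that clears it. For orientation: the cleanest route is typically not a single explicit formula for $\Phi$ but an interposed auxiliary directed set, with the bijection applied to the \emph{index set} of the third-layer function $h$ (collapsing $\mathcal{G}^{\om\times\om}$ to $\mathcal{G}^{\om}$) rather than woven into the values; the step where nonprincipality of $\mathcal{F}$ genuinely enters --- extracting, monotonically in the rectangle, an $\om$-indexed family in $\mathcal{G}$ from its merely $F$-indexed fibers --- is precisely the one you have flagged as ``the real work'' and left undone.
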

Independently, Blass (unpublished) proved that for all ultrafilters $\mathcal{U}$, 
we have that $\mathcal{U}\cdot\mathcal{U}\equiv_T \mathcal{U}\cdot\mathcal{U}\cdot\mathcal{U}$.

In  \cite{Dobrinen/Todorcevic11}, Dobrinen and \Todorcevic\ proved that whenever $\mathcal{U}$ and $\mathcal{V}$ are rapid p-points,
then $\mathcal{U}\cdot\mathcal{V}\equiv_T\mathcal{V}\cdot\mathcal{U}$.
 Milovich extended this in \cite{Milovich12} to the case when $\mathcal{U}$ and $\mathcal{V}$ are simply p-filters.

\begin{thm}[Milovich, Theorem 5.4 in \cite{Milovich12}]\label{thm.Milovichreversible}
It $\mathcal{F}$ and $\mathcal{G}$ are nonprincipal filters on $\om$ and $\mathcal{G}$ is a p-filter,
then $\mathcal{F}\cdot\mathcal{G}\equiv_T \mathcal{F}\times\mathcal{G}\times \om^{\om}$.
Therefore, if $\mathcal{F}$ is also a p-filter,
then $\mathcal{F}\cdot\mathcal{G}\equiv_T\mathcal{G}\cdot\mathcal{F}$.
\end{thm}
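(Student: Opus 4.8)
The plan is to prove the displayed equivalence $\mathcal{F}\cdot\mathcal{G}\equiv_T\mathcal{F}\times\mathcal{G}\times\om^{\om}$ by establishing the two Tukey inequalities separately, and then to read off the ``therefore'' clause by symmetry. Throughout I regard $\mathcal{F}\cdot\mathcal{G}$ as a filter on $\om\times\om$ ordered by $\contains$, and for $A\sse\om\times\om$ and $i\in\om$ I write $A_i=\{j:(i,j)\in A\}$ for the $i$-th vertical fibre, so that $A\in\mathcal{F}\cdot\mathcal{G}$ exactly when $\{i:A_i\in\mathcal{G}\}\in\mathcal{F}$.

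For the direction $\mathcal{F}\times\mathcal{G}\times\om^{\om}\le_T\mathcal{F}\cdot\mathcal{G}$ I would show that $\mathcal{F}\cdot\mathcal{G}$ lies Tukey above each of the three factors and then invoke the least-upper-bound property of the cartesian product (part (4) of the Fact on cartesian and Fubini products, which holds with any directed poset in place of $\mathcal{W}$): for $\mathcal{F}$, the monotone map $A\mapsto\{i:A_i\in\mathcal{G}\}$ has cofinal image (it sends $U\times\om$ to $U$), so it is cofinal by Fact~\ref{fact.monotonemap}; for $\mathcal{G}$, the map $A\mapsto A_{i(A)}$ with $i(A)=\min\{i:A_i\in\mathcal{G}\}$ is cofinal, because any cofinal $Y\sse\mathcal{F}\cdot\mathcal{G}$ must for each $V\in\mathcal{G}$ contain some $B\sse\om\times V$, whence $B_{i(B)}\sse V$; and for $\om^{\om}$, enumerating $\{i:A_i\in\mathcal{G}\}$ increasingly as $\lgl i_n:n<\om\rgl$ (this set is infinite since $\mathcal{F}$ is nonprincipal), the map $A\mapsto\lgl\min(A_{i_n}):n<\om\rgl$ is cofinal into $(\om^{\om},\le)$: for nondecreasing $g$, the set $\{(i,j):j\ge g(i)\}$ belongs to $\mathcal{F}\cdot\mathcal{G}$, so any cofinal $Y$ contains a $B$ below it, and then $\min(B_{i_n})\ge g(i_n)\ge g(n)$ for all $n$. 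None of this uses the p-filter hypothesis.

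The content of the theorem is the reverse inequality $\mathcal{F}\cdot\mathcal{G}\le_T\mathcal{F}\times\mathcal{G}\times\om^{\om}$, and this is where $\mathcal{G}$ being a p-filter is used. I would produce an explicit monotone cofinal map $\Theta:\mathcal{F}\times\mathcal{G}\times\om^{\om}\to\mathcal{F}\cdot\mathcal{G}$, namely
$$\Theta(F,G,f)=\bcup_{i\in F}\{i\}\times(G\setminus f(i)),$$
where $G\setminus f(i)$ abbreviates $\{j\in G:j\ge f(i)\}$. Each fibre $G\setminus f(i)$ is $G$ with finitely many points removed, hence in $\mathcal{G}$ (a nonprincipal filter contains every cofinite subset of $\om$), so $\Theta(F,G,f)\in\mathcal{F}\cdot\mathcal{G}$; monotonicity is immediate, since enlarging $F$ and $G$ and decreasing $f$ only enlarges the union. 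By Fact~\ref{fact.monotonemap} it then suffices to check that the image of $\Theta$ is cofinal in $(\mathcal{F}\cdot\mathcal{G},\contains)$, i.e. that every $A\in\mathcal{F}\cdot\mathcal{G}$ contains some $\Theta(F,G,f)$. Put $F=\{i:A_i\in\mathcal{G}\}\in\mathcal{F}$; then $\{A_i:i\in F\}$ is a countable subfamily of $\mathcal{G}$, so the p-filter property gives a single $G\in\mathcal{G}$ with $G\sse^* A_i$ for every $i\in F$; setting $f(i)=1+\max(G\setminus A_i)$ for $i\in F$ (and $f(i)=0$ otherwise) forces $G\setminus f(i)\sse A_i$, hence $\Theta(F,G,f)\sse A$.

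Putting the two inequalities together yields $\mathcal{F}\cdot\mathcal{G}\equiv_T\mathcal{F}\times\mathcal{G}\times\om^{\om}$. For the last assertion, if $\mathcal{F}$ is also a p-filter then the equivalence just proved applies verbatim with the roles of $\mathcal{F}$ and $\mathcal{G}$ exchanged, giving $\mathcal{G}\cdot\mathcal{F}\equiv_T\mathcal{G}\times\mathcal{F}\times\om^{\om}$; since $\mathcal{F}\times\mathcal{G}\times\om^{\om}$ and $\mathcal{G}\times\mathcal{F}\times\om^{\om}$ are isomorphic posets, hence Tukey equivalent, we conclude $\mathcal{F}\cdot\mathcal{G}\equiv_T\mathcal{G}\cdot\mathcal{F}$. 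The one genuinely clever step, and the only place the hypothesis is needed, is the choice of $\Theta$: the p-filter property only delivers mod-finite containment $G\sse^* A_i$, and the purpose of the $\om^{\om}$-coordinate is exactly to absorb, uniformly in $i$, the finitely many exceptions through the shift function $f$. Everything else is bookkeeping with facts already recorded in the excerpt; the main obstacle in writing it up cleanly is keeping the $\contains$/$\sse$ directions straight between the three cofinal-map arguments of the easy direction.
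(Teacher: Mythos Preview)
Your argument is correct. Note, however, that the present paper is a survey and does not supply its own proof of this theorem; it merely states the result and cites Milovich~\cite{Milovich12}. There is therefore nothing in the paper to compare against.

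That said, your proof is essentially the natural one and matches Milovich's. The map $\Theta(F,G,f)=\bigcup_{i\in F}\{i\}\times(G\setminus f(i))$ is exactly the right construction, and your explanation of why the $\om^{\om}$-coordinate is needed---to absorb the finite discrepancies left over from the p-filter pseudo-intersection---identifies the one genuine idea in the argument. Two minor remarks: for the reduction $\mathcal{G}\le_T\mathcal{F}\cdot\mathcal{G}$, your map $A\mapsto A_{i(A)}$ is not monotone, but you correctly verify cofinality directly rather than via Fact~\ref{fact.monotonemap}; alternatively one could simply observe that $V\mapsto\om\times V$ is a Tukey (unbounded) map from $\mathcal{G}$ into $\mathcal{F}\cdot\mathcal{G}$. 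And your appeal to the least-upper-bound property of the cartesian product does, as you note, require that $\mathcal{F}\cdot\mathcal{G}$ be directed, which it is since it is a filter.
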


\section{Strictly below the maximum Tukey type}\label{sec.<top}

As Isbell  showed, there are always (in ZFC) ultrafilters with the maximum Tukey type among directed partial orders of cardinality continuum.  
In this section we investigate conditions which guarantee that an ultrafilter  be  Tukey non-maximal.
We begin with the following combinatorial characterization 
of the maximum type, which
was pointed out to us by \Todorcevic, and is proved in Fact 12 in \cite{Dobrinen/Todorcevic11}.
Let $\mathcal{U}$ be an ultrafilter.
$(\mathcal{U},\contains)\equiv_T([\mathfrak{c}]^{<\om},\sse)$ if and only if there is a subset $\mathcal{X}\sse\mathcal{U}$ such that $|\mathcal{X}|=\mathfrak{c}$ and for each infinite $\mathcal{Y}\sse\mathcal{X}$,
$\bigcap\mathcal{Y}\not\in\mathcal{U}$.
Taking the contrapositive, we find the following combinatorial characterization of not having maximum Tukey type.

\begin{fact}\label{fact.Tukeytopchar}
  $(\mathcal{U},\contains)<_T([\mathfrak{c}]^{<\om},\sse)$ if and only if for each subset $\mathcal{X}\sse\mathcal{U}$ of cardinality $\mathfrak{c}$, there is an infinite subset $\{Y_n:n<\om\}\in[\mathcal{X}]^{\om}$ such that
$\bigcap_{n<\om}Y_n\in\mathcal{U}$.
\end{fact}
This characterization aids greatly in the study of ultrafilters which are not Tukey maximum, as we shall see below.

 Milovich was the first to begin a serious investigation the structure of ultrafilters below the Tukey top.
The partial ordering $\contains$ on an ultrafilter $\mathcal{U}$ corresponds to considering the members of $\mathcal{U}$ as representing a neighborhood base of $\mathcal{U}$ in  $\beta\om$, the Stone-\v{C}ech compactification of $\om$.

\begin{thm}[Milovich, Theorem 3.11 in \cite{Milovich08}]\label{thm.Mildiamond}
Assume $\lozenge(E^{\mathfrak{c}}_{\om})$ and $\mathfrak{p}=\mathfrak{c}$.
Then there exists a nonprincipal ultrafilter $\mathcal{U}$ such that $\mathcal{U}$ is not a p-point and $(\mathfrak{c},\le)<_T(\mathcal{U},\contains)<_T([\mathfrak{c}]^{<\om},\sse)$.
\end{thm}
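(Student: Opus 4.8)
The plan is to build the ultrafilter $\mathcal{U}$ by a transfinite recursion of length $\mathfrak{c}$, using $\lozenge(E^{\mathfrak{c}}_{\om})$ to guess, at appropriate stages of cofinality $\om$, a potential cofinal map (equivalently, by Fact~\ref{fact.monotone}, a monotone cofinal map) from $([\mathfrak{c}]^{<\om},\sse)$ into $(\mathcal{U},\contains)$, and to kill it before it can be realized. Concretely, I would enumerate a base for $\mathcal{U}$ as $\lgl A_\al : \al<\mathfrak{c}\rgl$ with $A_{\al+1}\sse^* A_\al$ along each step, and at limit stages $\delta$ of cofinality $\om$ use $\mathfrak{p}=\mathfrak{c}$ to find a pseudointersection $A_\delta\sse^*A_\al$ for all $\al<\delta$; at all stages I also diagonalize against the $\al$-th subset of $\om$ in some fixed enumeration so that the final family generates an ultrafilter. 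The diamond sequence on $E^{\mathfrak{c}}_{\om}$ provides, at club-many $\delta$, a guess $f_\delta$ for a monotone map $[\mathfrak{c}]^{<\om}\to\mathcal{P}(\om)$ defined using only ordinals $<\delta$; a standard reflection argument shows that any actual monotone cofinal map $f:[\mathfrak{c}]^{<\om}\to\mathcal{U}$ agrees with the diamond guess on a stationary (in particular nonempty) set of such $\delta$, so it suffices to defeat each guessed $f_\delta$ at stage $\delta$.

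The key step is the defeating move. Given the guess $f_\delta$ at stage $\delta$, I would use the combinatorial characterization of Fact~\ref{fact.Tukeytopchar}: to prevent $\mathcal{U}\equiv_T([\mathfrak{c}]^{<\om},\sse)$ it is enough to arrange that for the witnessing family $\mathcal{X}=f_\delta''[\mathfrak{c}]^{<\om}$ there is an infinite $\{Y_n:n<\om\}\sse\mathcal{X}$ with $\bigcap_n Y_n\in\mathcal{U}$. So at stage $\delta$ I would pick countably many finite sets $s_n\in[\delta]^{<\om}$, set $Y_n=f_\delta(s_n)$, and commit the construction (via the pseudointersection $A_\delta$, shrinking it further) so that $A_\delta\sse^*\bigcap_{n<\om}Y_n$ whenever the $Y_n$ are infinite and this intersection is cofinite-compatible with the already-chosen $A_\al$'s; if instead some $Y_n=f_\delta(s_n)$ fails to be in (the filter generated so far together with) $\mathcal{U}$, then $f_\delta$ is not a map into $\mathcal{U}$ and nothing needs to be done. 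One has to be slightly careful to choose the $s_n$ so that $\{Y_n\}$ is genuinely infinite as a subset of $\mathcal{X}$ (e.g.\ by ensuring the $Y_n$ are distinct, using that a monotone cofinal map on $[\mathfrak{c}]^{<\om}$ cannot be eventually constant), and to interleave this with the bookkeeping that makes $\mathcal{U}$ an ultrafilter and keeps all the $A_\al$'s forming a decreasing-mod-finite tower. Finally, to get $(\mathfrak{c},\le)<_T(\mathcal{U},\contains)$ strictly, I would either arrange $\mathcal{U}$ to have character $\mathfrak{c}$ (so $\cf(\mathcal{U},\contains)=\mathfrak{c}>\om=\cf(\om^\om\text{-chain})$… more simply, ensure no countable subfamily of the $A_\al$ is cofinal, which the tower construction gives automatically since $A_{\al+1}\subsetneq^* A_\al$), and to get that $\mathcal{U}$ is \emph{not} a p-point I would reserve one step to diagonalize in the wrong direction: fix a partition of $\om$ into infinite pieces $\lgl I_k:k<\om\rgl$, put each $\om\setminus I_k$ into $\mathcal{U}$, and ensure no $A\in\mathcal{U}$ is almost contained in all $\om\setminus I_k$, i.e.\ the sequence $\lgl \om\setminus(I_0\cup\dots\cup I_k):k<\om\rgl$ has no pseudointersection in $\mathcal{U}$; this is compatible with everything else because we never need those particular sets as $Y_n$'s.

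The main obstacle I expect is the \emph{compatibility of the defeating move with the tower/ultrafilter bookkeeping}: when I shrink $A_\delta$ to lie below $\bigcap_{n<\om}Y_n$, I must know this intersection is infinite and in fact belongs to the filter generated by $\{A_\al:\al<\delta\}$ together with the coded decisions, so that the recursion can continue; this is where $\mathfrak{p}=\mathfrak{c}$ and the choice of the $s_n$ interact delicately, and it is essentially the only place the proof can go wrong. Everything else — the reflection argument that diamond guesses catch all real cofinal maps, the verification via Fact~\ref{fact.Tukeytopchar} that catching them suffices, and the standard ultrafilter diagonalization — is routine. The non-p-point and non-minimality clauses are cheap add-ons once the spine of the construction is in place.
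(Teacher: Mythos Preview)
The survey does not itself prove this theorem, but it records the key structural fact about Milovich's argument: the ultrafilter he builds is a Fubini product of $\om$ many p-points. So his route is quite different from yours --- rather than diagonalizing a single ultrafilter on $\om$ against all potential witnesses to Tukey-maximality, one uses $\mathfrak{p}=\mathfrak{c}$ together with the diamond to construct p-points and then takes a Fubini limit; the failure to be a p-point is then automatic, and being strictly Tukey below $[\mathfrak{c}]^{<\om}$ is argued from the Fubini structure (what this survey later isolates as ``basically generated'').

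Your plan, as written, breaks for a reason more basic than the compatibility obstacle you flag. You propose a base $\lgl A_\al:\al<\mathfrak{c}\rgl$ with $A_{\al+1}\sse^* A_\al$ and $A_\delta$ a pseudointersection of the earlier terms at limits; but an ultrafilter with a $\sse^*$-decreasing tower of length $\mathfrak{c}$ as a base is automatically a p-point whenever $\cf(\mathfrak{c})>\om$, and $\mathfrak{p}=\mathfrak{c}$ forces $\mathfrak{c}$ to be regular. Concretely, your non-p-point add-on --- put all $\om\setminus I_k$ into $\mathcal{U}$ while denying them a pseudointersection in $\mathcal{U}$ --- is incompatible with the tower: the sets $K_\al=\{k: |A_\al\cap I_k|=\aleph_0\}$ form a $\sse$-decreasing $\mathfrak{c}$-sequence of subsets of $\om$, hence eventually stabilize; if the stable value is $\emptyset$ then the tail of the tower already pseudointersects $\{\om\setminus I_k\}$, and if it is nonempty then some $\om\setminus I_k$ never contains any $A_\al$ and so never enters $\mathcal{U}$. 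Thus you cannot both carry a tower base and defeat the p-point property; the construction must be organized around a genuinely non-linearly-ordered base from the outset, which is exactly what the Fubini-product shape provides. (There is also a smaller slip: a \emph{cofinal} map $[\mathfrak{c}]^{<\om}\to\mathcal{U}$ would witness $\mathcal{U}\le_T[\mathfrak{c}]^{<\om}$, which always holds; what you mean to guess and kill are \emph{Tukey} maps in that direction, or equivalently the families $\mathcal{X}$ of Fact~\ref{fact.Tukeytopchar}.)
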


We now turn our attention to properties of ultrafilters which guarantee that they are not Tukey maximal.
If one examines the construction in the proof of Theorem \ref{thm.Mildiamond}, one finds that the  non-top ultrafilter  which Milovich  constructs is actually a Fubini product of $\om$ many p-points, and hence is basically generated (see Definition \ref{defn.bg} below).
The following special notion of partial ordering was introduced by Solecki and \Todorcevic\ in Section 3 of  \cite{Solecki/Todorcevic04}.

\begin{defn}[\cite{Solecki/Todorcevic04}]\label{defn.basic}
Let $D$ be a separable metric space and let $\le$ be a partial ordering on $D$.  We say that $(D,\le)$ is \em basic \rm if
\begin{enumerate}
\item
Each pair of elements of $D$ has the least upper bound with respect to $\le$ and the binary operation of least upper bound from $D\times D$ to $D$ is continuous;
\item
Each bounded sequence has a converging subsequence;
\item
Each converging sequence has a bounded subsequence.
\end{enumerate}
\end{defn}

Each ultrafilter is a separable metric space using the metric inherited from $\mathcal{P}(\om)$, where $\mathcal{P}(\om)$ is viewed as the Cantor space by identifying subsets of $\om$ with their characteristic functions on domain $\om$.
In this context, a sequence $(U_n)_{n<\om}$ of elements of $\mathcal{P}(\om)$ is said to {\em converge} to $U\in\mathcal{P}(\om)$ if and only if for each $m$ there is some $k$ such that for each $n\ge k$, $U_n\cap m=U\cap m$.
Since every bounded subset of an ultrafilter has a convergent subsequence, an ultrafilter is basic (in the general sense of a partially ordered set) if and only if (3) holds.
Hence,
an ultrafilter $\mathcal{U}$ is basic
if  and only if for every countable collection $U_n\in\mathcal{U}$ which converges
to some member $U$ in $\mathcal{U}$,
there is a subsequence $(n_k)_{k<\om}$ such that $\bigcap_{k<\om}U_{n_k}$ is a member of $\mathcal{U}$.
The next fact  is immediate from the definition and Fact \ref{fact.Tukeytopchar}.

\begin{fact}\label{fact.basicnottop}
Every basic ultrafilter is strictly below the maximum Tukey type.
\end{fact}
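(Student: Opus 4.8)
The plan is to verify the right-hand side of the combinatorial characterization of being Tukey non-maximal in Fact~\ref{fact.Tukeytopchar}. So let $\mathcal{X}\sse\mathcal{U}$ with $|\mathcal{X}|=\mathfrak{c}$; I must produce a countably infinite $\{Y_n:n<\om\}\in[\mathcal{X}]^{\om}$ with $\bigcap_{n<\om}Y_n\in\mathcal{U}$. The first instinct is to use the sequential compactness of the Cantor space $\mathcal{P}(\om)$ to extract a convergent sequence of distinct elements of $\mathcal{X}$ and then feed it to the basicness hypothesis. The one genuine subtlety is that the limit of such a sequence need not lie in $\mathcal{U}$, whereas the characterization of basic recorded just before Fact~\ref{fact.basicnottop} yields a subsequence with intersection in $\mathcal{U}$ only when the limit itself is in $\mathcal{U}$.

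The way around this is to insist that the limit point be a member of $\mathcal{X}$. As a subspace of the separable metric space $\mathcal{P}(\om)$, the set $\mathcal{X}$ is itself separable and metrizable, and since it is uncountable it cannot be discrete (a discrete separable metric space is countable); hence $\mathcal{X}$ has a point $U$ which is not isolated in $\mathcal{X}$. Because $\mathcal{X}$ is metric, $U$ is then the limit of a sequence $(U_n)_{n<\om}$ of pairwise distinct points of $\mathcal{X}\setminus\{U\}$: building the sequence recursively, at stage $n$ one may pick $U_n\in\mathcal{X}$ different from $U,U_0,\dots,U_{n-1}$ with $U_n\cap n=U\cap n$, since each basic neighborhood of $U$ in $\mathcal{X}$ is infinite; this forces $(U_n)$ to converge to $U$ in the sense defined above. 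The key gain is that $U\in\mathcal{X}\sse\mathcal{U}$, so $(U_n)$ is a sequence of elements of $\mathcal{U}$ converging to a \emph{member} $U$ of $\mathcal{U}$.

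Now the basicness of $\mathcal{U}$ applies verbatim: there is a subsequence $(n_k)_{k<\om}$ with $\bigcap_{k<\om}U_{n_k}\in\mathcal{U}$. Setting $\{Y_n:n<\om\}=\{U_{n_k}:k<\om\}$, this is a countably infinite subset of $\mathcal{X}$ whose intersection is in $\mathcal{U}$, which is precisely the condition in Fact~\ref{fact.Tukeytopchar}; therefore $(\mathcal{U},\contains)<_T([\mathfrak{c}]^{<\om},\sse)$. I expect the only point requiring care to be the one flagged in the first paragraph — arranging for the limit of the extracted sequence to sit inside $\mathcal{U}$, which is handled by choosing it to be a non-isolated point of $\mathcal{X}$ rather than an arbitrary accumulation point; the remaining topology (non-isolated points of uncountable separable metric spaces, the basic neighborhoods of $\mathcal{P}(\om)$) is entirely routine.
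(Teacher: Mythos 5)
Your proof is correct and is essentially the argument the paper has in mind: the paper dismisses this fact as immediate from the definition of basic together with the combinatorial characterization in Fact~\ref{fact.Tukeytopchar}, and your write-up is exactly that verification. The one point needing care --- that the extracted convergent sequence from a size-$\mathfrak{c}$ set $\mathcal{X}\sse\mathcal{U}$ must have its limit in $\mathcal{U}$ --- is correctly handled by taking a non-isolated point of $\mathcal{X}$ itself as the limit.
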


The next theorem shows that in fact, basic ultrafilters have already been widely studied, simply under the name of p-point.

\begin{thm}[Dobrinen/\Todorcevic, Theorem 14 in \cite{Dobrinen/Todorcevic11}] \label{thm.stevosummer}
An ultrafilter is basic if and only if it is a p-point.
\end{thm}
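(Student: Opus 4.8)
The plan is to prove both implications separately, using the reformulation of ``basic'' for ultrafilters already recorded in the excerpt: $\mathcal{U}$ is basic if and only if whenever $(U_n)_{n<\om}$ is a sequence of members of $\mathcal{U}$ converging (in the Cantor-space topology on $\mathcal{P}(\om)$) to some $U\in\mathcal{U}$, there is a subsequence $(n_k)_{k<\om}$ with $\bigcap_{k<\om}U_{n_k}\in\mathcal{U}$.

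First I would show that every p-point is basic. Suppose $\mathcal{U}$ is a p-point and let $(U_n)_{n<\om}\sse\mathcal{U}$ converge to $U\in\mathcal{U}$. Since $\mathcal{U}$ is a p-point, there is $V\in\mathcal{U}$ with $V\sse^* U_n$ for every $n$; replacing $V$ by $V\cap U$ we may assume $V\sse^* U$ as well. The idea is to intersect $V$ with a sufficiently fast-growing tail selection: for each $n$ the set $V\setminus U_n$ is finite, so choose an increasing sequence $(n_k)$ together with an increasing sequence of ``heights'' so that the finitely many points of $V$ below a given height are eventually inside every $U_{n_k}$ with $k$ large, using convergence $U_{n}\cap m\to U\cap m$ and $V\sse^* U$ to control the bottom part, and $V\sse^* U_{n_k}$ to control the rest. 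Concretely, one shows $V':=V\cap\bigcap_k U_{n_k}$ differs from $V$ by only finitely much, so $V'\in\mathcal{U}$, witnessing basicness. (If one prefers, one can instead invoke the standard fact that p-points satisfy the stronger ``selective-for-convergent-sequences'' property; but the $\sse^*$-pseudointersection argument is cleanest here.)

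Conversely I would show that every basic ultrafilter is a p-point, arguing by contraposition. Suppose $\mathcal{U}$ is not a p-point, so there is a decreasing sequence $U_0\supseteq U_1\supseteq\cdots$ of members of $\mathcal{U}$ with no pseudointersection in $\mathcal{U}$; we may assume $\bigcap_n U_n=\emptyset$ (shrink so that $\min(U_n)\to\infty$ on the complement, or pass to $U_n\setminus n$). Now I want to manufacture a convergent sequence in $\mathcal{U}$ no subsequence of which has intersection in $\mathcal{U}$. The natural candidate is the sequence $W_n := U_n \cup (\om\setminus U_0)$ or, better, a sequence obtained by ``filling in the low part'': set $W_n = U_n \cup \{j : j < a_n\}$ where $a_n\to\infty$ is chosen so that $W_n\to\om$. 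Each $W_n\in\mathcal{U}$ since $W_n\supseteq U_n$, and $W_n\to\om\in\mathcal{U}$. For any subsequence $(n_k)$, $\bigcap_k W_{n_k}$ is almost equal to $\bigcap_k U_{n_k}$ up to a finite set (controlled by $a_{n_0}$), hence is a pseudointersection of the $U_n$'s; since no such pseudointersection lies in $\mathcal{U}$, $\bigcap_k W_{n_k}\notin\mathcal{U}$. So $\mathcal{U}$ is not basic.

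The main obstacle is the bookkeeping in the forward direction: one must simultaneously diagonalize against the ``how much of $V$ has entered $U_{n_k}$'' requirement and the ``$V$ agrees with $U$ below height $m$'' requirement, choosing the subsequence $(n_k)$ and a matching sequence of heights in tandem so that the finitely many exceptional points accumulate to a finite set overall rather than an infinite one. This is entirely routine once set up correctly, but it is the step where care is needed; the converse direction, by contrast, is essentially a one-line observation once the right convergent sequence is written down. I would also remark explicitly (as the excerpt's surrounding discussion invites) that combining this with Fact~\ref{fact.basicnottop} re-derives the Dobrinen--\Todorcevic\ result that p-points are never of maximum Tukey type.
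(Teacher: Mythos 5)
Your contrapositive direction (basic $\Rightarrow$ p-point) is fine: the sequence $W_n=U_n\cup\{j:j<a_n\}$ with $a_n\to\infty$ does converge to $\om\in\mathcal{U}$, and the intersection of any subsequence is, modulo a finite set, contained in every $U_n$, so it cannot lie in $\mathcal{U}$; this is exactly the intended construction (the survey even remarks that basicness only needs to be tested on sequences converging to $\om$). The gap is in the forward direction, and it is not mere bookkeeping: the specific goal you set yourself --- choose $(n_k)$ so that $V':=V\cap\bigcap_{k}U_{n_k}$ differs from $V$ by only a finite set --- is unattainable in general. Concretely, let $A=\{a_0<a_1<\cdots\}$ be a set in $\mathcal{U}$ with infinite complement, put $U=\om$ and $U_n=\om\setminus\{a_n\}$. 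Then $U_n\to U$, and $V=A$ is a perfectly legitimate output of the p-point property (here $V\setminus U_n=\{a_n\}$), yet for \emph{every} infinite subsequence $(n_k)$ the lost set $V\setminus\bigcap_k U_{n_k}=\{a_{n_k}:k<\om\}$ is infinite. The trouble is structural: convergence lets you force $U_{n_{k+1}}$ to agree with $U$ below any height fixed so far, but the finite exceptional set $V\setminus U_{n_{k+1}}$ then sits \emph{above} that height, and these sets can be nonempty at every step, so their union need not be finite no matter how you interleave heights and indices.

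What rescues the theorem is that the lost set only needs to be $\mathcal{U}$-small, not finite, and arranging that takes a second use of the ultrafilter property, which is the idea missing from your sketch. For instance: with $V\sse U$ a pseudointersection, recursively choose $n_k$ and heights $m_0<m_1<\cdots$ so that $U_{n_{k}}$ agrees with $U$ below $m_{k-1}$ and $m_k>\max(V\setminus U_{n_k})$; then the finite sets $F_k:=V\setminus U_{n_k}$ lie in the pairwise disjoint intervals $[m_{k-1},m_k)$. If $\bigcup_k F_k\notin\mathcal{U}$, then $V\setminus\bigcup_k F_k\in\mathcal{U}$ is contained in every $U_{n_k}$ and you are done; otherwise one of $\bigcup_{k\ \mathrm{even}}F_k$, $\bigcup_{k\ \mathrm{odd}}F_k$ belongs to $\mathcal{U}$, and by disjointness that set is contained in $U_{n_k}$ for every $k$ of the \emph{opposite} parity, so the complementary sub-subsequence witnesses basicness. (In the counterexample above this amounts to choosing $(n_k)$ with $\{a_{n_k}:k<\om\}\notin\mathcal{U}$.) With this splitting/discarding step inserted, your argument is correct and follows the same pseudointersection-plus-convergence route as the cited proof of Dobrinen and \Todorcevic; without it, the key claim of the forward direction is simply false as stated.
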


Thus, p-points are always below the top Tukey type.

We pause here to point out another important aspect of Tukey theory.
It turns out that Tukey reducibility is exactly the right notion to characterize p-points from among $\mathfrak{c}$-OK points.

It follows from  Proposition 3.7 in \cite{Milovich08} 
that 
if $\mathcal{U}$ is $\mathfrak{c}$-OK and not a p-point, then $(\mathcal{U},\contains)\equiv_T([\mathfrak{c}]^{<\om},\sse)$.
Combining this result  with the above fact that p-points are never Tukey maximum,
we obtain the following characterization of when a $\mathfrak{c}$-OK point is actually a p-point.

\begin{cor}
Let $\mathcal{U}$ be  a $\mathfrak{c}$-OK ultrafilter.
Then $\mathcal{U}$
is a p-point if and only if 
$(\mathcal{U},\contains)<_T([\mathfrak{c}]^{<\om},\sse)$.
\end{cor}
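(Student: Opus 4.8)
The plan is to prove both directions of the biconditional, each of which is essentially a citation to a result already on display in the excerpt.

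\medskip

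First I would prove the forward direction: if $\mathcal{U}$ is a $\mathfrak{c}$-OK ultrafilter that is a p-point, then $(\mathcal{U},\contains)<_T([\mathfrak{c}]^{<\om},\sse)$. This is immediate from Theorem \ref{thm.stevosummer}, which identifies p-points with basic ultrafilters, together with Fact \ref{fact.basicnottop}, which says every basic ultrafilter is strictly below the maximum Tukey type. The hypothesis of being $\mathfrak{c}$-OK is not even needed here; I would simply remark that this half holds for arbitrary p-points.

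\medskip

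Next I would prove the contrapositive of the reverse direction: if $\mathcal{U}$ is $\mathfrak{c}$-OK and is \emph{not} a p-point, then $(\mathcal{U},\contains)\equiv_T([\mathfrak{c}]^{<\om},\sse)$. This is exactly the statement attributed above to Proposition 3.7 of \cite{Milovich08}. So the work is to unwind why that proposition gives what we want. The idea is that $\mathfrak{c}$-OK-ness supplies, for each decreasing sequence $U_n\in\mathcal{U}$ one might extract, a $\mathfrak{c}$-sized family $\langle V_\al:\al<\mathfrak{c}\rangle\sse\mathcal{U}$ whose finite intersections are each almost contained in the corresponding $U_n$; failure of the p-point property provides a witnessing sequence $\{U_n:n<\om\}$ with no pseudointersection in $\mathcal{U}$, and then the family $\{V_\al:\al<\mathfrak{c}\}$ has the property that every infinite subfamily has intersection not in $\mathcal{U}$ (an infinite subfamily contains arbitrarily large finite subfamilies, whose intersections are $\sse^*$ the $U_n$, and these shrink out of $\mathcal{U}$). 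By the combinatorial characterization of the maximum type stated just before Fact \ref{fact.Tukeytopchar} (the subset $\mathcal{X}\sse\mathcal{U}$ of size $\mathfrak{c}$ with no infinite subfamily having intersection in $\mathcal{U}$), this places $(\mathcal{U},\contains)$ at the Tukey maximum, and in particular not strictly below it.

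\medskip

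The main obstacle is not a deep one: it is simply keeping the quantifier bookkeeping in the $\mathfrak{c}$-OK argument straight — ensuring that the single $\mathfrak{c}$-sequence produced by the definition works simultaneously against \emph{all} the $U_n$, so that an arbitrary infinite subfamily (not just one indexed in a convenient way) has intersection outside $\mathcal{U}$. Combining the two directions then yields the corollary: $\mathcal{U}$ is a p-point iff it is not Tukey maximum iff $(\mathcal{U},\contains)<_T([\mathfrak{c}]^{<\om},\sse)$.
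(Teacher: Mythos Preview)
Your proposal is correct and follows essentially the same approach as the paper: the forward direction via Theorem~\ref{thm.stevosummer} and Fact~\ref{fact.basicnottop}, and the contrapositive of the reverse direction via Milovich's Proposition~3.7 in \cite{Milovich08}. You even supply a correct sketch of Milovich's argument (using the $\mathfrak{c}$-OK sequence against a witness to non-p-pointness to produce the size-$\mathfrak{c}$ family required by the characterization of the Tukey maximum), which the paper itself merely cites.
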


One can glean from the proof of Theorem \ref{thm.stevosummer}  that an ultrafilter is basic if and only if every sequence which converges to $\om$ has a bounded subsequence.
Taking the main idea from the definition of basic, namely that convergent sequences have a bounded subsequence, and relativizing this notion to a base, we obtain the weaker notion a basically generated ultrafilter.

\begin{defn}[Definition 15 in \cite{Dobrinen/Todorcevic11}] \label{defn.bg}
An ultrafilter $\mathcal{U}$  is \em basically generated \rm if there is  a filter basis $\mathcal{B}\sse\mathcal{U}$ (i.e.\ $\forall A\in\mathcal{U}$ $\exists B\in\mathcal{B}$ $B\sse A$) 
with the property that each sequence 
$\{A_n:n<\om\}\sse\mathcal{B}$ converging to an element of $\mathcal{B}$
has a subsequence $\{A_{n_k}:k<\om\}$ such that $\bigcap_{k<\om}A_{n_k}\in\mathcal{U}$.
\end{defn}

Since every basic ultrafilter is basically generated, it automatically holds that every p-point is basically generated.  In fact, the class of basically generated ultrafilters contains many non-p-points.

\begin{thm}[Dobrinen/\Todorcevic, Theorem 16 in \cite{Dobrinen/Todorcevic11}]\label{thm.2}
The collection of basically generated ultrafilters contains all p-points and all countable iterations of Fubini products of p-points.
Furthermore, it contains all countable iterations of Fubini products of basically generated ultrafilters with witnessing base which is closed under finite intersections.
\end{thm}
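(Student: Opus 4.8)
The plan is to prove the two assertions of Theorem \ref{thm.2} in stages, building up from the single-step Fubini product to the countable iteration. First I would isolate the key closure property: \textbf{if $\mathcal{U}$ is an ultrafilter on $\om$ and each $\mathcal{U}_n$ ($n<\om$) is basically generated with a witnessing base $\mathcal{B}_n$ closed under finite intersections, then $\lim_{n\to\mathcal{U}}\mathcal{U}_n$ is basically generated.} The natural candidate for the witnessing base $\mathcal{B}$ of the Fubini product is the collection of sets of the form $A = \bigcup_{n\in S}(\{n\}\times B_n^A)$, where $S\in\mathcal{U}$ and each $B_n^A\in\mathcal{B}_n$ (for $n\in S$). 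This is a filter basis for $\lim_{n\to\mathcal{U}}\mathcal{U}_n$ by a routine unravelling of the definition of the Fubini product, and it is closed under finite intersections precisely because each $\mathcal{B}_n$ is (this is where the extra hypothesis in the second sentence is used, and why p-points — whose entire filter is a witnessing base closed under intersections — work without it).

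The heart of the argument is the convergence-to-bounded-subsequence step. Suppose $\{A^k : k<\om\}\sse\mathcal{B}$ converges to some $A\in\mathcal{B}$, where $A^k = \bigcup_{n\in S^k}(\{n\}\times B_n^k)$ and $A=\bigcup_{n\in S}(\{n\}\times B_n)$. Convergence in the Cantor space on $\om\times\om$ means that on each fixed column $\{n\}\times\om$, eventually $A^k$ agrees with $A$; in particular for each $n$, eventually $n\in S^k \Llra n\in S$ and eventually $B_n^k\cap m = B_n\cap m$ for larger and larger $m$. So I would first thin out to a subsequence (re-indexed again as $\{A^k\}$) and a set $S'\in\mathcal{U}$, $S'\sse S$, so that $n\in S^k$ for all $k\ge n$ and all $n\in S'$. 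Now for each $n\in S'$, the column sequence $(B_n^k)_{k\ge n}$ converges to $B_n\in\mathcal{B}_n$; since $\mathcal{U}_n$ is basically generated via $\mathcal{B}_n$, there is an infinite set $T_n\sse\om\setminus n$ with $\bigcap_{k\in T_n}B_n^k\in\mathcal{U}_n$. The task is to choose a single infinite $T\sse\om$ that works simultaneously for $\mathcal{U}$-many columns: for $n$ in some set of $\mathcal{U}$ we need $T\setminus n$ (or an infinite piece of it) to be contained in $T_n$, or at least to have $\bigcap_{k\in T}B_n^k\in\mathcal{U}_n$. A fusion/diagonalization handles this: build $T=\{k_0<k_1<\dots\}$ recursively, at stage $i$ ensuring $k_i$ lies in $\bigcap\{T_n : n\le i,\ n\in S'\}$ (a finite intersection of infinite sets, hence infinite) and $k_i>k_{i-1}$. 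Then for each $n\in S'$, all but finitely many elements of $T$ lie in $T_n$, so $\bigcap_{k\in T}B_n^k \contains (\text{finite intersection from }\mathcal{B}_n) \cap \bigcap_{k\in T, k\in T_n}B_n^k$, which is in $\mathcal{U}_n$ using that $\mathcal{B}_n$ is closed under finite intersections. Hence $\{n\in S' : \bigcap_{k\in T}B_n^k\in\mathcal{U}_n\}\contains S'\in\mathcal{U}$, and therefore $\bigcap_{k\in T}A^k \contains \bigcup_{n\in S'}(\{n\}\times\bigcap_{k\in T}B_n^k) \in \lim_{n\to\mathcal{U}}\mathcal{U}_n$, as required.

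With the one-step result in hand, I would handle countable iterations by induction along the construction tree. For a finite iteration this is immediate from the one-step lemma applied repeatedly, noting that at each stage the witnessing base produced above is again closed under finite intersections (so the hypothesis needed to iterate is preserved — this is the reason the statement is phrased with "witnessing base closed under finite intersections"). For genuinely countable iterations — say $\mathcal{W}=\lim_{n\to\mathcal{U}}\mathcal{W}_n$ where each $\mathcal{W}_n$ is itself the result of an iteration of length $<$ the total length — the same one-step lemma applies directly, since it only requires each factor $\mathcal{W}_n$ to be basically generated with a witnessing base closed under finite intersections, which holds by the induction hypothesis. Finally, for the first sentence of the theorem (p-points and countable Fubini iterations of p-points), one just observes that a p-point $\mathcal{U}$ is basically generated with $\mathcal{B}=\mathcal{U}$ itself as witnessing base, and $\mathcal{U}$ is trivially closed under finite intersections, so the iteration machinery applies verbatim.

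The main obstacle I anticipate is the simultaneous diagonalization in the middle paragraph: one must extract a \emph{single} infinite index set $T$ that witnesses bounded-subsequence-hood on $\mathcal{U}$-many columns at once, and care is needed because the sets $T_n$ guaranteed by basic generation of the individual $\mathcal{U}_n$ are only infinite, not in any filter, so their infinite intersection may be empty — which is exactly why the fusion argument must intersect only \emph{finitely many} $T_n$ at each stage and why one ultimately needs closure of the $\mathcal{B}_n$ under finite intersections to absorb the finitely many "bad" early terms of $T$. A secondary technical point is making sure the preliminary thinning of $\{A^k\}$ to align the index sets $S^k$ with $S$ is compatible with the subsequent fusion; this is routine but must be stated carefully so that the final $T$ is a subset of the already-thinned index set.
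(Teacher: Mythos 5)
Your one-step lemma is stated for an \emph{arbitrary} outer ultrafilter $\mathcal{U}$ (only the inner $\mathcal{U}_n$ are assumed basically generated), and in that form it is false, so the argument cannot be right as written: take $\mathcal{U}$ Tukey-maximal (Isbell) and all $\mathcal{U}_n$ equal to a fixed p-point $\mathcal{V}$; then $\lim_{n\ra\mathcal{U}}\mathcal{V}=\mathcal{U}\cdot\mathcal{V}\ge_T\mathcal{U}\equiv_T[\mathfrak{c}]^{<\om}$, whereas basically generated ultrafilters are strictly below the maximum (Theorem \ref{thm.4}). The theorem requires the outer ultrafilter to be basically generated as well, and its basic generation (together with closure of \emph{its} base under finite intersections) must genuinely be used. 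In your argument the omission surfaces at the final step: for $\bigcap_{k\in T}A^k$ to belong to the Fubini product you need $\mathcal{U}$-many columns $n$ that are present in \emph{every} $A^k$ with $k\in T$ --- if even one section $(A^k)_n$ is empty, the $n$-th column of the intersection is empty, hence not in $\mathcal{U}_n$. Your preliminary thinning only guarantees column $n$ from some stage $K_n$ on, with $K_n$ unbounded in $n$, so only the finitely many columns with $K_n\le\min T$ are guaranteed; the assertion that the set of good columns contains $S'$ is unjustified. The repair is to take the outer index sets of the product base from the witnessing base $\mathcal{B}$ of $\mathcal{U}$ (not from all of $\mathcal{U}$), pass to a subsequence along which the outer sets $S^k$ converge in $2^{\om}$ (sequential compactness), observe that the limit contains $S$, so that $S\cap S^k\in\mathcal{B}$ (here closure of $\mathcal{B}$ under finite intersections is used) converges to $S\in\mathcal{B}$, and then apply basic generation of $\mathcal{U}$ to get a further subsequence with $\bigcap_k (S\cap S^k)\in\mathcal{U}$; this produces $\mathcal{U}$-many columns present in all remaining terms, after which the column-by-column argument can run.

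The fusion itself is also broken as written: ``a finite intersection of infinite sets, hence infinite'' is false --- the sets $T_n$ obtained column by column are merely infinite and may be pairwise almost disjoint, so $\bigcap\{T_n:n\le i,\ n\in S'\}$ can already be empty at $i=1$ (you notice this danger in your last paragraph but the construction you give does not avoid it). The correct diagonalization builds \emph{nested} infinite index sets $T^{(0)}\contains T^{(1)}\contains\cdots$, where $T^{(i+1)}\sse T^{(i)}$ is obtained by applying basic generation of $\mathcal{U}_{n_{i+1}}$ to the subsequence indexed by $T^{(i)}$ (which still converges to $B_{n_{i+1}}$), and then takes a diagonal $T=\{k_i:i<\om\}$ with $k_i\in T^{(i)}$ increasing. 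For each column $n_i$ the tail $\{k_j:j\ge i\}\sse T^{(i)}$ gives an intersection in $\mathcal{U}_{n_i}$, and the finitely many earlier terms are absorbed because a finite intersection of members of the ultrafilter $\mathcal{U}_{n_i}$ is in $\mathcal{U}_{n_i}$ --- note this absorption does \emph{not} need $\mathcal{B}_n$ closed under intersections; that hypothesis is needed to keep the product base intersection-closed so the construction can be iterated, and (as above) on the outer coordinate. With these two repairs --- using basic generation of the outer ultrafilter and the nested-subsequence fusion --- your overall architecture (a one-step closure lemma whose output base is again intersection-closed, induction on the rank of the iteration, p-points as the base case with $\mathcal{B}=\mathcal{U}$) is the right one.
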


Since for any ultrafilter $\mathcal{U}$, its Fubini product with itself $\mathcal{U}\cdot\mathcal{U}$ is not a p-point, it follows that  there are basically generated ultrafilters which are not p-points.

The property of being basically generated was designed to retain enough strength of the property of being basic to
satisfy the combinatorial characterization  of Fact \ref{fact.Tukeytopchar}, and thus be below the top Tukey type.

\begin{thm}[Dobrinen/\Todorcevic, Theorem 33 in \cite{Dobrinen/Todorcevic11}]\label{thm.4}
If $\mathcal{U}$ is a basically generated ultrafilter on $\om$, then $\mathcal{U}<_T[\mathfrak{c}]^{<\om}$.
\end{thm}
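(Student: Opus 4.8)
The plan is to verify the combinatorial characterization of Fact~\ref{fact.Tukeytopchar}: given a basically generated ultrafilter $\mathcal{U}$ with witnessing filter basis $\mathcal{B}$, I must show that any $\mathcal{X}\sse\mathcal{U}$ with $|\mathcal{X}|=\mathfrak{c}$ contains an infinite subfamily with intersection in $\mathcal{U}$. First I would replace each $X\in\mathcal{X}$ by some $B_X\in\mathcal{B}$ with $B_X\sse X$; it suffices to find an infinite $\{B_{X_n}:n<\om\}$ whose intersection lies in $\mathcal{U}$, since that intersection is then contained in $\bigcap_n X_n$, and $\mathcal{U}$ is upward closed. So without loss of generality we may assume $\mathcal{X}\sse\mathcal{B}$ and $|\mathcal{X}|=\mathfrak{c}$.

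Next I would invoke compactness of the Cantor space $\mathcal{P}(\om)$: the set $\mathcal{X}$, being uncountable, has an accumulation point, i.e.\ there is a sequence $\{A_n:n<\om\}$ of distinct elements of $\mathcal{X}$ converging (in the Cantor-space topology, so in the sense of finite agreement) to some $A\in\mathcal{P}(\om)$. The key point is that $A$ must itself lie in $\mathcal{U}$: since $\mathcal{U}$ is an ultrafilter, either $A\in\mathcal{U}$ or $\om\setminus A\in\mathcal{U}$, and in the latter case some finite piece of $\om\setminus A$ is in... actually the cleaner route is that each $A_n\supseteq A\cap k_n$ for $k_n\to\infty$, so $A=\bigcap_n (A\cup(\om\setminus k_n))$ while $A_n$ agrees with $A$ below $k_n$; if $\om\setminus A\in\mathcal{U}$ then since the $A_n\in\mathcal{U}$ converge to $A$, for large $n$ we get $A_n\cap(\om\setminus A)$ is ``almost'' $(\om\setminus A)\setminus k_n$, which contradicts $A_n\in\mathcal{U}$ and $\om\setminus A\in\mathcal{U}$ only if $A$ is cofinite in its complement's complement... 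I need to be careful here. The robust argument: if the limit $A\notin\mathcal{U}$ then $\om\setminus A\in\mathcal{U}$; but then $\om\setminus A$ is infinite (as $\mathcal{U}$ is nonprincipal), and for each $m$ pick $n$ with $A_n\cap m = A\cap m$; it is not immediate that $A\in\mathcal{U}$ from this alone, so instead I would first \emph{thin} $\mathcal{X}$: since $|\mathcal{X}|=\mathfrak{c}>\om$, and $\mathcal{P}(\om)$ is second countable, $\mathcal{X}$ has a condensation point, but more usefully, a standard fact is that any uncountable subset of $\mathcal{U}$ has an infinite subset converging to an element of $\mathcal{U}$ --- this is exactly the observation recorded just before Definition~\ref{defn.bg}, that one may assume the limit is $\om$ after passing to a subsequence (by a tree/fusion argument choosing the $A_n$ to agree on longer and longer initial segments with a common member of $\mathcal{U}$). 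Granting that, we obtain a sequence $\{A_n:n<\om\}\sse\mathcal{X}\sse\mathcal{B}$ converging to some $A\in\mathcal{B}$.

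Now the definition of basically generated applies directly: the converging sequence $\{A_n:n<\om\}\sse\mathcal{B}$ with limit $A\in\mathcal{B}$ has a subsequence $\{A_{n_k}:k<\om\}$ with $\bigcap_k A_{n_k}\in\mathcal{U}$. Since $\{A_{n_k}:k<\om\}$ is an infinite subset of $\mathcal{X}$, and each $A_{n_k}$ was chosen below the corresponding $X_{n_k}\in\mathcal{X}_{\mathrm{orig}}$, we get $\bigcap_k X_{n_k}\supseteq \bigcap_k A_{n_k}\in\mathcal{U}$, so $\bigcap_k X_{n_k}\in\mathcal{U}$. By Fact~\ref{fact.Tukeytopchar} this shows $(\mathcal{U},\contains)<_T([\mathfrak{c}]^{<\om},\sse)$, i.e.\ $\mathcal{U}<_T[\mathfrak{c}]^{<\om}$.

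The main obstacle is the middle step: extracting from an arbitrary size-$\mathfrak{c}$ subfamily $\mathcal{X}\sse\mathcal{B}$ a \emph{countable} subsequence that both converges in the Cantor topology \emph{and} has its limit inside $\mathcal{B}$ (or at least inside $\mathcal{U}$), so that the hypothesis of being basically generated can be applied. The resolution is a fusion construction: build a finitely-branching tree of conditions, using at each level that $\mathcal{U}$ is a filter (so finitely many chosen sets still have a common member of $\mathcal{B}$ refining them) to keep the selected sets agreeing on ever-longer initial segments with a fixed generating set, which then forces convergence to that set; the pigeonhole principle at uncountable cofinality guarantees the tree can be continued. Once this is in place the rest is bookkeeping. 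I would present this fusion as the one genuinely substantive lemma and keep the surrounding deductions brief.
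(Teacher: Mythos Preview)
Your overall strategy --- verify Fact~\ref{fact.Tukeytopchar} by shrinking each $X\in\mathcal{X}$ to some $B_X\in\mathcal{B}$, extracting a sequence converging to an element of $\mathcal{B}$, and invoking the defining property of $\mathcal{B}$ --- is exactly the intended argument indicated in the paper.

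The step you flag as the ``main obstacle'' is, however, immediate and requires no fusion. After passing to $\mathcal{Y}=\{B_X:X\in\mathcal{X}\}\sse\mathcal{B}$ (which we may assume has size $\mathfrak{c}$, since if some $B\in\mathcal{B}$ equals $B_X$ for infinitely many $X$ we are already done), simply observe that in the second-countable space $2^{\om}$ any uncountable set has at most countably many isolated points. Hence some $B\in\mathcal{Y}$ is an accumulation point of $\mathcal{Y}$, so there is a sequence of pairwise distinct $B_n\in\mathcal{Y}$ converging to $B\in\mathcal{Y}\sse\mathcal{B}$. The definition of basically generated now applies directly to this sequence, and the rest of your write-up goes through.

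Your attempted justifications for this step are off target. The remark preceding Definition~\ref{defn.bg} concerns \emph{basic} ultrafilters (p-points) and says only that for those it suffices to test sequences converging to $\om$; it does not assert that uncountable families in $\mathcal{U}$ admit subsequences converging into $\mathcal{U}$. And the fusion you sketch does not work as written: taking common refinements in $\mathcal{B}$ (using that $\mathcal{B}$ is a filter base) produces \emph{new} elements of $\mathcal{B}$, but does nothing to force the already-selected members of $\mathcal{Y}$ to agree on long initial segments with anything. Drop the fusion and use the one-line second-countability argument above.
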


It is still unknown whether the classes of basically generated ultrafilters  and the iterated Fubini products of p-points coincide.

\begin{problem}[Question 26 in \cite{Dobrinen/Todorcevic11}]\label{prob.bg=Fubppoint?}
Is there a basically generated ultrafilter which is not (isomorphic to) some countable iteration of Fubini products of p-points?
\end{problem}

We now turn our attention to ultrafilters on base set $\FIN$.
Recall that $\FIN$ denotes the collection of all non-empty finite subsets of the natural numbers.
In \cite{Blass87}, Blass adapted a proof of Glazer to show the following.

\begin{thm}[Blass, Theorem 2.1, \cite{Blass87}]\label{thm.BlassGlazer}
Let $\mathcal{V}_0$ and $\mathcal{V}_1$ be a pair of nonprincipal ultrafilters on $\om$.
Then there is an idempotent ultrafilter $\mathcal{U}$ on $\FIN$ such that $\mathcal{U}_{\min}=\mathcal{V}_0$ and $\mathcal{U}_{\max}=\mathcal{V}_1$.
\end{thm}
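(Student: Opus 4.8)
The plan is to use the Glazer-style argument via the algebraic structure of the semigroup $(\beta\FIN,+)$, where the operation $+$ is the natural extension of the partial union operation on $\FIN$. First I would set up the semigroup: for $x,y\in\FIN$ with $\max(x)<\min(y)$, the union $x\cup y$ is defined, and this partial operation extends to a (total, right-topological) semigroup operation on $\beta\FIN$ by the standard recipe $\mathcal{A}+\mathcal{B}=\{S\sse\FIN : \{x : \{y : x\cup y\in S\}\in\mathcal{B}\}\in\mathcal{A}\}$, where one must check this is well-defined using that for $\mathcal{B}$ nonprincipal almost every $y$ (in the sense of $\mathcal{B}$) has $\min(y)$ above any fixed bound, so $x\cup y$ is eventually defined. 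The key structural fact is that the closed sub-semigroup of $\beta\FIN$ consisting of ultrafilters concentrating on sets of the form $[X]$ for block-sequences $X$ is nonempty, compact, and right-topological, hence by the Ellis--Namakura lemma contains an idempotent $\mathcal{U}=\mathcal{U}+\mathcal{U}$; such an idempotent is exactly a block-generated (ordered-union) ultrafilter in the sense of Definition \ref{def.block-gen}.

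Next, to control $\mathcal{U}_{\min}$ and $\mathcal{U}_{\max}$, I would work not in all of $\beta\FIN$ but inside a suitably chosen closed sub-semigroup pinned over $\mathcal{V}_0$ and $\mathcal{V}_1$. Specifically, consider the set $T$ of all ultrafilters $\mathcal{W}$ on $\FIN$ such that $\mathcal{W}_{\min}=\mathcal{V}_0$ and $\mathcal{W}_{\max}=\mathcal{V}_1$. The crucial observations are: (i) $T$ is nonempty (take any block-sequence-generated ultrafilter whose $\min$-projection is $\mathcal{V}_0$; one can arrange $\max$ along the way, or more carefully build $T$ so that it is automatically nonempty), (ii) $T$ is closed in $\beta\FIN$, and (iii) $T$ is closed under $+$: if $\mathcal{W},\mathcal{W}'\in T$ then $\min(x\cup y)=\min(x)$ and $\max(x\cup y)=\max(y)$ whenever $\max(x)<\min(y)$, so $(\mathcal{W}+\mathcal{W}')_{\min}=\mathcal{W}_{\min}=\mathcal{V}_0$ and $(\mathcal{W}+\mathcal{W}')_{\max}=\mathcal{W}'_{\max}=\mathcal{V}_1$. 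Thus $T$ is a compact right-topological semigroup, so it contains an idempotent $\mathcal{U}$, which is then block-generated with $\mathcal{U}_{\min}=\mathcal{V}_0$ and $\mathcal{U}_{\max}=\mathcal{V}_1$, as desired.

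The main obstacle, and the step requiring the most care, is establishing that $T$ is nonempty \emph{and} that the semigroup operation restricted to $T$ has the right topological properties (right-topological: the maps $\mathcal{W}\mapsto \mathcal{W}+\mathcal{W}_0$ are continuous). Nonemptiness of $T$ requires producing at least one ultrafilter on $\FIN$ projecting correctly under both $\min$ and $\max$; the natural approach is to first build an ordered sequence of finite blocks whose $\min$'s diagonalize $\mathcal{V}_0$ and whose $\max$'s diagonalize $\mathcal{V}_1$ — here one uses that both $\mathcal{V}_0$ and $\mathcal{V}_1$ are nonprincipal so that one can always choose a next block of large enough $\min$ and prescribed approximate $\max$-behavior — and then take any ultrafilter on $\FIN$ extending the filter generated by the relevant $[X]$'s intersected with the pullbacks of $\mathcal{V}_0$ under $\min$ and $\mathcal{V}_1$ under $\max$; one must verify this filter has the finite intersection property. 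Once $T$ is in hand, verifying it is a closed sub-semigroup on which $+$ is well-defined and right-continuous is routine (it inherits these from $\beta\FIN$), and the Ellis--Namakura idempotent lemma finishes the argument.
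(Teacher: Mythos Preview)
Your approach is correct and is precisely the Glazer-style argument that Blass adapted; the survey does not give its own proof but merely cites Blass, so your outline matches the intended route. One small correction: your parenthetical claim that an idempotent in $(\beta\FIN,+)$ is ``exactly a block-generated (ordered-union) ultrafilter'' is false in general---idempotence yields via Hindman that every member of $\mathcal{U}$ \emph{contains} some $[X]$, but not that such $[X]$ can be taken in $\mathcal{U}$---however the theorem as stated asks only for an idempotent, so this does not affect your argument. Your worry about nonemptiness of $T$ is also overblown: simply observe that the family $\{\min^{-1}(A):A\in\mathcal{V}_0\}\cup\{\max^{-1}(B):B\in\mathcal{V}_1\}$ has the finite intersection property (pick $a\in A$, $b\in B$ with $a<b$ and use $\{a,b\}$) and extend to an ultrafilter; no diagonalization of block sequences is needed at this stage.
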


Taking any $\mathcal{V}_0=\mathcal{V}_1\equiv_T\mathcal{U}_{\mathrm{top}}$,
it follows that there exist idempotent ultrafilters on $\FIN$ realizing the maximal Tukey type.
On the other hand, there is an analogue of basic for this context, leading to ultrafilters on $\FIN$ which are not Tukey maximal.

The collection of block-sequences carry the following partial ordering $\le$.
For two infinite block-sequences $X=(x_n)_{n<\om}$ and $Y=(y_n)_{n<\om}$, define
$Y\le X$ if and only if each member of $Y$ is a finite union of elements of $X$; i.e.\ $y_n\in [X]$ for each $n$.
We write $Y\le^* X$ to mean that $Y/m\le X$ for some $m<\om$.
That is, $Y\le^* X$ if and only if there is some $k$ such that for all $n\ge k$, $y_n\in[X]$.

\begin{defn}[Definition 66 in \cite{Dobrinen/Todorcevic11}]\label{def.block-basic}
For infinite block sequences $X_n=(x^n_k)_{k<\om}$ and $X=(x_k)_{k<\om}$, the sequence
$(X_n)_{n<\om}$ {\em converges} to $X$ (written $X_n\ra X$ as $n\ra\infty$)
if for each $l<\om$ there is an $m<\om$ such that for all $n\ge m$ and all $k\le l$,
$x_k^n=x_k$.

A block-generated ultrafilter $\mathcal{U}$ is {\em block-basic} if
whenever we are given a sequence $(X_n)_{n<\om}$ of infinite block sequences of elements of $\FIN$ such that each $[X_n]\in\mathcal{U}$ 
and $(X_n)_{n<\om}$ converges to some infinite block sequence $X$ such that $[X]\in\mathcal{U}$,
then there is an infinite subsequence $(X_{n_k})_{k<\om}$ such that
$\bigcap_{k<\om}[X_{n_k}]\in\mathcal{U}$.
\end{defn}

The next fact follows immediately from the combinatorial characterization of Fact \ref{fact.Tukeytopchar}.

\begin{fact}\label{fact.bb}
If $\mathcal{U}$ is a block-basic ultrafilter on $\FIN$, then $(\mathcal{U},\contains)<_T([\mathfrak{c}]^{<\om},\sse)$.
\end{fact}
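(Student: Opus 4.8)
The plan is to verify that a block-basic ultrafilter $\mathcal{U}$ on $\FIN$ satisfies the combinatorial characterization of Fact~\ref{fact.Tukeytopchar}, applied to the directed partial ordering $(\mathcal{U},\contains)$. So fix an arbitrary family $\mathcal{X}\sse\mathcal{U}$ with $|\mathcal{X}|=\mathfrak{c}$; the goal is to produce an infinite subfamily $\{Y_n:n<\om\}\in[\mathcal{X}]^{\om}$ with $\bigcap_{n<\om}Y_n\in\mathcal{U}$. Since $\mathcal{U}$ is block-generated, each $A\in\mathcal{X}$ contains some $[Z_A]$ with $[Z_A]\in\mathcal{U}$, where $Z_A$ is an infinite block sequence; replacing $\mathcal{X}$ by $\{[Z_A]:A\in\mathcal{X}\}$ it suffices to find the desired subfamily among sets of the form $[Z]$, because $\bigcap_{n}[Z_{A_n}]\sse\bigcap_n A_n$, so membership of the former in $\mathcal{U}$ forces membership of the latter.

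First I would observe that the space of infinite block sequences, with the topology of convergence in Definition~\ref{def.block-basic}, is (naturally identified with) a closed subspace of a Polish space: an infinite block sequence is coded by an element of $\FIN^{\om}$ satisfying the closed condition $\max(x_n)<\min(x_{n+1})$, and the convergence notion in the definition is exactly convergence in the product topology on $\FIN^{\om}$ (which is Polish since $\FIN$ is countable discrete). Hence the $\mathfrak{c}$-sized set $\{Z_A:A\in\mathcal{X}\}$, lying inside this second-countable space, has a complete accumulation point; more simply, since a separable metrizable space has only $\mathfrak{c}$ open sets but we have $\mathfrak{c}$ points, some subfamily of size $\mathfrak{c}$ — in fact, it is enough to extract a countably infinite subsequence $(Z_{A_{n}})_{n<\om}$ that converges to some infinite block sequence $X$. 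Convergence here means: for each $l$, eventually all the sequences agree with $X$ on their first $l$ entries; standard diagonalization inside the countable space $\FIN$ delivers such a convergent subsequence from any infinite set of block sequences, so this step is routine.

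The one point that needs care — and this is the main obstacle — is ensuring that the limit $X$ satisfies $[X]\in\mathcal{U}$, since block-basicity only applies to convergent sequences whose limit also lies in the filter. To arrange this, I would note that $X$ is a ``limit'' in a strong sense: since each initial segment of $X$ is a common initial segment of cofinally many of the $Z_{A_n}$, and each $[Z_{A_n}]\in\mathcal{U}$, one shows $[X]\supseteq [Z_{A_n}]$ for large $n$ is false in general, so instead I would use that $\mathcal{U}$ is idempotent/block-generated to argue that $X$ is $\le^*$-above each $Z_{A_n}/m$ appropriately — actually the cleanest route is: pass to a further subsequence so fast that the first $n$ blocks of $Z_{A_n}$ already equal the first $n$ blocks of $X$, whence $X\le^* Z_{A_n}$ for every $n$ (the tail of $X$ beyond block $n$ must be checked to consist of unions of blocks of $Z_{A_n}$ — force this too by thinning). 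Then $[X]\subseteq^* [Z_{A_n}]\in\mathcal{U}$ is still not automatic, so the genuinely delicate step is choosing the thinning so that $X$ is a \emph{condition below} each $Z_{A_n}$ in the sense $X\le^* Z_{A_n}$, which gives $[X]\sse^*[Z_{A_n}]$ only in the wrong direction; hence the correct fix is to instead build $X$ as a block sequence with $[X]\in\mathcal{U}$ \emph{first}, using that $\mathcal{U}$ is block-generated to pick $[X]\in\mathcal{U}$ refining suitably, and then thin $\{Z_{A_n}\}$ to converge to that prescribed $X$.

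Once we have a subsequence $(X_n)_{n<\om}$ (renaming $Z_{A_n}$ as $X_n$) with each $[X_n]\in\mathcal{U}$ converging to an infinite block sequence $X$ with $[X]\in\mathcal{U}$, block-basicity of $\mathcal{U}$ yields an infinite $(X_{n_k})_{k<\om}$ with $\bigcap_{k<\om}[X_{n_k}]\in\mathcal{U}$. Setting $Y_k:=A_{n_k}\supseteq[X_{n_k}]$ gives $\bigcap_{k<\om}Y_k\supseteq\bigcap_{k<\om}[X_{n_k}]\in\mathcal{U}$, so $\bigcap_k Y_k\in\mathcal{U}$ since $\mathcal{U}$ is a filter. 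This is the infinite subfamily of $\mathcal{X}$ with intersection in $\mathcal{U}$ demanded by Fact~\ref{fact.Tukeytopchar}, so $(\mathcal{U},\contains)<_T([\mathfrak{c}]^{<\om},\sse)$. I expect the only real work to be the bookkeeping in the middle paragraph — arranging a convergent subsequence whose limit is forced to lie in $\mathcal{U}$ — and everything else (the reduction to sets of the form $[Z]$, the extraction of a convergent subsequence, the final filter argument) to be straightforward.
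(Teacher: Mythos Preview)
Your overall strategy is exactly the paper's: verify the combinatorial criterion of Fact~\ref{fact.Tukeytopchar} by reducing to a family of generators $[Z_A]$, extracting a convergent sequence of block sequences, and invoking block-basicity. The paper regards this as immediate, but your write-up has a genuine gap at the step you yourself flag as delicate.

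First, the sentence ``standard diagonalization \ldots\ delivers such a convergent subsequence from any infinite set of block sequences'' is false: the space of block sequences is not compact (e.g.\ the sequences $X_n=(\{n\},\{n+1\},\{n+2\},\dots)$ have no convergent subsequence, since the first block never stabilizes). You must use that the family has size $\mathfrak{c}$, not merely that it is infinite.

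Second, and more importantly, your proposed remedies for forcing $[X]\in\mathcal{U}$ do not work. Thinning so that $X\le^* Z_{A_n}$ gives the wrong containment, as you note; and ``pick $X$ with $[X]\in\mathcal{U}$ first, then thin the $Z_{A_n}$ to converge to that prescribed $X$'' is circular---nothing guarantees that any $Z_{A_n}$ shares a long initial segment with a preassigned $X$.

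Both issues dissolve with one standard observation. The space $\FIN^{\om}$ of block sequences is second countable, so in any subset of size $>\aleph_0$ all but countably many points are condensation points of that subset. (If $\{Z_A:A\in\mathcal{X}\}$ has size $\le\aleph_0$, pigeonhole gives uncountably many distinct $A$ sharing one $Z$, and $\bigcap A\supseteq[Z]\in\mathcal{U}$ finishes immediately.) Choose $X:=Z_{A_0}$ to be such a condensation point \emph{inside the set}; then $[X]=[Z_{A_0}]\in\mathcal{U}$ automatically, and you can select distinct $Z_{A_n}\to X$. Now block-basicity applies verbatim, and the rest of your argument goes through.
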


In the study of block-basic ultrafilters, the known  Ramsey theory  for $\FIN$ is useful.
The reader interested in further aspects of this is referred to \cite{Blass87}, \cite{Dobrinen/Todorcevic11}, and  \cite{TodorcevicBK10}.
Here, we simply state some equivalents of being block-basic.
Blass uses the terminology {\em stable ordered union ultrafilter} in \cite{Blass87} for what we call block-basic.
In the following theorem, the equivalence of (1) and (2) is shown in Theorem 68 in \cite{Dobrinen/Todorcevic11}, and the equivalence of (1) and (3) is shown in Theorem 4.2 of \cite{Blass87}.

\begin{thm}\label{thm.blockbasicequiv}
The following are equivalent for a block-generated ultrafilter $\mathcal{U}$ on $\FIN$.
\begin{enumerate}
\item
$\mathcal{U}$ is block-basic.
\item
For every sequence $(X_n)$ of infinite block sequences of $\FIN$ such that $[X_n]\in\mathcal{U}$ and $X_{n+1}\le^* X_n$ for each $n$,
there is an infinite block sequence $X$ such that $[X]\in\mathcal{U}$ and $X\le^* X_n$ for each $n$.
\item
$\mathcal{U}$ has the  Ramsey property.
\end{enumerate}
\end{thm}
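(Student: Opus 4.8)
\textbf{Proof proposal for Theorem \ref{thm.blockbasicequiv}.}

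The plan is to prove the equivalences by establishing $(1)\Rightarrow(2)$, $(2)\Rightarrow(1)$, and then connecting $(2)$ with the Ramsey property $(3)$, the latter being essentially Blass's argument from Theorem 4.2 of \cite{Blass87}. For $(1)\Rightarrow(2)$, suppose $\mathcal{U}$ is block-basic and let $(X_n)$ be a $\le^*$-decreasing sequence of block sequences with each $[X_n]\in\mathcal{U}$. The idea is to pass to a modified sequence $(X_n')$ that \emph{converges} to some block sequence $X$ while preserving membership in $\mathcal{U}$: namely, use a diagonalization to arrange that $X_n'$ agrees with a fixed limit block sequence $X$ on longer and longer initial segments. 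Since $X_{n+1}\le^* X_n$, for each $n$ we may choose $m_n$ so that $X_{n+1}/m_n \le X_n$; feeding this into a fusion-type construction produces $X$ with $[X]$ in the filter generated by the $[X_n]$, hence $[X]\in\mathcal{U}$ (using that $\mathcal{U}$ is block-generated, so its block-sequence generators are $\le^*$-directed downward). Then apply block-basicness to $(X_n')\to X$ to get an infinite subsequence with $\bigcap_k [X_{n_k}']\in\mathcal{U}$, and translate back to conclude there is $X'$ with $[X']\in\mathcal{U}$ and $X'\le^* X_n$ for all $n$.

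For $(2)\Rightarrow(1)$, suppose $(X_n)_{n<\om}\to X$ with all $[X_n],[X]\in\mathcal{U}$. Convergence in the sense of Definition \ref{def.block-basic} means the $X_n$ stabilize on initial segments, so in particular for each $n$ the tail $X_n/k_n$ (for suitable $k_n$) consists of blocks lying in $[X]$, i.e.\ $X_n/k_n \le X$. Using the fact that $\mathcal{U}$ is an ultrafilter and each $[X_n]\in\mathcal{U}$, one thins to a subsequence along which the block sequences are $\le^*$-decreasing (this uses a Ramsey/Galvin--Prikry type stepping-up inside $[X]$, replacing each $X_n$ by a $\le^*$-smaller block sequence still in $\mathcal{U}$ and below all the earlier ones); then $(2)$ supplies a common $\le^*$-lower bound $Y$ with $[Y]\in\mathcal{U}$, and $[Y]\sse^* \bigcap_k [X_{n_k}]$ up to finitely many coordinates gives $\bigcap_k [X_{n_k}]\in\mathcal{U}$ after a further shift. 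The equivalence $(1)\Llra(3)$ is then obtained by citing that $(2)$ is exactly the fusion/selectivity condition on block sequences which, via Hindman's theorem and the Galvin--Prikry analysis for $\FIN$ (see \cite{TodorcevicBK10}), is equivalent to $\mathcal{U}$ having the property that for every finite partition of $[X]$ (for $[X]\in\mathcal{U}$) into pieces that are, say, Souslin-measurable with respect to the appropriate topology, one piece contains some $[Y]$ with $[Y]\in\mathcal{U}$ — i.e.\ the Ramsey property.

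The main obstacle I expect is $(2)\Rightarrow(1)$: going from a $\le^*$-decreasing chain with a common lower bound to the convergent-sequence formulation requires massaging an \emph{arbitrary} convergent sequence into a decreasing one without leaving $\mathcal{U}$, and the natural way to do this — replacing $X_n$ by a block sequence that is simultaneously $\le^*$ below $X$, still generates a set in $\mathcal{U}$, and is $\le^*$ below $X_{n-1}'$ — is precisely where one needs the Ramsey theory for $\FIN$ (Hindman's theorem plus a Galvin--Prikry style argument) rather than a soft diagonalization. Once that thinning lemma is in hand, the rest is bookkeeping about finite shifts $X/m$ and the observation that modifying finitely many blocks of a block sequence changes $[X]$ only modulo a set not in any nonprincipal block-generated ultrafilter.
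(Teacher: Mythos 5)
In your $(1)\Rightarrow(2)$ the step that produces the limit of the modified sequence is circular. A fusion $X$ built diagonally from the $X_n$ is a $\le^*$-\emph{lower} bound, so tails of $[X]$ sit inside the $[X_n]$, not the other way around: $[X]$ need not belong to the filter generated by the $[X_n]$, and the assertion $[X]\in\mathcal{U}$ is precisely statement $(2)$, the thing being proved. Block-generatedness only gives \emph{finite} $\le$-directedness of the generators (via closure of $\mathcal{U}$ under finite intersections); countable $\le^*$-directedness is exactly the content of $(2)$ (Blass's stability) and is not part of being block-generated. The repair is cheap and keeps your strategy: take the limit to be $X_0$ itself, and let $X_n'$ be the first $n$ blocks of $X_0$ followed by a tail of $X_n$ starting beyond them. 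Then $[X_n']\contains[X_n/m]\in\mathcal{U}$ for suitable $m$ (no $[Z]$ can be contained in $\{x\in\FIN:\min(x)<r\}$, so tails of generators stay in $\mathcal{U}$), $(X_n')\to X_0$ in the sense of Definition \ref{def.block-basic}, block-basicness applies, and any block sequence generating a member of $\mathcal{U}$ inside $\bigcap_k[X_{n_k}']$ is $\le^*$ below every $X_n$.

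The direction $(2)\Rightarrow(1)$ has a genuine gap. First, you misread Definition \ref{def.block-basic}: convergence $(X_n)\to X$ means the \emph{initial segments} of the $X_n$ eventually coincide with those of $X$; it gives no control over tails, so your claim that $X_n/k_n\le X$ is unwarranted. Second, and fatally, the concluding inference is false: from $[Y]\in\mathcal{U}$ and $Y\le^* X_{n_k}$ for all $k$ you only get $[Y/m_k]\sse[X_{n_k}]$ with $k$-dependent, typically unbounded shifts $m_k$, and no ``further shift'' yields $\bigcap_k[X_{n_k}]\in\mathcal{U}$. Concretely, take a generator $X$ with $[X]\in\mathcal{U}$ and put $X_n:=X/n$: each $[X/n]\in\mathcal{U}$, the sequence is $\le$-decreasing, and $X$ itself is a common $\le^*$-lower bound in $\mathcal{U}$, yet $\bigcap_k[X_{n_k}]=\emptyset$ for \emph{every} subsequence. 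So a common $\le^*$-lower bound never suffices by itself; the convergence hypothesis, read correctly, is what must be exploited when choosing the subsequence, so that the finitely many blocks of the lower bound lying below the $k$-th shift are unions of blocks shared by $X$ and $X_{n_k}$, making one fixed member of $\mathcal{U}$ land inside every $[X_{n_k}]$ --- and your sketch never does this. (Note also that the survey itself does not prove the theorem but cites Theorem 68 of \cite{Dobrinen/Todorcevic11} for $(1)\Leftrightarrow(2)$ and Theorem 4.2 of \cite{Blass87} for the Ramsey-property equivalence; your handling of $(2)\Leftrightarrow(3)$ by citation is consistent with that, but the two reductions above need repair before your outline constitutes a proof.)
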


It follows from the previous theorem that 
if $\mathcal{U}$ is a block-basic ultrafilter on $\FIN$, 
then $\mathcal{U}_{\min}$ and $\mathcal{U}_{\max}$ are 
Rudin-Keisler incomparable selective ultrafilters on $\om$.
Then, from Corollary 12, \cite{Raghavan/Todorcevic12},
we have that for any block-basic ultrafilter $\mathcal{U}$ on $\FIN$,
$\mathcal{U}_{\min}$ and $\mathcal{U}_{\max}$ are Tukey-incomparable.
Applying Theorem 2.4, \cite{Blass87} of Blass,
we get a sort of converse:
Assuming CH,
for every pair $\mathcal{V}_0$ and $\mathcal{V}_1$ of non-isomorphic selective ultrafilters on $\om$,
there is a block-basic ultrafilter $\mathcal{U}$ on $\FIN$ such that $\mathcal{U}_{\min}=\mathcal{V}_0$ and $\mathcal{U}_{\max}=\mathcal{V}_1$.

\begin{rem}
The notion of block-basic ultrafilter in fact extends to any ultrafilter generated by a topological Ramsey space, producing ultrafilters with various partition properties.
All such ultrafilters are below the Tukey top.
See Section \ref{sec.Ramsey} for more on this.  
\end{rem}

We mention that there is another type of ultrafilter which is below the Tukey maximum.
The generic ultrafilter $\mathcal{G}_2$ forced by $\mathcal{P}(\om^2)/\Fin^{\otimes 2}$ is not a p-point.
It is shown in  \cite{Blass/Dobrinen/Raghavan13} that  
 $\mathcal{G}_2$ is in fact not basically generated, 
but is strictly below the Tukey maximum, and moreover
$(\mathcal{G}_2,\contains)\not\ge ([\aleph_1]^{<\om},\sse)$.

We conclude this section with some tools which may be useful for solving Isbell's Problem.
As shown above, p-points partially ordered by either of $\contains$ and $\contains^*$ are strictly below the Tukey top. 
Combining this with Proposition 3.12 of Milovich in \cite{Milovich08}, we have  that there is an ultrafilter $\mathcal{U}$ such that $(\mathcal{U},\contains)<_T([\mathfrak{c}]^{<\om},\sse)$ if and only if there is an ultrafilter $\mathcal{V}$ such that 
$(\mathcal{V},\contains^*)<_T([\mathfrak{c}]^{<\om},\sse)$.
Since 
CH implies the existence of p-points, we will now  give attention to what happens under the assumption $\neg$CH.
Assuming $\neg$CH, the following combinatorial principle holds.

\begin{defn}[\Todorcevic, Definition 79 in \cite{Dobrinen/Todorcevic11}]\label{defn.diamond}
$\lozenge_{[\mathfrak{c}]^{\om}}$ is the statement: There exist sets $S_A\sse A$, $A\in[\mathfrak{c}]^{\om}$, such that for each $X\sse\mathfrak{c}$, 
$\{A\in[\mathfrak{c}]^{\om}:X\cap A=S_A\}$ is stationary in $[\mathfrak{c}]^{\om}$.
\end{defn}

This implies the next combinatorial principle  in the same way that the standard $\lozenge$ implies $\lozenge^-$.

\begin{defn}[\Todorcevic,  Definition 80 in \cite{Dobrinen/Todorcevic11}]\label{defn.diamondminus}
$\lozenge^-_{[[\om]^{\om}]^{\om}}$ is the statement:
There exist ordered pairs $(\mathcal{U}_A,\mathcal{X}_A)$,
where $A\in[[\om]^{\om}]^{\om}$ and $\mathcal{X}_A\sse\mathcal{U}_A\sse A$,
such that for each pair $(\mathcal{U},\mathcal{X})$ with $\mathcal{X}\sse\mathcal{U}$ and $\mathcal{X},\mathcal{U}\in[[\om]^{\om}]^{\mathfrak{c}}$,
$\{A\in[[\om]^{\om}]^{\om}:\mathcal{U}_A=\mathcal{U}\cap A$, $\mathcal{X}_A=\mathcal{X}\cap A\}$ is stationary in $[[\om]^{\om}]^{\om}$.
\end{defn}

The principle $\lozenge^-_{[[\om]^{\om}]^{\om}}$  can be used to give sufficient conditions for being below the Tukey top as well as to characterize the property of being a p-point.
Fix a $\lozenge^-_{[[\om]^{\om}]^{\om}}$ sequence
$(\mathcal{U}_A,\mathcal{X}_A)$,
where $A\in[[\om]^{\om}]^{\om}$.

\begin{defn}[Definition 81 in \cite{Dobrinen/Todorcevic11}]\label{defn.DA}
Let $P_A=\{W\in[\om]^{\om}:\exists X\in\mathcal{U}_A(W\cap X=\emptyset)\}$,
$Q_A=\{W\in[\om]^{\om}: \forall X\in\mathcal{X}_A(W\sse^*X)\}$,
and
$Q'_A=\{W\in[\om]^{\om}:\exists(B_n)_{n<\om}\in[\mathcal{X}_A]^{\om}(\forall n<\om,\ W\sse^* B_n)\}$.
Note that $Q_A\sse Q'_A$.
Let $D_A=P_A\cup Q_A$ and  $D'_A=P_A\cup Q'_A$.
\end{defn}

The following are Facts 82, 83,  and 84 in \cite{Dobrinen/Todorcevic11}.

\begin{fact}\label{fact.DAdense}
\begin{enumerate}
\item
For each $A\in[[\om]^{\om}]^{\om}$,
$D_A$ and $D'_A$ are   dense open in the partial ordering $([\om]^{\om},\contains)$.
\item
For any nonprincipal ultrafilter $\mathcal{U}$, 
$\{A\in[[\om]^{\om}]^{\om}:\mathcal{U}\cap D'_A\ne\emptyset\}$ is stationary.
\item
If $\mathcal{U}$ is an ultrafilter and $\mathcal{U}\cap D'_A\ne\emptyset$ for club many $A\in[[\om]^{\om}]^{\om}$,
then $(\mathcal{U},\contains^*)<_T([\mathfrak{c}]^{<\om},\sse)$.
Thus, there is an ultrafilter $\mathcal{V}$ such that $(\mathcal{V},\contains)<_T([\mathfrak{c}]^{<\om},\sse)$.
\end{enumerate}
\end{fact}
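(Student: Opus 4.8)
\emph{Item (1).} I would keep the $\lozenge^-_{[[\om]^\om]^\om}$-sequence $(\mathcal{U}_A,\mathcal{X}_A)$ fixed throughout and treat the three clauses in turn. For openness, i.e.\ downward closure in $([\om]^\om,\contains)$, observe that if $W'\sse W\in[\om]^\om$ and $W\cap X=\emptyset$ for some $X\in\mathcal{U}_A$ then $W'\cap X=\emptyset$, and if $W\sse^* X$ (respectively $W\sse^* B_n$ for each $n$) then $W'\sse^* X$ (respectively $W'\sse^* B_n$); hence $P_A$, $Q_A$, $Q'_A$ are each downward closed, and so are $D_A$ and $D'_A$. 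Since $D_A\sse D'_A$, density follows once I place a member of $D_A$ below an arbitrary $W\in[\om]^\om$, and for that I use a dichotomy: if some $X\in\mathcal{U}_A$ has $W\setminus X$ infinite, then $W\setminus X\in P_A$; otherwise $W\sse^* X$ for every $X\in\mathcal{U}_A$, hence for every $X\in\mathcal{X}_A\sse\mathcal{U}_A$, so $W\in Q_A$ already (and when $\mathcal{U}_A=\emptyset$ this second alternative holds vacuously).

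\emph{Item (2).} Let $\mathcal{U}$ be a nonprincipal ultrafilter and let $C$ be an arbitrary club in $[[\om]^\om]^\om$; I must find $A\in C$ with $\mathcal{U}\cap D'_A\ne\emptyset$. Apply $\lozenge^-_{[[\om]^\om]^\om}$ to the pair $([\om]^\om,[\om]^\om)$ (legitimate, as $|[\om]^\om|=\mathfrak{c}$) to get that $S:=\{A:\mathcal{U}_A=A=\mathcal{X}_A\}$ is stationary. Since $\mathcal{U}$ is nonprincipal it contains every cofinite set, so any $Z\in[\om]^\om\setminus\mathcal{U}$ (such $Z$ exists, e.g.\ one of the set of even and the set of odd numbers) is coinfinite and $\om\setminus Z\in\mathcal{U}$ is infinite. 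The cone $C_Z:=\{A:Z\in A\}$ is a club, so $S\cap C\cap C_Z$ is stationary, in particular nonempty; for any $A$ in it we have $Z\in A=\mathcal{U}_A$ and $(\om\setminus Z)\cap Z=\emptyset$, so $\om\setminus Z\in P_A\sse D'_A$ while $\om\setminus Z\in\mathcal{U}$. As $C$ was arbitrary, $\{A:\mathcal{U}\cap D'_A\ne\emptyset\}$ is stationary.

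\emph{Item (3).} I would first record the $\contains^*$-analogue of Fact \ref{fact.Tukeytopchar} (which has the same proof): $(\mathcal{U},\contains^*)<_T([\mathfrak{c}]^{<\om},\sse)$ iff every $\mathcal{X}\sse\mathcal{U}$ with $|\mathcal{X}|=\mathfrak{c}$ has an infinite subfamily $\{B_n:n<\om\}$ with a $\contains^*$-upper bound in $\mathcal{U}$, i.e.\ some $W\in\mathcal{U}$ with $W\sse^* B_n$ for all $n$. So fix such an $\mathcal{X}$ and apply $\lozenge^-_{[[\om]^\om]^\om}$ to $(\mathcal{U},\mathcal{X})$; the stationary set $\{A:\mathcal{U}_A=\mathcal{U}\cap A,\ \mathcal{X}_A=\mathcal{X}\cap A\}$ meets the club witnessing the hypothesis of item (3), so pick $A$ in the intersection. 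Since $\mathcal{U}_A=\mathcal{U}\cap A\sse\mathcal{U}$ and $\mathcal{U}$ is a proper filter, no member of $\mathcal{U}$ is disjoint from a member of $\mathcal{U}_A$, so $\mathcal{U}\cap P_A=\emptyset$, whence $\emptyset\ne\mathcal{U}\cap D'_A=\mathcal{U}\cap Q'_A$. Any $W$ in this set carries a sequence $(B_n)_{n<\om}\in[\mathcal{X}_A]^\om\sse[\mathcal{X}]^\om$ with $W\sse^* B_n$ for all $n$, which is precisely the infinite bounded subfamily of $\mathcal{X}$ called for. Hence $(\mathcal{U},\contains^*)<_T([\mathfrak{c}]^{<\om},\sse)$.

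\emph{The final clause, and the main obstacle.} To produce an actual $\mathcal{V}$: under CH any p-point is $\contains^*$-below the top (Theorem \ref{thm.stevosummer}, Fact \ref{fact.basicnottop}), and under $\neg$CH the principle $\lozenge^-_{[[\om]^\om]^\om}$ holds, so a transfinite recursion of length $\mathfrak{c}$ — building $\mathcal{U}$ generator by generator, at step $\al$ deciding whether $Z_\al$ or its complement enters $\mathcal{U}$, and at the stages flagged by the $\lozenge^-$-sequence inserting a generator drawn from the dense set $D'_A$ (possible by item (1)) — yields an ultrafilter with $\mathcal{U}\cap D'_A\ne\emptyset$ for club-many $A$; item (3) together with Milovich's Proposition 3.12 in \cite{Milovich08} (quoted above) then supplies the required $\mathcal{V}$ with $(\mathcal{V},\contains)<_T([\mathfrak{c}]^{<\om},\sse)$. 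I expect this last construction to be the only genuinely delicate point: one must interleave the $\lozenge^-$-bookkeeping with the ordinary ultrafilter recursion so that \emph{club}-many $A$ are caught, not merely the stationarily many that item (2) provides automatically. Items (1)--(3) are, by contrast, essentially a matter of unwinding the definitions against the combinatorial description of the Tukey maximum.
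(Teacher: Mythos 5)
Your treatments of (1), (2), and the main implication of (3) are correct and are essentially the intended arguments (the survey itself gives no proof, deferring to Facts 82--84 of \cite{Dobrinen/Todorcevic11}): downward closure plus the dichotomy ``either $W\setminus X$ is infinite for some $X\in\mathcal{U}_A$, or $W\sse^* X$ for all $X\in\mathcal{U}_A$'' handles (1) (modulo the same convention the paper uses when it notes $Q_A\sse Q'_A$, namely that $\mathcal{X}_A$ is infinite); guessing the pair $([\om]^{\om},[\om]^{\om})$ and intersecting the resulting stationary set with the club $\{A:Z\in A\}$ handles (2); and guessing $(\mathcal{U},\mathcal{X})$, observing that $\mathcal{U}\cap P_A=\emptyset$ because $\mathcal{U}_A=\mathcal{U}\cap A$ is centered with $\mathcal{U}$, and reading off from $Q'_A$ an infinite subfamily of $\mathcal{X}$ with a $\contains^*$-bound in $\mathcal{U}$ handles (3), via the $\contains^*$ analogue of Fact \ref{fact.Tukeytopchar}, whose proof indeed transfers verbatim.

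The final clause is where you go wrong, in two ways. First, it is not an unconditional existence assertion: the sentence ``Thus, there is an ultrafilter $\mathcal{V}$ such that $(\mathcal{V},\contains)<_T([\mathfrak{c}]^{<\om},\sse)$'' is still under the hypothesis of (3) that some ultrafilter meets $D'_A$ for club many $A$, and it follows in one line from the first sentence together with Milovich's Proposition 3.12 (the equivalence, quoted earlier in this section, between the existence of a $\contains^*$-non-maximal and a $\contains$-non-maximal ultrafilter); no construction is called for. Second, the construction you sketch cannot be carried out by the means you indicate: if one could prove in ZFC$+\neg$CH that some ultrafilter meets club many of the sets $D'_A$, this (combined with the CH case via p-points) would answer Isbell's Problem \ref{prob.Isbellleft}, which the survey emphasizes is open --- indeed, immediately after Fact \ref{fact.ppointhitsall} it asks precisely whether these facts can be so applied. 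The concrete gap is the step ``inserting a generator drawn from the dense set $D'_A$ (possible by item (1))'': density of $D'_A$ in $([\om]^{\om},\contains)$ only allows you to shrink a single infinite set into $D'_A$; it does not provide a member of $D'_A$ compatible with the filter already built. For the $A$ that matter in the final verification one has $\mathcal{U}_A=\mathcal{U}\cap A\sse\mathcal{U}$, so $P_A$ is unavailable, and meeting $Q'_A$ demands a set having the finite intersection property with all previous generators and almost contained in infinitely many members of a prescribed countable subfamily of them --- a p-point-like requirement that a filter with uncountably many generators need not admit at a given stage. So delete the recursion and simply cite Milovich's transfer for the ``Thus'' clause.
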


Thus, we obtain a new characterization of p-points, under $\neg$CH.

\begin{fact}[Dobrinen/\Todorcevic, Fact 85 in \cite{Dobrinen/Todorcevic11}]\label{fact.ppointhitsall}
The following are equivalent:
\begin{enumerate}
\item
 $\mathcal{U}$ is a p-point;
\item
 $\mathcal{U}\cap D_A\ne\emptyset$ for all $A\in[[\om]^{\om}]^{\om}$;
\item
$\mathcal{U}\cap D_A\ne\emptyset$ for club many $A$.
\end{enumerate}
\end{fact}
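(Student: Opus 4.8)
The plan is to prove the cycle of implications $(1)\Rightarrow(2)\Rightarrow(3)\Rightarrow(1)$, using the definitions of $P_A$ and $Q_A$ from Definition \ref{defn.DA} together with the stationarity properties of the fixed $\lozenge^-_{[[\om]^{\om}]^{\om}}$ sequence $(\mathcal{U}_A,\mathcal{X}_A)$. The implication $(2)\Rightarrow(3)$ is immediate since ``for all $A$'' trivially implies ``for club many $A$'' (indeed for all $A$), so the real content lies in $(1)\Rightarrow(2)$ and $(3)\Rightarrow(1)$.

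For $(1)\Rightarrow(2)$: fix a p-point $\mathcal{U}$ and an arbitrary $A\in[[\om]^{\om}]^{\om}$. I would split into two cases according to whether $\mathcal{U}$ meets $P_A$. If $\mathcal{U}\cap P_A\ne\emptyset$ we are done since $P_A\sse D_A$. Otherwise, no member of $\mathcal{U}$ is almost-disjoint from, i.e.\ disjoint from, some element of $\mathcal{U}_A$; unpacking $P_A$, this says that for every $W\in\mathcal{U}$ and every $X\in\mathcal{U}_A$, $W\cap X\ne\emptyset$. Since $\mathcal{U}_A\sse A\sse[\om]^{\om}$ and $\mathcal{U}$ is an ultrafilter, for each $X\in\mathcal{U}_A$ this forces $X\in\mathcal{U}$ (if $X\notin\mathcal{U}$ then $\om\setminus X\in\mathcal{U}$ is disjoint from $X$, putting $\om\setminus X$ into $P_A\cap\mathcal{U}$). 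Hence $\mathcal{X}_A\sse\mathcal{U}_A\sse\mathcal{U}$, and $\mathcal{X}_A$ is a countable subfamily of the p-point $\mathcal{U}$. By the p-point property there is $W\in\mathcal{U}$ with $W\sse^* X$ for every $X\in\mathcal{X}_A$; such a $W$ witnesses $W\in Q_A$, so $\mathcal{U}\cap Q_A\ne\emptyset$ and again $\mathcal{U}\cap D_A\ne\emptyset$.

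For $(3)\Rightarrow(1)$: assume $\mathcal{U}$ is an ultrafilter with $\mathcal{U}\cap D_A\ne\emptyset$ for club many $A$, and suppose toward a contradiction that $\mathcal{U}$ is not a p-point. Then there is a countable family $\{X_n:n<\om\}\sse\mathcal{U}$ with no pseudo-intersection in $\mathcal{U}$; without loss of generality it is $\sse^*$-decreasing. Set $\mathcal{X}=\{X_n:n<\om\}$ — but this has only countable size, whereas the $\lozenge^-$ guessing is about pairs of size $\mathfrak{c}$, so instead I would take $\mathcal{U}$ itself (size $\mathfrak{c}$) as the guessed ultrafilter and a cleverly chosen $\mathcal{X}\sse\mathcal{U}$ of size $\mathfrak{c}$ recording a witness to failure of the p-point property — here I expect to lean on exactly the construction used for Fact \ref{fact.DAdense}(2)/(3), reversing it. Concretely, apply the $\lozenge^-$ property to $(\mathcal{U},\mathcal{X})$ to get stationarily many $A$ with $\mathcal{U}_A=\mathcal{U}\cap A$ and $\mathcal{X}_A=\mathcal{X}\cap A$; intersecting with the club from hypothesis (3) gives stationarily many — in particular at least one — such $A$ with $\mathcal{U}\cap D_A\ne\emptyset$. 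Pick $W\in\mathcal{U}\cap D_A$. Since $W\in\mathcal{U}$ and every $X\in\mathcal{U}_A=\mathcal{U}\cap A$ is in $\mathcal{U}$, $W$ cannot be disjoint from any such $X$, so $W\notin P_A$; hence $W\in Q_A$, i.e.\ $W\sse^* X$ for all $X\in\mathcal{X}_A=\mathcal{X}\cap A$. Choosing $\mathcal{X}$ so that $\mathcal{X}\cap A$ still codes (cofinally) the bad decreasing tower — which is the step requiring care — we conclude $W$ is a pseudo-intersection in $\mathcal{U}$ of the tower, contradicting the choice of the tower.

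The main obstacle is this last point: arranging $\mathcal{X}$ of size $\mathfrak{c}$ so that a stationary set of guesses $A$ recover enough of the witness to non-p-point-ness that a single $W\in Q_A$ yields a genuine pseudo-intersection in $\mathcal{U}$. This is precisely the book-keeping already carried out in the proof of Fact \ref{fact.DAdense} (Facts 82--84 of \cite{Dobrinen/Todorcevic11}), so I would set it up to mirror that argument — taking $\mathcal{X}$ to enumerate a $\contains^*$-decreasing sequence cofinal in some non-trivially-generated part of $\mathcal{U}$ — and the density of $D_A$ from Fact \ref{fact.DAdense}(1) guarantees the guessed $A$'s are compatible with $\mathcal{U}$ meeting $D_A$. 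Everything else is routine filter-combinatorics and the elementary observation that $P_A$ always contains complements of non-members of $\mathcal{U}_A$.
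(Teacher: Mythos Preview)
The survey itself does not include a proof of this fact (it merely cites Fact~85 of \cite{Dobrinen/Todorcevic11}), so I assess your proposal on its own.

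Your arguments for $(1)\Rightarrow(2)$ and $(2)\Rightarrow(3)$ are correct. The case split in $(1)\Rightarrow(2)$ is exactly right: either some $X\in\mathcal{U}_A$ lies outside $\mathcal{U}$, in which case $\om\setminus X\in P_A\cap\mathcal{U}$, or else $\mathcal{X}_A\sse\mathcal{U}_A\sse\mathcal{U}$ is a countable subfamily of the p-point and any pseudo-intersection lands in $Q_A\cap\mathcal{U}$.

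For $(3)\Rightarrow(1)$ your framework is correct but the ``step requiring care'' is mis-diagnosed, and your proposed resolution does not work. No choice of $\mathcal{X}$ alone can force $\{X_n:n<\om\}\sse\mathcal{X}\cap A$, because $A$ is handed to you by the $\lozenge^-$-guessing and might simply omit every $X_n$. Trying to encode the tower ``cofinally'' inside a size-$\mathfrak{c}$ set $\mathcal{X}$ does not help: $A$ is an arbitrary countable subset meeting $\mathcal{X}$ however it likes. The fix is not a clever $\mathcal{X}$ but an additional club. Take $\mathcal{X}=\mathcal{U}$ (this already has size $\mathfrak{c}$ and contains the $X_n$), apply $\lozenge^-$ to the pair $(\mathcal{U},\mathcal{U})$ to get a stationary set $S$ on which $\mathcal{U}_A=\mathcal{X}_A=\mathcal{U}\cap A$, and then intersect $S$ with the club from hypothesis~(3) \emph{and} with the club $C=\{A:\{X_n:n<\om\}\sse A\}$. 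For any $A$ in this triple intersection you now have $\{X_n:n<\om\}\sse\mathcal{U}\cap A=\mathcal{X}_A$, and your argument goes through verbatim: the chosen $W\in\mathcal{U}\cap D_A$ cannot lie in $P_A$ (since $\mathcal{U}_A\sse\mathcal{U}$), so $W\in Q_A$, hence $W\sse^* X_n$ for all $n$, contradicting the choice of the tower. Your closing remark about the density of $D_A$ is a red herring here; density plays no role in this direction.
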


\begin{question}
Can Facts \ref{fact.DAdense} and \ref{fact.ppointhitsall} be applied to solve Isbell's Problem?
\end{question}


\section{Canonical cofinal maps}\label{sec.canonical.maps}

For certain classes of ultrafilters, one only need consider cofinal maps which are {\em canonical} in some sense.
Types of canonical maps found so far consist of  continuous, basic (see Definitions \ref{defn.basicom} and ), and finitely generated (see Definition \ref{defn.finitelygen}), all of which are definable.
As there are only continuum many such maps on any ultrafilter, the existence of canonical maps 
reduces the number of cofinal maps one need consider from $2^{\mathfrak{c}}$ to $\mathfrak{c}$.
This  implies that the Tukey type of any ultrafilter with canonical cofinal maps, and any ultrafilter Tukey reducible to it, has cardinality continuum,
thus marking  a strong dividing line between ultrafilters with canonical maps and those which are Tukey maximal.

The following gives an overview of the types of canonical maps which have been found so far. 
More precise statements follow later in this section.

\begin{enumerate}
\item
All ultrafilters Tukey reducible to any p-point have continuous cofinal maps (Corollary \ref{cor.DT}).
\item
All iterated Fubini products of p-points have basic,  hence finitely generated, maps (see Definition \ref{defn.basiconfront} and Theorem \ref{thm.allFubProd_p-point_cts}).
\item
All basically generated ultrafilters have finitary Tukey reductions (see Definition \ref{defn.finitelygen} and Theorem \ref{lem.16R}).
\item
All stable ordered union ultrafilters and their Rudin-Keisler $\min-\max$ projections have continuous Tukey reductions (Theorem \ref{thm.FINcanonical}).
\end{enumerate}

Up to the writing of this article,
every type of ultrafilter known to be not Tukey maximal has been shown to have canonical maps of some sort,
raising the following open problem.

\begin{question}\label{q.nottop_canon?}
Suppose  $(\mathcal{U},\contains)<_T([\mathfrak{c}]^{<\om},\sse)$.
Does it follow that  $\mathcal{U}$ has definable Tukey reductions?
Does it follow that $\mathcal{U}$ has finitely generated Tukey reductions?
\end{question}

We begin by discussing continuity of maps on ultrafilters.
By identifying subsets of $\om$ with their characteristic functions on domain $\om$,
we can consider $\mathcal{P}(\om)$ as a topological space, identifying it with the Cantor space $2^{\om}$.
Any subset of $\mathcal{P}(\om)$ can be considered as a   topological space, with the  topology inherited as a subspace of  the Cantor space.
Given  subsets $\mathcal{X},\mathcal{Y}\sse\mathcal{P}(\om)$,
a function $f:\mathcal{X}\ra\mathcal{P}(\om)$ is {\em continuous} if it is continuous with respect to the 
 subspace topologies on $\mathcal{X}$ and $\mathcal{Y}$.
A sequence $(X_n)_{n<\om}$ of members of $\mathcal{X}$ is said to converge to $X\in\mathcal{X}$ if 
there is some sequence $k_n$ such that
for all $m\ge k_n$, $X_m\cap n=X_{k_n}\cap n$.
Thus,
 a function $f:\mathcal{X}\ra\mathcal{Y}$ is continuous if for each sequence $(X_n)_{n<\om}\sse\mathcal{X}$ which converges to some $X\in\mathcal{X}$,
the sequence $(f(X_n))_{n<\om}$ converges to $f(X)$.

Continuous cofinal maps are crucial to   the analysis of 
the structure of the Tukey types of p-points, especially which structures embed into the Tukey types of p-points (e.g.\ in \cite{Dobrinen/Todorcevic11} and
\cite{Raghavan/Todorcevic12}), which we shall survey in Section \ref{sec.embeddings}.
In \cite{Raghavan/Todorcevic12}, Raghavan showed that continuous cofinal maps suffice to prove that the Tukey order is the same as the Rudin-Blass order when the reduced ultrafilter is a q-point.
Continuous cofinal maps have been emlpoyed to completely classify Tukey types and  the Rudin-Keisler structures inside them
for Ramsey ultrafilters  (in work of \Todorcevic\ in
\cite{Raghavan/Todorcevic12}), and for large classes of  rapid p-points (in   work in \cite{Dobrinen/Todorcevic12} and \cite{Dobrinen/Todorcevic13}), as will be surveyed in Section \ref{sec.Ramsey}.

It becomes useful now to fix the following notation.
For  $X$ in an ultrafilter $\mathcal{U}$,  we let $\mathcal{U}\re X$  denote $\{Y\in\mathcal{U}:Y\sse X\}$.
Note that $\mathcal{U}\re X$ is a filter base for $\mathcal{U}$, so $(\mathcal{U},\contains)\equiv_T(\mathcal{U}\re X,\contains)$.
The following theorem was the first canonization theorem for cofinal maps on ultrafilters.
It is the key for all subsequent study of the structure of Tukey types of p-points.

\begin{thm}[Dobrinen/\Todorcevic, Theorem 20 in \cite{Dobrinen/Todorcevic11}]\label{thm.5}
Suppose $\mathcal{U}$ is a p-point on $\om$ and $\mathcal{V}$ is an arbitrary ultrafilter on $\om$ such that $\mathcal{U}\ge_T \mathcal{V}$.
For each monotone cofinal map  $f:\mathcal{U}\ra\mathcal{V}$,
 there is an $X\in\mathcal{U}$
such that $f\re (\mathcal{U}\re X)$ is continuous.
Moreover, there is 
  a continuous monotone map
$f^*:\mathcal{P}(\om)\ra\mathcal{P}(\om)$ 
such that $f^*\re(\mathcal{U}\re X)=f\re(\mathcal{U}\re X)$.
Hence, there is a continuous monotone cofinal map $f^*\re\mathcal{U}$  from $\mathcal{U}$ into $\mathcal{V}$ which extends $f\re(\mathcal{U}\re X)$.
\end{thm}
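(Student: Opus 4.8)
The plan is to exploit the $p$-point property to tame the cofinal map $f$ on a single large set, and then to extend it continuously by a standard ``cutting off at the first decided coordinate'' trick. Throughout I will use that, by Fact \ref{fact.monotone}, we may assume $f$ is monotone, and that $(\mathcal{U},\contains)\equiv_T(\mathcal{U}\re X,\contains)$ for every $X\in\mathcal{U}$, so reducing to $\mathcal{U}\re X$ loses nothing.

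\textbf{Step 1: Localizing with the $p$-point property.} For each $n<\om$ I would examine how $f$ decides membership of $n$ in $f(A)$ as $A$ ranges over $\mathcal{U}$. Fix a countable decreasing base is not available in general, but one can still do the following: enumerate $\om$ and, for each $n$, use monotonicity of $f$ to see that the family $\{A\in\mathcal{U}: n\in f(A)\}$ is upward closed; the obstruction to continuity is that whether $n\in f(A)$ may depend on arbitrarily late bits of $A$. The key maneuver is to build, recursively in $n$, sets $U_n\in\mathcal{U}$ together with finite $k_n$ so that for every $A\in\mathcal{U}$ with $A\sse U_n$, membership ``$n\in f(A)$'' is already determined by $A\cap k_n$. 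Concretely, for fixed $n$ one splits $\mathcal{U}$ according to the value of $f(A)\cap(n+1)$; there are finitely many possible values, so one of the corresponding pieces lies in $\mathcal{U}$, and a further diagonalization over finite initial segments together with monotonicity pins down the dependence on an initial segment $k_n$. Then $X$ is obtained by taking a pseudointersection: since $\mathcal{U}$ is a $p$-point, there is $X\in\mathcal{U}$ with $X\sse^* U_n$ for all $n$; after shrinking finitely one arranges $X\sse U_n$ for each $n$ (or absorbs the finite error into $k_n$). This is the step I expect to be the main obstacle: making the recursion actually produce a coherent dependence on initial segments that survives the pseudointersection, i.e.\ organizing the bookkeeping so that on $\mathcal{U}\re X$ the value $f(A)\cap(n+1)$ depends only on $A\cap k_n$. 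One typically handles this by the Galvin--Glazer / p-point style argument of repeatedly refining and diagonalizing, exactly as in Section 2 of \cite{Dobrinen/Todorcevic11}.

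\textbf{Step 2: Continuity of $f\re(\mathcal{U}\re X)$.} Once membership ``$n\in f(A)$'' depends only on $A\cap k_n$ for all $A\in\mathcal{U}\re X$, continuity is immediate: if $(A_i)_{i<\om}\sse\mathcal{U}\re X$ converges to $A\in\mathcal{U}\re X$, then for each $n$ we have $A_i\cap k_n=A\cap k_n$ for all large $i$, hence $n\in f(A_i)\Llra n\in f(A)$ for all large $i$, so $f(A_i)\ra f(A)$.

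\textbf{Step 3: Extending to a continuous monotone $f^*$ on all of $\mathcal{P}(\om)$.} Define $f^*(B)$ for arbitrary $B\sse\om$ by the same rule read off from the $k_n$'s: for each $n$, look at whether the ``accepting condition'' determined in Step 1 on the finite set $B\cap k_n$ holds, and put $n\in f^*(B)$ accordingly; to guarantee monotonicity globally one takes $n\in f^*(B)$ iff there exists $A\in\mathcal{U}\re X$ with $A\cap k_n\sse B\cap k_n$ (equivalently $A\cap k_n = B\cap k_n$ for some $A$) such that $n\in f(A)$, i.e.\ one unions over all ``witnesses'' inside the relevant finite trace. This map is monotone in $B$ because enlarging $B$ can only enlarge the set of witnessing traces, and it is continuous because again $n\in f^*(B)$ depends only on $B\cap k_n$. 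By construction $f^*\re(\mathcal{U}\re X)=f\re(\mathcal{U}\re X)$. Finally, $f^*\re\mathcal{U}$ is monotone (restriction of a monotone map) and cofinal: its image contains $f''(\mathcal{U}\re X)$, which is cofinal in $\mathcal{V}$ since $\mathcal{U}\re X$ is cofinal in $(\mathcal{U},\contains)$ and $f$ is cofinal; hence by Fact \ref{fact.monotonemap} the monotone map $f^*\re\mathcal{U}$ is a cofinal map from $\mathcal{U}$ into $\mathcal{V}$ extending $f\re(\mathcal{U}\re X)$. This completes the argument.
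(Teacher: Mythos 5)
Your Steps 2 and 3 are essentially fine (modulo the parenthetical ``equivalently $A\cap k_n=B\cap k_n$ for some $A$'', which fails for arbitrary $B$ since $B\cap k_n$ need not sit inside $X$), and your witness-based definition of $f^*$ is a workable version of the extension, with cofinality of $f^*\re\mathcal{U}$ argued correctly. The genuine gap is Step 1, which is the entire content of the theorem and which you do not actually carry out. Two concrete problems: first, ``splitting $\mathcal{U}$ according to the value of $f(A)\cap(n+1)$; there are finitely many possible values, so one of the corresponding pieces lies in $\mathcal{U}$'' is a category error --- the pieces are subfamilies of $\mathcal{U}$, i.e.\ subsets of $\mathcal{P}(\om)$, and $\mathcal{U}$ is an ultrafilter on $\om$, so it cannot select one of them. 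What actually does the work is monotonicity plus directedness: for a fixed finite $s$, the values $f(s\cup Y)\cap(n+1)$, $Y\in\mathcal{U}$, form a downward directed family of subsets of a finite set, hence stabilize at a minimum below some $Y_{s,n}\in\mathcal{U}$; this must be done for \emph{all} finite $s$, not only $s\sse k_n$ for a pre-chosen $k_n$.

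Second, and more seriously, the passage from the per-$n$ sets $U_n$ to the pseudointersection does not go through as you describe. You cannot arrange $X\sse U_n$ for all $n$, and ``absorbing the finite error into $k_n$'' fails: the determination property you built into $U_n$ only applies to sets lying entirely inside $U_n$, whereas a set $A\in\mathcal{U}\re X$ has a finite part outside $U_n$; enlarging $k_n$ to swallow that error enlarges the family of finite traces $s$ that must have been stabilized when $U_n$ was chosen, which in turn changes $U_n$ and hence the error --- a circularity. Breaking it is exactly the point of the Dobrinen--\Todorcevic\ argument: one first fixes stabilizing sets $Y_{s,n}$ for all $(s,n)$, takes a pseudointersection of the countably many intersections $Z_m=\bigcap\{Y_{s,n}:\max(s)<m,\ n\le m\}$, and then must make a further, non-obvious choice of $X$ and of the levels $k_n$ (e.g.\ by thinning $X$ against the interval partition generated by the ``catch-up'' function $m\mapsto\min\{l:X\setminus l\sse Z_m\}$, so that $X\setminus k_n\sse Z_{k_n}$ for infinitely many $k_n$) before the level-determination, and hence continuity and the basic form of the map, follows. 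Deferring this step to ``exactly as in Section 2 of \cite{Dobrinen/Todorcevic11}'' is deferring to the very proof you are asked to give, so as written the proposal has a real hole at its center, even though its overall strategy (stabilize decisions level by level using the p-point, then extend level by level) is the same as the original.
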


In fact, the theorem gives a strong form of continuity, which we call basic.
The following definitions from \cite{Dobrinen10} fine-tune the notions of continuity and the notion of being generated by a finitary map.
To simplify notation, we identify sets with their characteristic functions.
For a set $X$, $\chi_X$ denotes its characteristic function.
For sets or characteristic functions $s,t$, we let $s\sqsubset t$ denote that $s$ is an initial segment of $t$.
(For ultra-precise statements, see \cite{Dobrinen10}.)

\begin{defn}[Dobrinen, Definitions 6 and 7 in \cite{Dobrinen10}]\label{defn.basicom}
Given  a subset $D$ of $2^{<\om}$,
we shall call a map $\hat{f}:D\ra 2^{<\om}$ \em level preserving \rm if 
there is a strictly increasing sequence  $(k_m)_{m<\om}$ such that 
for each  $s\in D\cap 2^{k_m}$, we have that $\hat{f}(s)\in 2^m$.
A level preserving map $\hat{f}$ is \em initial segment preserving \rm if whenever $m<m'$,  $s\in D\cap 2^{k_m}$, and  $s'\in D\cap 2^{k_{m'}}$, then  $s\sqsubseteq s'$ implies $\hat{f}(s)\sqsubseteq\hat{f}(s')$.
$\hat{f}$ is 
{\em monotone} if for each $s,t\in D$, $s\sse t$ implies $\hat{f}(s)\sse \hat{f}(t)$.

A monotone map $f$ on a subset $\mathcal{D}\sse \mathcal{P}(\om)$ is said to be {\em basic} if  $f$ is 
generated  by a monotone, level and initial segment preserving map in the following manner:
There is a strictly
increasing sequence $(k_m)_{m<\om}$ such that,
letting 
$D=\{\chi_X\re k_m: X\in\mathcal{D},\ m<\om\}$,
there is a 
 level and initial segment preserving map
$\hat{f}:D\ra 2^{<\om}$ 
such that 
for each $X\in\mathcal{D}$,
\begin{equation} 
f(X)=\bigcup_{m<\om} \hat{f}(\chi_X\re k_m).
\end{equation}
In this case, we say that $\hat{f}$ \em generates \rm $f$.

We say that $\mathcal{U}$ {\em has basic Tukey reductions} if for every monotone cofinal map $f:\mathcal{U}\ra\mathcal{V}$, $f$ is basic when restricted to some  cofinal subset $\mathcal{D}\sse\mathcal{U}$.
We say that
$\mathcal{U}$ \em has continuous Tukey reductions \rm if whenever  $f:\mathcal{U}\ra\mathcal{V}$ is a monotone cofinal map, there is a cofinal subset $\mathcal{D}\sse\mathcal{U}$ such that $f\re\mathcal{D}$ is continuous. 
\end{defn}

\begin{rem}
It follows from the definition that every  basic map $f$ on a subset $\mathcal{D}$ of $\mathcal{P}(\om)$ is  continuous on $\mathcal{D}$. 
Thus, if $\mathcal{U}$ has basic Tukey reductions, the $\mathcal{U}$ has continuous Tukey reductions.
\end{rem}

Using this terminology, Theorem \ref{thm.5} can be restated as follows.

\begin{thm}[Dobrinen/\Todorcevic, \cite{Dobrinen/Todorcevic11}]\label{thm.ppt}
If $\mathcal{U}$ is a p-point on $\om$, then $\mathcal{U}$ has  basic Tukey reductions.
\end{thm}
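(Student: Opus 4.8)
The statement is essentially Theorem \ref{thm.5} repackaged in the vocabulary of Definition \ref{defn.basicom}, so the plan is to reduce it to Theorem \ref{thm.5} and then upgrade the continuous extension produced there to a \emph{basic} one by extracting the combinatorial skeleton of the construction. Fix a p-point $\mathcal{U}$, an ultrafilter $\mathcal{V}$ with $\mathcal{U}\ge_T\mathcal{V}$, and a monotone cofinal map $f:\mathcal{U}\ra\mathcal{V}$ (by Fact \ref{fact.monotone} it suffices to treat monotone cofinal maps). By Theorem \ref{thm.5} there is an $X\in\mathcal{U}$ and a continuous monotone $f^*:\mathcal{P}(\om)\ra\mathcal{P}(\om)$ agreeing with $f$ on $\mathcal{U}\re X$; since $\mathcal{U}\re X$ is cofinal in $\mathcal{U}$ (as $X\in\mathcal{U}$), it is enough to show that $f^*$, restricted to some further cofinal $\mathcal{D}\sse\mathcal{U}\re X$, is generated by a level and initial-segment preserving map in the sense of Definition \ref{defn.basicom}.

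\textbf{Key steps.} First I would revisit the proof of Theorem \ref{thm.5}: the continuity there is obtained by a fusion/diagonalization argument producing, for each $m$, a finite initial segment of some $Y\in\mathcal{U}$ that already decides $f(Y)\cap m$; the p-point property is used to pass to a single $X$ along which these decisions stabilize. The output of that argument is exactly a strictly increasing sequence $(k_m)_{m<\om}$ (the lengths of the deciding initial segments) together with the assignment $s\mapsto f^*(X')\cap m$ for any $X'\sse X$ with $\chi_{X'}\re k_m=s$. Second, I would check that this assignment is well defined on $D=\{\chi_{X'}\re k_m: X'\in\mathcal{D},\ m<\om\}$ — i.e., that $\chi_{X'}\re k_m$ really determines $f^*(X')\cap m$ — which is the content of continuity localized at level $k_m$; if necessary one shrinks $X$ once more (again using that $\mathcal{U}$ is a p-point, to diagonalize over countably many requirements) so that this holds for all $X'\sse X$ in $\mathcal{U}$, and sets $\mathcal{D}=\mathcal{U}\re X$. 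Third, define $\hat f(s)=f^*(X')\cap m$ for $s\in D\cap 2^{k_m}$. Then: $\hat f$ is level preserving by construction ($\hat f(s)\in 2^m$ for $s\in 2^{k_m}$); it is initial-segment preserving because if $s\sqsubseteq s'$ with $s\in 2^{k_m}$, $s'\in 2^{k_{m'}}$, $m<m'$, one may pick a single $X'\sse X$ in $\mathcal{U}$ with $\chi_{X'}\re k_{m'}=s'$, whence $\hat f(s)=f^*(X')\cap m\sqsubseteq f^*(X')\cap m'=\hat f(s')$; and it is monotone because $s\sse t$ with $s,t\in D\cap 2^{k_m}$ gives $X'\sse X''$ in $\mathcal{U}\re X$ and monotonicity of $f^*$ yields $f^*(X')\cap m\sse f^*(X'')\cap m$. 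Finally, continuity of $f^*$ gives $f^*(X')=\bigcup_{m}(f^*(X')\cap m)=\bigcup_m\hat f(\chi_{X'}\re k_m)$ for every $X'\in\mathcal{D}$, which is precisely the defining equation (1) of a basic map. Hence $f\re\mathcal{D}=f^*\re\mathcal{D}$ is basic, and since $\mathcal{D}$ is cofinal in $\mathcal{U}$, $\mathcal{U}$ has basic Tukey reductions.

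\textbf{Main obstacle.} The delicate point is the well-definedness in the second step: a priori $f^*(X')\cap m$ could depend on more of $\chi_{X'}$ than just its restriction to $k_m$, even though $f^*$ is continuous (continuity only says the dependence is \emph{eventually} local, with the cut-off possibly varying with $X'$). Making the cut-offs $(k_m)$ uniform over all $X'\in\mathcal{U}\re X$ simultaneously is exactly where the p-point hypothesis does its work — one lists the countably many ``level-$m$ locality'' demands and diagonalizes, using almost-inclusion and then the p-point property to collapse an almost-decreasing sequence to a single $X$. I expect this diagonalization, and the bookkeeping showing it can be folded into the shrinking already performed in Theorem \ref{thm.5} rather than done separately, to be the part that requires genuine care; the verification that $\hat f$ is level-, initial-segment-, and monotonicity-preserving is then routine.
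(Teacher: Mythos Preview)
Your proposal is correct and matches the paper's treatment exactly: the paper presents Theorem~\ref{thm.ppt} as nothing more than a restatement of Theorem~\ref{thm.5} in the language of Definition~\ref{defn.basicom}, and you have correctly identified this and spelled out the translation.

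One remark: the ``main obstacle'' you flag is lighter than you suggest. Theorem~\ref{thm.5} hands you a continuous monotone $f^*$ defined on \emph{all} of $\mathcal{P}(\om)$, and $\mathcal{P}(\om)=2^{\om}$ is compact; hence $f^*$ is uniformly continuous, which immediately produces a uniform sequence $(k_m)_{m<\om}$ such that $\chi_{X'}\re k_m$ determines $f^*(X')\cap m$ for every $X'\in\mathcal{P}(\om)$, not just those in $\mathcal{U}\re X$. So no second diagonalization or further appeal to the p-point property is needed once Theorem~\ref{thm.5} is in hand; the level- and initial-segment-preserving $\hat f$ you describe is well defined on all of $2^{<\om}$, and your verifications of monotonicity and the generating equation go through verbatim. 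The genuine use of the p-point hypothesis is entirely inside the proof of Theorem~\ref{thm.5}, as the paper's phrasing (``can be restated as follows'') indicates.
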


It was shown in \cite{Dobrinen10} that the property of having basic Tukey reductions is inherited under Tukey reducibility.
We point out that this is the only property known so far to be inherited under Tukey reducibility, 
whereas many standard properties, such as being a  p-point or selective, are inherited under Rudin-Keisler reducibility but not under Tukey reducibility.

\begin{thm}[Dobrinen, Theorem 9 in \cite{Dobrinen10}]\label{thm.D9}
Suppose that $\mathcal{U}$ has basic Tukey reductions.
If  $\mathcal{W}$ is Tukey reducible to $\mathcal{U}$, then $\mathcal{W}$ also has basic Tukey reductions.
\end{thm}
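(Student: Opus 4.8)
The plan is to chase the composition of cofinal maps through the basic-map machinery. Suppose $\mathcal{U}$ has basic Tukey reductions and $\mathcal{W}\le_T\mathcal{U}$. By Fact \ref{fact.monotone} we may fix a monotone cofinal map $g:\mathcal{U}\ra\mathcal{W}$ witnessing $\mathcal{W}\le_T\mathcal{U}$. Now let $h:\mathcal{W}\ra\mathcal{V}$ be an arbitrary monotone cofinal map; we must produce a cofinal $\mathcal{D}'\sse\mathcal{W}$ on which $h$ is basic. The idea is to pull $h$ back along $g$: the composition $h\circ g:\mathcal{U}\ra\mathcal{V}$ is monotone and cofinal (composition of cofinal maps is cofinal, and composition of monotone maps is monotone), so by hypothesis there is a cofinal $\mathcal{D}\sse\mathcal{U}$ such that $(h\circ g)\re\mathcal{D}$ is basic, generated by some level and initial-segment preserving $\widehat{h\circ g}$ along a sequence $(k_m)_{m<\om}$. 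Simultaneously, since $g$ itself is monotone cofinal from a space with basic Tukey reductions, we may — after shrinking $\mathcal{D}$ — assume $g\re\mathcal{D}$ is also basic, generated by some $\hat{g}$ along a (common, after interleaving) sequence of levels.

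The next step is to read off a generating finitary map for $h$ on the image side. Set $\mathcal{D}'=g''\mathcal{D}\sse\mathcal{W}$; this is cofinal in $\mathcal{W}$ because $g$ is a cofinal map and $\mathcal{D}$ is cofinal in $\mathcal{U}$. For $Y\in\mathcal{D}'$, pick $X\in\mathcal{D}$ with $g(X)=Y$; then $h(Y)=h(g(X))=\bigcup_m\widehat{h\circ g}(\chi_X\re k_m)$, while $Y=g(X)=\bigcup_m\hat{g}(\chi_X\re k_m)$. The point is that $\hat{g}$ is level and initial-segment preserving, so $\chi_Y\re\ell_m$ is determined by $\chi_X\re k_m$ for an appropriate subsequence $\ell_m$; one then wants to define $\hat{h}(\hat{g}(\chi_X\re k_m)):=\widehat{h\circ g}(\chi_X\re k_m)$ and check this is well-defined, monotone, level preserving, and initial-segment preserving, so that $\hat{h}$ generates $h\re\mathcal{D}'$.

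The main obstacle — and the step requiring genuine care — is the well-definedness of $\hat{h}$: two distinct stems $\chi_X\re k_m$ and $\chi_{X'}\re k_m$ might satisfy $\hat{g}(\chi_X\re k_m)=\hat{g}(\chi_{X'}\re k_m)$ yet have $\widehat{h\circ g}(\chi_X\re k_m)\ne\widehat{h\circ g}(\chi_{X'}\re k_m)$, in which case no finitary $\hat{h}$ on the image side exists. The remedy is to pass to a still-smaller cofinal $\mathcal{D}\sse\mathcal{U}$ on which $g$ is \emph{injective on stems} at each level — i.e. $\chi_X\re k_m\mapsto\hat{g}(\chi_X\re k_m)$ is injective — or, failing that, to thin $\mathcal{D}$ so that $\widehat{h\circ g}$ factors through $\hat{g}$. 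Achieving such a thinning is where the p-point-style / basically-generated combinatorics of $\mathcal{U}$ (encoded in its having basic Tukey reductions) must be invoked once more, possibly by re-running the basic-reduction hypothesis on a modified map that records both $g$ and $h\circ g$ as a single $\mathcal{P}(\om)\times\mathcal{P}(\om)$-valued map. Once $\hat{h}$ is seen to be well-defined, the remaining verifications (monotonicity, level preservation, initial-segment preservation) are routine diagram chases inherited from the corresponding properties of $\hat{g}$ and $\widehat{h\circ g}$, and we conclude that $h\re\mathcal{D}'$ is basic, so $\mathcal{W}$ has basic Tukey reductions.
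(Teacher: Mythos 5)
Your strategy---compose the arbitrary monotone cofinal $h:\mathcal{W}\ra\mathcal{V}$ with a monotone cofinal $g:\mathcal{U}\ra\mathcal{W}$, apply the hypothesis to $h\circ g$, and try to transfer the resulting finitary generator to the image side $\mathcal{D}'=g''\mathcal{D}$---is the natural first attempt, and you correctly isolate where it breaks: for $Y\in\mathcal{D}'$ the value $h(Y)$ must be determined level-by-level by initial segments of $Y$ alone, and nothing rules out $X,X'\in\mathcal{D}$ with $\hat g(\chi_X\re k_m)=\hat g(\chi_{X'}\re k_m)$ for which $g(X)$ and $g(X')$ agree far out while $h(g(X))$ and $h(g(X'))$ already differ on small coordinates. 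But your proposal stops exactly there: neither suggested remedy is carried out, and neither works as stated. Passing to a smaller cofinal subset of $\mathcal{U}$ cannot in general make the stem map $s\mapsto\hat g(s)$ injective on the stems that occur, and nothing in the definition of basic Tukey reductions supplies such a thinning; likewise, the hypothesis applies only to monotone \emph{cofinal} maps into an ultrafilter on a countable base, so it cannot simply be ``re-run'' on a $\mathcal{P}(\om)\times\mathcal{P}(\om)$-valued map recording $(g,h\circ g)$---and even if you arrange $g$ and $h\circ g$ to be simultaneously basic on one cofinal set with common levels, the coherence problem is untouched, since distinct $X,X'$ with equal $\hat g$-stems may still have unequal $\widehat{h\circ g}$-stems. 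So the crux of the theorem---producing a cofinal subset of $\mathcal{W}$ on which $h$ is generated by a level and initial-segment preserving finitary map---is asserted rather than proved; this is a genuine gap, not a routine verification.

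Two further points. First, your preliminary step ``after shrinking $\mathcal{D}$, assume $g\re\mathcal{D}$ is also basic'' is not automatic: two cofinal subsets of $(\mathcal{U},\contains)$ can have empty intersection (for instance, the members of $\mathcal{U}$ with even minimum versus those with odd minimum), so you cannot simply intersect the cofinal set witnessing basicness of $g$ with the one for $h\circ g$. This is exactly the kind of difficulty the Extension Theorem (Theorem \ref{thm.PropACtsMaps}) is designed to remove, by replacing a map that is basic only on a cofinal subset with a basic monotone map defined on all of $\mathcal{P}(\om)$ whose restriction to $\mathcal{U}$ is still cofinal. Second, the paper's proof of Theorem \ref{thm.D9} runs through that Extension Theorem, which your argument never invokes; as written, your proof reduces the statement to an unproved (and, in the forms you propose, implausible) thinning claim at the central step.
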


Theorems \ref{thm.ppt} and \ref{thm.D9} combine to show that, assuming the existence of p-points, there is a large class of ultrafilters closed under Tukey reducibility which have basic, and hence continuous, Tukey reductions.

\begin{cor}\label{cor.DT}
If $\mathcal{U}$ is Tukey reducible to a  p-point,
then $\mathcal{U}$ has basic Tukey reductions.
\end{cor}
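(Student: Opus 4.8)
The corollary follows immediately by chaining the two preceding theorems, so the plan is simply to spell out that chaining and observe there is nothing more to do.

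Here is how I would argue. Suppose $\mathcal{U}$ is Tukey reducible to a p-point $\mathcal{W}$; that is, $\mathcal{W}\ge_T\mathcal{U}$ for some p-point $\mathcal{W}$. By Theorem \ref{thm.ppt} (the restatement of Theorem \ref{thm.5}), every p-point has basic Tukey reductions, so in particular $\mathcal{W}$ has basic Tukey reductions. Now apply Theorem \ref{thm.D9} with the p-point $\mathcal{W}$ in the role of ``$\mathcal{U}$'' there and our given $\mathcal{U}$ in the role of ``$\mathcal{W}$'': since $\mathcal{W}$ has basic Tukey reductions and $\mathcal{U}\le_T\mathcal{W}$, it follows that $\mathcal{U}$ also has basic Tukey reductions. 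This is exactly the assertion of the corollary.

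There is genuinely no obstacle here: the content of the corollary resides entirely in Theorems \ref{thm.ppt} and \ref{thm.D9}, and the proof is a one-line composition of those two results. The only thing worth double-checking is the bookkeeping on the direction of the Tukey reduction — Theorem \ref{thm.D9} propagates the property \emph{downward} under $\le_T$, and ``Tukey reducible to a p-point'' is precisely the hypothesis needed to invoke it, so the directions match. If one wanted to be slightly more explicit, one could also note (via the Remark following Definition \ref{defn.basicom}) that having basic Tukey reductions implies having continuous Tukey reductions, so the same chaining shows $\mathcal{U}$ has continuous Tukey reductions as well.

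\begin{proof}
Let $\mathcal{W}$ be a p-point with $\mathcal{W}\ge_T\mathcal{U}$. By Theorem \ref{thm.ppt}, $\mathcal{W}$ has basic Tukey reductions. Since $\mathcal{U}\le_T\mathcal{W}$, Theorem \ref{thm.D9} (applied with $\mathcal{W}$ in place of the ultrafilter with basic Tukey reductions and $\mathcal{U}$ in place of the ultrafilter Tukey reducible to it) yields that $\mathcal{U}$ also has basic Tukey reductions.
\end{proof}
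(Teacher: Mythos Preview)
Your proof is correct and is exactly the paper's approach: the corollary is stated immediately after the sentence ``Theorems \ref{thm.ppt} and \ref{thm.D9} combine to show\ldots'', and your argument is precisely that combination.
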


This raises the question of whether the class of ultrafilters Tukey reducible to a p-point is the same as or strictly contained in the class of ultrafilters which have basic Tukey reductions.

\begin{question}\label{q.ctsnotppoint}
Are there ultrafilters on base set $\om$ which have basic Tukey reductions and are not Tukey reducible to any p-point?
\end{question}

The proof of Theorem \ref{thm.D9} used the following Extension Theorem.

\begin{thm}[Dobrinen, Theorem 8 in \cite{Dobrinen10}]\label{thm.PropACtsMaps}
Suppose $\mathcal{U}$ and $\mathcal{V}$ are  ultrafilters,
$f:\mathcal{U}\ra\mathcal{V}$
is a  monotone cofinal map, and
there is a cofinal subset $\mathcal{D}\sse\mathcal{U}$ such that
  $f\re\mathcal{D}$ is basic.
Then there is a continuous, monotone $\tilde{f}:\mathcal{P}(\om)\ra\mathcal{P}(\om)$ 
such that
\begin{enumerate}
\item
 $\tilde{f}$
is basic on $\mathcal{P}(\om)$;
\item
 $\tilde{f}\re\mathcal{D}= f\re\mathcal{D}$; and
 \item $\tilde{f}\re\mathcal{U}:\mathcal{U}\ra\mathcal{V}$ is a cofinal map.
\end{enumerate}

Thus, $\mathcal{U}$ has basic Tukey reductions if and only if 
for every  monotone cofinal map $f:\mathcal{U}\ra\mathcal{V}$ there is some cofinal $\mathcal{D}\sse\mathcal{U}$ for which $f\re\mathcal{D}$ is basic.
\end{thm}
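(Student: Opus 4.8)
The plan is to prove the Extension Theorem (Theorem~\ref{thm.PropACtsMaps}) by directly building the extension $\tilde f$ from a basic representation of $f\re\mathcal{D}$. Suppose $f\re\mathcal{D}$ is basic; fix the strictly increasing sequence $(k_m)_{m<\om}$ and the level- and initial-segment-preserving monotone map $\hat f:D\ra 2^{<\om}$ witnessing this, where $D=\{\chi_X\re k_m:X\in\mathcal{D},\ m<\om\}$. The first step is to extend $\hat f$ from the (possibly sparse) set $D$ to \emph{all} of $2^{<\om}$, level by level, while preserving monotonicity and the initial-segment property. At level $k_m$, for a string $s\in 2^{k_m}$ not in $D$, I would define $\hat f^*(s):=\hat f(t)$ for the largest $t\in D\cap 2^{k_m}$ with $t\subseteq s$ if one exists (here viewing strings as characteristic functions of subsets of $\{0,\dots,k_m-1\}$, so $t\subseteq s$ means $t\le s$ coordinatewise), and otherwise take a suitable minimal value; one must also ensure compatibility with the level-$k_{m'}$ values for $m'>m$ by processing levels in increasing order and defining each $\hat f^*(s)$ as the coordinatewise join of $\hat f^*(s\re k_{m-1})$ (the value already assigned to the initial segment one level down) with whatever $\hat f$ prescribes. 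The key point is that monotonicity of $\hat f$ on $D$ together with the join construction keeps $\hat f^*$ monotone, level-preserving, and initial-segment-preserving on all of $2^{<\om}$, and $\hat f^*\re D=\hat f$.

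Once $\hat f^*$ is in hand, define $\tilde f:\mathcal{P}(\om)\ra\mathcal{P}(\om)$ by $\tilde f(X)=\bcup_{m<\om}\hat f^*(\chi_X\re k_m)$. By construction $\tilde f$ is basic on all of $\mathcal{P}(\om)$, hence continuous, which gives~(1); and since $\hat f^*$ agrees with $\hat f$ on $D$, we get $\tilde f\re\mathcal{D}=f\re\mathcal{D}$, which is~(2). Monotonicity of $\tilde f$ as a map on $\mathcal{P}(\om)$ is immediate from monotonicity of $\hat f^*$. For~(3), I would argue that $\tilde f\re\mathcal{U}:\mathcal{U}\ra\mathcal{V}$ is a cofinal map using Fact~\ref{fact.monotonemap}: since $\tilde f$ is monotone, it suffices to show its image is cofinal in $\mathcal{V}$. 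Here one uses that $\mathcal{D}$ is cofinal in $\mathcal{U}$, so for any $V\in\mathcal{V}$ there is $X\in\mathcal{D}$ with $f(X)\sse V$; then $\tilde f(X)=f(X)\sse V$, so the image of $\tilde f\re\mathcal{U}$ is already cofinal. One small wrinkle: one must check $\tilde f$ maps $\mathcal{U}$ into $\mathcal{V}$, i.e.\ $\tilde f(X)\in\mathcal{V}$ for all $X\in\mathcal{U}$, not just $X\in\mathcal{D}$; this follows because any $X\in\mathcal{U}$ has some $Y\in\mathcal{D}$ with $Y\sse X$, whence by monotonicity $\tilde f(X)\contains \tilde f(Y)=f(Y)\in\mathcal{V}$, so $\tilde f(X)\in\mathcal{V}$ by upward closure.

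The final ``Thus'' sentence of the theorem is then essentially a restatement: the forward direction of ``$\mathcal{U}$ has basic Tukey reductions iff $\dots$'' is the definition, and the reverse direction is exactly what was just proved together with the observation that a basic map on $\mathcal{P}(\om)$ restricts to a basic map on $\mathcal{U}$ in the sense of Definition~\ref{defn.basicom} (taking $\mathcal{D}=\mathcal{U}$ itself, or any cofinal subset). So no additional work is needed there.

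I expect the main obstacle to be the bookkeeping in the extension step: defining $\hat f^*$ on all of $2^{<\om}$ so that it simultaneously (a) extends $\hat f$, (b) stays monotone in the coordinatewise order on each level, and (c) respects initial segments across levels. The tension is that $\hat f$ is only defined on the sparse set $D$ of initial segments of characteristic functions of sets in $\mathcal{D}$, and an arbitrary $s\in 2^{k_m}$ may lie above several incomparable elements of $D\cap 2^{k_m}$ or above none; the coordinatewise-join recipe resolves this, but verifying that the joins never ``overshoot'' the level (i.e.\ $\hat f^*(s)\in 2^m$ when $s\in 2^{k_m}$) and never break the initial-segment property requires care. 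Everything else --- continuity of $\tilde f$, the cofinality argument via Fact~\ref{fact.monotonemap}, and the ``Thus'' rephrasing --- is routine once the extension is constructed.
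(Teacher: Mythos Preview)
This survey does not itself prove the theorem; it is cited from \cite{Dobrinen10}. Your approach---extend $\hat f$ to all of $\bigcup_m 2^{k_m}$ via coordinatewise joins processed level by level, then let $\tilde f$ be the map generated by the extension---is correct and is the natural one. One clarification: your first recipe (take ``the largest $t\in D\cap 2^{k_m}$ with $t\le s$'') does not work as stated, since no unique maximum need exist, but the join-based correction you immediately supply does. Explicitly, for $s\in 2^{k_m}$ set $\hat f^*(s)$ to be the coordinatewise join of the length-$m$ zero-extension of $\hat f^*(s\re k_{m-1})$ with all $\hat f(t)$ for $t\in D\cap 2^{k_m}$ with $t\le s$ (reading the empty join as the zero string). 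That $\hat f^*\re D=\hat f$ then follows from monotonicity of $\hat f$ on $D$ (so the join collapses to $\hat f(s)$ when $s\in D$), and initial-segment preservation of $\hat f^*$ uses that $D$ is closed under restriction to the levels $k_{m'}$ together with initial-segment preservation of $\hat f$, so that each $\hat f(t)\re(m-1)=\hat f(t\re k_{m-1})$ already appears in the join defining $\hat f^*(s\re k_{m-1})$. Your verification of (2) and (3), including the appeal to Fact~\ref{fact.monotonemap} and the use of upward closure of $\mathcal{V}$ to obtain $\tilde f''\mathcal{U}\sse\mathcal{V}$, is correct as written.
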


 The Extension Theorem says  that every basic Tukey reduction on a cofinal subset of an ultrafilter can be extended to a basic function on all of $\mathcal{P}(\om)$.
Key to this proof is that basic functions are level-preserving.
A run-of-the-mill continuous function on a cofinal subset of an ultrafilter does not a priori have to be level-preserving, since the cofinal set is not  compact. 
This leads to the question of whether or not basic and continuous Tukey reductions are the same.

\begin{question}\label{q.basicvscts}
Is there an ultrafilter on $\om$ which has continuous Tukey reductions but not basic Tukey reductions?
\end{question}

Recall that on the base set $\FIN$, there is a notion of stable ordered union ultrafilter which is the analogue of p-point for the space  of block sequences.
These ultrafilters also have continuous Tukey reductions.

\begin{thm}\label{thm.FINcanonical}
Suppose $\mathcal{U}$ is a block-basic ultrafilter on $\FIN$
and $\mathcal{V}$ is any ultrafilter on a countable index set $I$.
\begin{enumerate}
\item
(Theorem 71 in \cite{Dobrinen/Todorcevic11})
If  $\mathcal{U}\ge_T\mathcal{V}$,
then there is a monotone continuous map $f:\mathcal{P}(\FIN)\ra\mathcal{P}(I)$ such that
$f"\mathcal{U}$ is a cofinal subset of $\mathcal{V}$.
\item
(Theorem 72 in  \cite{Dobrinen/Todorcevic11})
If $\mathcal{U}_{\min,\max}\ge_T\mathcal{V}$,
then there are an infinite block sequence $\tilde{X}$ such that $[\tilde{X}]\in\mathcal{U}$ and
a monotone continuous function $f$ from $\{[X]_{\min,\max}:X\le \tilde{X}\}$ into $\mathcal{P}(I)$
whose restriction to $\{[X]_{\min,\max}:X\le \tilde{X},\ [X]\in\mathcal{U}\}$
has cofinal range in $\mathcal{V}$.
\end{enumerate}
\end{thm}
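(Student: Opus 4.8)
The plan is to follow the template of the p-point canonization theorems (Theorems~\ref{thm.5} and \ref{thm.ppt}), replacing the p-point diagonalization by the two equivalent incarnations of block-basicity supplied by Theorem~\ref{thm.blockbasicequiv}: the Ramsey property of $\mathcal{U}$, used to stabilize a cofinal map one coordinate at a time, and the fusion property~\ref{thm.blockbasicequiv}(2), used to amalgamate the stabilizations into a single block sequence. The ambient partition theorem throughout is Milliken's theorem for block sequences, which is what makes the Ramsey property of stable ordered union ultrafilters usable here; cf.\ \cite{Blass87}, \cite{Dobrinen/Todorcevic11}, \cite{TodorcevicBK10}.

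\emph{Part (1).} By Fact~\ref{fact.monotone} we may fix a monotone cofinal map $f:\mathcal{U}\ra\mathcal{V}$, and we fix an enumeration $I=\{i_j:j<\om\}$. The first step is a coordinatewise stabilization: for each $j$ I would produce a block sequence $X_j$ with $[X_j]\in\mathcal{U}$ so that, for every block sequence $Y\le X_j$ with $[Y]\in\mathcal{U}$, the truth value of ``$i_j\in f([Y])$'' is decided by the first $m_j$ blocks of $Y$. The leverage, exactly as in the p-point case, is monotonicity of $f$ together with $f([Y])\in\mathcal{V}$: as soon as $i_j\notin f([Z])$ for some $[Z]\in\mathcal{U}$ with $Z\le X_j$, monotonicity gives $i_j\notin f([Y])$ for all $[Y]\sse[Z]$, so the negative side of the decision is automatically absolute; the Ramsey property of $\mathcal{U}$ is then applied to thin $X_j$ so that the positive side depends only on a bounded number of initial blocks. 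The second step is the fusion: applying Theorem~\ref{thm.blockbasicequiv}(2) to a $\le^*$-refinement of $(X_j)_{j<\om}$ gives a single block sequence $\tilde X$ with $[\tilde X]\in\mathcal{U}$ and $\tilde X\le^* X_j$ for all $j$; after re-indexing $I$ and passing to a further $\le^*$-extension of $\tilde X$ one forces a growth schedule such as $m_j\le j$, which is precisely the continuity requirement. Finally I would define $f^*$ on $\{[Y]:Y\le\tilde X\}$ by reading off the stabilized values, check that $f^*$ is monotone, continuous, and agrees with $f$ on the cofinal subset $\{[Y]\in\mathcal{U}:Y\le\tilde X\}$ of $(\mathcal{U},\contains)$ — so the $f^*$-image of $\mathcal{U}$ is cofinal in $\mathcal{V}$, since it contains the $f$-image of a cofinal set — and then extend $f^*$ to a monotone continuous map on all of $\mathcal{P}(\FIN)$ by means of the finitary, level- and initial-segment-preserving map that generates it.

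\emph{Part (2).} One runs the same argument inside $\mathcal{U}_{\min,\max}$, now on the domain $\{[X]_{\min,\max}:X\le\tilde X\}$: given a monotone cofinal $f:\mathcal{U}_{\min,\max}\ra\mathcal{V}$, membership in $f([X]_{\min,\max})$ depends only on the $\min-\max$ data of the blocks of $X$, and the same stabilize-then-fuse procedure produces an infinite block sequence $\tilde X$ with $[\tilde X]\in\mathcal{U}$ and a monotone continuous $f$ on $\{[X]_{\min,\max}:X\le\tilde X\}$ whose restriction to $\{[X]_{\min,\max}:X\le\tilde X,\ [X]\in\mathcal{U}\}$ has cofinal range in $\mathcal{V}$. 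The shrinking is again powered by the Ramsey property of $\mathcal{U}$, together with the fact (Fact~\ref{facts.beginning}(3) and the discussion after Theorem~\ref{thm.blockbasicequiv}) that $\mathcal{U}_{\min,\max}$ is isomorphic to $\mathcal{U}_{\min}\cdot\mathcal{U}_{\max}$ with $\mathcal{U}_{\min}$ and $\mathcal{U}_{\max}$ selective; alternatively, part (2) can be deduced from the canonization of cofinal maps on iterated Fubini products of p-points (Theorem~\ref{thm.allFubProd_p-point_cts}) via this isomorphism.

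\emph{Main obstacle.} I expect the crux to be the stabilization-plus-fusion step. First, $f$ is an arbitrary monotone map, not a definable one, so one cannot simply colour block sequences by ``$i\in f([Y])$?'' and invoke Milliken's theorem on that colouring; the Ramsey property has to be interleaved with the monotonicity bookkeeping, as in the p-point argument, and one must verify that the auxiliary colourings that actually get thinned are of a form to which the partition theorem applies. Second, the fusion yields only $\tilde X\le^* X_j$, so the $\le^*$-versus-$\le$ gap must be absorbed by passing to the trace of $\mathcal{U}$ on $[\tilde X]$ and allowing the deciding initial blocks to start a bounded distance in; and, as in the p-point setting, extending the continuous map off the non-compact set $\{[Y]:Y\le\tilde X\}$ to all of $\mathcal{P}(\FIN)$ forces one to use the level-preserving finitary form of $f^*$ rather than mere continuity.
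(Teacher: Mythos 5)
Your overall skeleton --- stabilize membership decisions using monotonicity, fuse via Theorem~\ref{thm.blockbasicequiv}(2), read off a level- and initial-segment-preserving finitary map, and extend it to all of $\mathcal{P}(\FIN)$ --- is the right one and is the same template as the proof of Theorem~\ref{thm.5} that the cited argument in \cite{Dobrinen/Todorcevic11} adapts to $\FIN$. The genuine gap is the step you yourself flag as the crux and then leave unresolved: you assign the stabilization of the positive decisions to ``the Ramsey property of $\mathcal{U}$'' (Milliken-style thinning), but, as you observe, the colouring $Y\mapsto$ ``$i\in f([Y])$'' comes from an arbitrary monotone map and is not of a form to which Milliken's theorem or the ultrafilter partition property applies, and your sketch never exhibits any colouring of finite block sequences that actually gets thinned. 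The resolution is that no partition theorem is needed at all --- note that Theorem~\ref{thm.5} holds for arbitrary p-points, which carry no Ramsey-theoretic structure, so the template it provides cannot be partition-calculus based; the block analogue of the p-point property is exactly stability, i.e.\ \ref{thm.blockbasicequiv}(2). Concretely, one runs a single fusion: at stage $k$ there are only finitely many pairs $(s,i)$, where $s$ ranges over patterns built from the finitely many blocks frozen so far and $i$ over the first $k$ points of $I$; if some tail $Z$ with $[Z]\in\mathcal{U}$ has $i\notin f([s^{\frown}Z])$, shrink the current tail below that witness (monotonicity makes the negative decision permanent), and otherwise the positive decision holds for every further tail by default. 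Stability then fuses the stages into $\tilde X$, and for $Y\le\tilde X$ with $[Y]\in\mathcal{U}$ one verifies $i\in f([Y])$ iff the pair (trace of $Y$, $i$) was decided positively, by comparing $[Y]$ with $[s^{\frown}(Y/n_k)]$ and using $[Y/n_k]\in\mathcal{U}$. So the ``interleaving'' you worry about is nothing more than monotone witness-taking inside one fusion.

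Two smaller points. Your per-coordinate stabilization is phrased as ``decided by the first $m_j$ blocks of $Y$'' uniformly for all $Y\le X_j$; what continuity of a map on $\mathcal{P}(\FIN)$ requires, and what the construction delivers, is decision by the trace $[Y]\cap\mathcal{P}(n_j)$ below a fixed level $n_j$, and the decisions must be made for each finite initial pattern $s$ rather than once per coordinate --- so even a single coordinate already needs a fusion, which is why the argument treats all $i<k$ inside one fusion instead of stabilizing each coordinate completely and fusing afterwards (your two-stage organization can be repaired, but it duplicates the fusion). For part (2), your primary route (the same fusion run on the base $\{[X]_{\min,\max}:X\le\tilde X\}$) is the correct one; the ``alternative'' via Theorem~\ref{thm.allFubProd_p-point_cts} and $\mathcal{U}_{\min,\max}\cong\mathcal{U}_{\min}\cdot\mathcal{U}_{\max}$ is plausible but would still require translating the $\vec{\mathcal{U}}$-tree formulation into a continuous map defined on the specific family $\{[X]_{\min,\max}:X\le\tilde X\}$ with cofinality witnessed by the $[X]\in\mathcal{U}$, which amounts to the same bookkeeping you are trying to avoid.
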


We believe that this will be true for all ultrafilters associated with any of the topological Ramsey spaces $\FIN_k^{[\infty]}$.

\begin{question}
Does every  ultrafilter on $\FIN_k$ which is block basic for  $\FIN_k^{[\infty]}$ have continuous cofinal maps?
\end{question}

 Raghavan showed in \cite{Raghavan/Todorcevic12} that continuous Tukey reductions  suffice to prove the surprising fact that the Tukey and Rudin-Blass orders sometimes coincide.
Recall the Rudin-Blass ordering: $\mathcal{V}\le_{RB}\mathcal{U}$ if and only if there is a finite-to-one map $g:\om\ra\om$ such that $\mathcal{V}=g(\mathcal{U})$.
Note that the Rudin-Blass order is stronger than the Rudin-Keisler order; that is, if $\mathcal{V}\le_{RB}\mathcal{U}$, then $\mathcal{V}\le_{RK}\mathcal{U}$.

\begin{thm}[Raghavan, Theorem 10 in \cite{Raghavan/Todorcevic12}]\label{thm.RT}
Let $\mathcal{U}$ be any ultrafilter and let $\mathcal{V}$ be a q-point.
Suppose $f:\mathcal{U}\ra\mathcal{V}$ is continuous, monotone, and cofinal in $\mathcal{V}$.
Then  $\mathcal{V}\le_{RB}\mathcal{U}$.
\end{thm}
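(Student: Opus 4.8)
\emph{Reduction.} The plan is to manufacture, out of $f$, a finite-to-one $g\colon\om\ra\om$ with $g(\mathcal{U})=\mathcal{V}$. Since $g(\mathcal{U})$ is automatically an ultrafilter for any function $g$, and since $\{f(A):A\in\mathcal{U}\}$ is cofinal in $(\mathcal{V},\contains)$, it is enough to find a finite-to-one $g$ such that $g^{-1}(f(A))\in\mathcal{U}$ for every $A\in\mathcal{U}$; this gives $\mathcal{V}\sse g(\mathcal{U})$, hence $g(\mathcal{U})=\mathcal{V}$, hence $\mathcal{V}\le_{RB}\mathcal{U}$. Only monotonicity, continuity and cofinality of $f$, and the q-point property of $\mathcal{V}$, will enter.

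\emph{Finitary in-witnesses.} For finite $a\sse\om$ put $\phi(a)=\{n\in\om:(\forall B\in\mathcal{U})(a\sse B\to n\in f(B))\}$. Monotonicity of $f$ shows $\phi$ is $\sse$-monotone; it also shows that if $n\in f(A)$ and $m$ is a continuity modulus at $A$ deciding the $n$-th output bit among members of $\mathcal{U}$ agreeing with $A$ below $m$, then $a:=A\cap m$ satisfies $n\in\phi(a)$ (given $B\contains a$ in $\mathcal{U}$, the set $B\setminus(m\setminus a)$ agrees with $A$ below $m$, lies in $\mathcal{U}$, and is $\sse B$, so $n\in f(B)$ by monotonicity). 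Combined with the trivial converse, this gives, for every $A\in\mathcal{U}$,
$$f(A)=\bcup\{\phi(a):a\in[A]^{<\om}\}.$$
Hence each $n\in f(\om)$ has finite in-witnesses $a$ with $n\in\phi(a)$, and these are closed upward under $\sse$.

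\emph{The recursion, the q-point, and the map.} Abbreviate $\phi(K)=\phi(\{0,\dots,K-1\})$; by monotonicity $\langle\phi(K):K<\om\rangle$ increases to $f(\om)$. I would build by recursion a strictly increasing $0=k_0<k_1<\cdots$ with intervals $I_j=[k_j,k_{j+1})$, choosing each $k_{j+1}$ large enough relative to the finitely many configurations already in play so that the colour classes $C_j:=\phi(k_{j+1})\setminus\phi(k_j)$ are \emph{finite} and so that elements of $C_j$ have in-witnesses that reach back into earlier intervals only in a controlled way. Since $\mathcal{V}$ is a q-point and the finite sets $C_j$ partition $f(\om)\in\mathcal{V}$, there is $W\in\mathcal{V}$ with $|W\cap C_j|\le1$ for each $j$; a further thinning of $W$ inside $\mathcal{V}$, again using that $\mathcal{V}$ is a q-point (applied to a suitable coarsening of $\langle C_j\rangle$), should leave the in-witnesses $w_n$ ($n\in W$) pairwise disjoint, so that $g$ can be defined unambiguously by $g(j)=n$ when $j\in w_n$, extended to a finite-to-one map off $\bcup_{n\in W}w_n$. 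To verify the reduction: given $A\in\mathcal{U}$, cofinality of $f$ together with the recursive properties of the $k_j$ should yield $A'\in\mathcal{U}$ with $w_n\sse A$ for every $n\in W$ with $w_n\cap A'\ne\emptyset$; then for $j\in A'$ we have $j\in w_n$ with $w_n\sse A$, so $g(j)=n\in\phi(w_n)\sse f(A)$, i.e.\ $A'\sse g^{-1}(f(A))$, and $g^{-1}(f(A))\in\mathcal{U}$ as needed.

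\emph{Main obstacle.} Essentially all the work is in the recursion that produces $\langle I_j\rangle$, and it has two genuinely hard features. First, because $\mathcal{U}$ need not be a p-point there is no uniform continuity modulus, so it is not obvious that the colour classes $C_j$ can be kept finite; making this go through uses that $f$ is cofinal \emph{into an ultrafilter} --- if $\mathcal{V}$ were only the Fr\'echet filter the $C_j$ could be forced to be infinite. Second, in-witnesses need not localise inside a single interval, so the intervals must be chosen to bound how far witnesses reach back, and it is exactly at this point that the q-point property of $\mathcal{V}$ is used, to pass to a subset of $\mathcal{V}$ on which the residual overlap of witnesses disappears and the finitary structure collapses to an honest finite-to-one map. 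I expect that carrying out this recursion carefully is the crux of the proof.
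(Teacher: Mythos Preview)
This survey does not itself contain a proof of the theorem --- it merely cites Raghavan's original paper --- so there is no in-paper argument to compare against; I evaluate your outline on its own terms.

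Your finitary representation $f(A)=\bigcup_{a\in[A]^{<\omega}}\phi(a)$ is correct and is the right starting point. The genuine gap is the next step. You propose to choose $k_0<k_1<\cdots$ so that the pieces $C_j=\phi(k_{j+1})\setminus\phi(k_j)$ are finite, and then feed the partition $\{C_j\}$ to the q-point property of $\mathcal V$. But $\phi$ is $\sse$-monotone in its argument, so enlarging $k_{j+1}$ only enlarges $\phi(k_{j+1})$ and hence $C_j$; once some $\phi([0,k))$ is infinite, no recursion on the $k_j$ can make the pieces finite. And this does happen, even with $f$ cofinal into a q-point. Let $\mathcal U$ be any q-point, let $\mathcal V$ be its isomorphic copy on the even integers (via $n\mapsto 2n$), and set
\[
f(B)\ =\ \{2n:n\in B\}\ \cup\ \{\,2n+1:n<\om\text{ and }0\in B\,\}.
\]
Then $f$ is continuous and monotone and maps $\mathcal U$ cofinally into $\mathcal V$ (for $V=\{2n:n\in A\}\in\mathcal V$ take $B=A\setminus\{0\}$), yet $\phi(\{0\})$ contains every odd integer, so already $C_0=\phi(k_1)\setminus\phi(\emptyset)=\phi(k_1)$ is infinite for every choice of $k_1\ge 1$. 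Your stated hope that ``cofinality into an ultrafilter'' is what forces the $C_j$ to be finite is therefore wrong: the infinite overflow lives in the odd integers, a set whose complement lies in $\mathcal V$, so it never obstructs cofinality, but it wrecks your partition.

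What your outline lacks is a mechanism for discarding this $\mathcal V$-null overflow \emph{before} the q-point property can be applied. In the example one can pass from $\om$ to $\om\setminus\{0\}\in\mathcal U$ as the base point for the witnesses and then every level becomes finite; but nothing in your recursion produces such a move, since the obstruction sits at a single fixed level independent of any later choices of $k_j$. The second obstacle you flag --- arranging disjoint witnesses $w_n$ and verifying $g^{-1}(f(A))\in\mathcal U$ --- is also real and is left entirely as an ``I expect'', but the first gap already blocks the plan as written.
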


Corollary \ref{cor.DT} and
Thorem \ref{thm.RT} combine to  yield the following important picture of the structure of Tukey types of q-points.

\begin{cor}\label{cor.DRT}
Suppose $\mathcal{W}$ is Tukey reducible to a p-point.
Then  every q-point Tukey reducible to $\mathcal{W}$ is in fact Rudin-Blass reducible to $\mathcal{W}$ and hence is selective.
\end{cor}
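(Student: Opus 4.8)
The statement to prove is: if $\mathcal{W}$ is Tukey reducible to a p-point, then every q-point Tukey reducible to $\mathcal{W}$ is in fact Rudin--Blass reducible to $\mathcal{W}$, hence selective. This is a short deduction that chains together two results already in hand: Corollary \ref{cor.DT} and Theorem \ref{thm.RT}. The plan is to unpack the hypotheses, produce a continuous monotone cofinal witness, feed it into Raghavan's theorem, and then observe that Rudin--Blass reducibility to any ultrafilter forces a q-point to be selective.

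First I would let $\mathcal{V}$ be an arbitrary q-point with $\mathcal{V}\le_T\mathcal{W}$. Since Tukey reducibility is transitive and $\mathcal{W}$ is Tukey reducible to a p-point, also $\mathcal{V}\le_T(\text{some p-point})$; in particular, by Corollary \ref{cor.DT}, $\mathcal{W}$ has basic, hence continuous, Tukey reductions. From $\mathcal{V}\le_T\mathcal{W}$ and Fact \ref{fact.monotone}, fix a monotone cofinal map $f:\mathcal{W}\ra\mathcal{V}$. Because $\mathcal{W}$ has continuous Tukey reductions, there is a cofinal subset $\mathcal{D}\sse\mathcal{W}$ on which $f$ is continuous. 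Restricting attention to $\mathcal{D}$ — which is Tukey equivalent to $\mathcal{W}$ by Fact \ref{fact.cofinalTukeysame} — I would want a genuinely continuous monotone cofinal map $g$ defined on an ultrafilter, not merely on a cofinal subset; here I would invoke the Extension Theorem \ref{thm.PropACtsMaps} (applicable since, by Corollary \ref{cor.DT}, $f\re\mathcal{D}$ may be taken basic) to extend $f\re\mathcal{D}$ to a continuous monotone $\tilde f:\mathcal{P}(\om)\ra\mathcal{P}(\om)$ whose restriction $\tilde f\re\mathcal{W}:\mathcal{W}\ra\mathcal{V}$ is still cofinal.

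Now apply Theorem \ref{thm.RT} with this $\mathcal{U}:=\mathcal{W}$, the q-point $\mathcal{V}$, and the map $\tilde f\re\mathcal{W}$, which is continuous, monotone, and cofinal in $\mathcal{V}$: the conclusion is $\mathcal{V}\le_{RB}\mathcal{W}$. For the final clause, recall that $\mathcal{V}\le_{RB}\mathcal{W}$ means $\mathcal{V}=g(\mathcal{W})$ for some finite-to-one $g:\om\ra\om$; a standard fact (the finite-to-one image of any ultrafilter, when $\mathcal{V}$ is a q-point, splits each fiber of $g$ into a one-element trace on a set in $\mathcal{V}$) shows $g$ is one-to-one on a set in $\mathcal{W}$, so $\mathcal{V}\equiv_{RK}\mathcal{W}\re(\text{that set})$ and the q-point property is inherited; but more directly, being Rudin--Blass below anything forces a q-point to be selective — a q-point which is Rudin--Blass minimal-type in this sense is a p-point, and a p-point q-point is selective by the Theorem following Definition \ref{defn.uftypes}. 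I would cite this last implication from \cite{BlassHB} rather than reprove it.

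The main obstacle is the bookkeeping at the boundary between "continuous on a cofinal subset'' and "continuous monotone cofinal on the whole ultrafilter,'' since Theorem \ref{thm.RT} as stated wants an honest map $f:\mathcal{U}\ra\mathcal{V}$; the Extension Theorem \ref{thm.PropACtsMaps} is precisely the tool that removes this friction, so the only real care needed is verifying its hypotheses apply (which they do, via Corollary \ref{cor.DT} giving \emph{basic} — not merely continuous — reductions). Everything else is a routine concatenation.
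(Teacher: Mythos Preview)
Your derivation of $\mathcal{V}\le_{RB}\mathcal{W}$ is correct and is exactly the intended combination of Corollary~\ref{cor.DT} and Theorem~\ref{thm.RT}; your care in invoking Theorem~\ref{thm.PropACtsMaps} to obtain a genuinely continuous monotone cofinal map on all of $\mathcal{W}$ is appropriate.

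The gap is in the ``hence selective'' clause. Your claim that ``being Rudin--Blass below anything forces a q-point to be selective'' is false, and the attempted justification is misdirected: the fibers of the finite-to-one map $g$ partition the \emph{domain} $\om$ (where $\mathcal{W}$ lives), while the q-point property of $\mathcal{V}$ gives selectors for finite partitions of the \emph{codomain}. There is no reason $\mathcal{W}$ should contain a set on which $g$ is one-to-one, and in general $\mathcal{V}\le_{RB}\mathcal{W}$ with $\mathcal{V}$ a q-point and $\mathcal{W}$ arbitrary does not make $\mathcal{V}$ a p-point.

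The correct route uses the p-point hypothesis a second time. You already observed that $\mathcal{V}\le_T\mathcal{U}$ for the p-point $\mathcal{U}$. Run the identical argument (Theorem~\ref{thm.ppt} plus Theorem~\ref{thm.RT}) with $\mathcal{U}$ in place of $\mathcal{W}$ to obtain $\mathcal{V}\le_{RB}\mathcal{U}$, hence $\mathcal{V}\le_{RK}\mathcal{U}$. Now use the standard fact that any Rudin--Keisler image of a p-point is a p-point: if $\mathcal{V}=h(\mathcal{U})$ and $\{V_n\}\sse\mathcal{V}$, take $U\in\mathcal{U}$ with $U\sse^* h^{-1}(V_n)$ for all $n$; then $h''U\in\mathcal{V}$ and $h''U\setminus V_n\sse h''(U\setminus h^{-1}(V_n))$ is finite. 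Thus $\mathcal{V}$ is a p-point, and being also a q-point, it is selective.
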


This shows that at least the structure of q-points Tukey reducible to some p-point is quite simple.

There are ultrafilters which are not Tukey maximal which do not have continuous Tukey reductions.
As Raghavan shows in Corollary 11 in \cite{Raghavan/Todorcevic12},
the Fubini product of non-isomorphic ultrafilters does not have continuous Tukey reductions.
However, a weaker form of canonical maps can still be obtained for basically generated ultrafilters and other types of Tukey non-maximal ultrafilters as we review below.

\begin{defn}[Definition 3 in \cite{Dobrinen10}]\label{defn.finitelygen}
We say that an ultrafilter 
$\mathcal{U}$ on a countable base $B$ 
\em
has finitary Tukey reductions \rm if the following holds:
For every ultrafilter $\mathcal{V}$ on a countable base $C$, whenever $f:\mathcal{U}\ra\mathcal{V}$ 
is a monotone cofinal map,
there is
 a cofinal subset $\mathcal{D}\sse\mathcal{U}$
and 
 a  function $\hat{g}: [B]^{<\om}\ra [C]^{<\om}$,
such that 
\begin{enumerate}
\item
$\hat{g}$ is monotone: $s\sse t\ra \hat{g}(s)\sse \hat{g}(t)$; 
 and
\item
$\hat{g}$ generates $f$ on $\mathcal{D}$:
For each $X\in\mathcal{D}$,
$f(X)=\bigcup_{k<\om}\hat{g}(X\cap B\re k)$,
\end{enumerate}
where $B\re k$ denotes the first $k$ members of $B$ under some fixed enumeration of $B$ in order type $\om$.
A map $f$ generated by such a $\hat{g}$ is called {\em finitely generated}.
\end{defn}

Note that every continuous map is finitary, but not vice versa.
The qualitative difference is that a finitary map may very well not be level-preserving.
Still, the property of having finitary Tukey reductions guarantees that the ultrafilter is not Tukey maximal:
It is clear that there are only continuum many finitely generated cofinal maps on a given ultrafilter;
so if an ultrafilter has finitary Tukey reductions, then its Tukey type, and the Tukey type of every ultrafilter Tukey reducible to it, has cardinality continuum.

In \cite{Raghavan/Todorcevic12},
Raghavan  proved that basically generated ultrafilters have finitary Tukey reductions.

\begin{thm}[Raghavan, 
Lemmas 8 and  16 of \cite{Raghavan/Todorcevic12}]\label{lem.16R}
Let $\mathcal{U}$ be basically generated by a base $\mathcal{B}$.
Let  $\phi:\mathcal{B}\ra\mathcal{V}$ be  a monotone  map, and 
define $\psi_{\phi}(A)=\bigcap\{\phi(X):X\in\mathcal{B}$ and $X\sse A\}$.
Then there is some $U\in\mathcal{U}$ such that 
$\psi_{\phi}(A)$ is finite for all $A\in\mathcal{B}\re U$.
Moreover, for every $A\in\mathcal{B}$,
$\bigcup_{s\in[B]^{<\om}}\psi_{\phi}(s)\ne\emptyset$.
\end{thm}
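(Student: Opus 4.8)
The plan is to unpack the definition of basically generated and extract finiteness from the defining convergent-subsequence property, using a fusion/diagonalization argument together with the combinatorial characterization of non-maximal Tukey type (Fact~\ref{fact.Tukeytopchar}). Let me set up notation first: $\mathcal{U}$ is basically generated by a filter base $\mathcal{B}$ which we may assume is closed under the relevant operations, $\phi:\mathcal{B}\to\mathcal{V}$ is monotone (meaning $X\supseteq Y$ in $\mathcal{B}$ implies $\phi(X)\supseteq\phi(Y)$, since for ultrafilters monotone cofinal maps go the right way), and $\psi_\phi(A)=\bigcap\{\phi(X):X\in\mathcal{B},\ X\subseteq A\}$. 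Note $\psi_\phi(A)\in\mathcal{V}$ whenever the intersection is over finitely many sets, but a priori it is just a subset of the base of $\mathcal{V}$; the whole point of the lemma is that after restricting to a cofinal piece $\mathcal{B}\restriction U$, this intersection stabilizes to a finite set.

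\textbf{Main argument.} First I would argue by contradiction: suppose for every $U\in\mathcal{U}$ there is some $A\in\mathcal{B}\restriction U$ with $\psi_\phi(A)$ infinite. Using this I want to build a sequence $\langle X_n:n<\om\rangle$ in $\mathcal{B}$ witnessing a failure of the basically generated property, or else witnessing that $\mathcal{V}$ has maximum Tukey type via Fact~\ref{fact.Tukeytopchar}. The key observation is that if $\psi_\phi(A)$ is infinite, then by definition of $\psi_\phi$ as an intersection of the $\phi(X)$ over $X\in\mathcal{B}$, $X\subseteq A$, there are witnesses $X_0\supseteq X_1\supseteq\cdots$ in $\mathcal{B}\restriction A$ (or at least a family) such that the $\phi(X_k)$ shrink without their intersection becoming finite — but since each $\phi(X_k)\in\mathcal{V}$ and $\mathcal{V}$ is a filter, an infinite decreasing intersection staying in $\mathcal{V}$ would already be a p-point-like phenomenon. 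The cleaner route: pick the witnessing $X_k$ so that $\phi(X_k)$ is strictly decreasing and choose points $c_k\in\phi(X_k)\setminus\phi(X_{k+1})$; monotonicity of $\phi$ lets us arrange $X_k\to X$ for some $X\in\mathcal{B}$ (shrink enough at each stage and diagonalize through a fixed enumeration so the characteristic functions converge). Then the basically generated property applied to $\langle X_k\rangle$ gives a subsequence whose $\phi$-images... actually it gives $\bigcap_k X_{n_k}\in\mathcal{U}$, and then applying $\phi$ and monotonicity forces $\phi(\bigcap_k X_{n_k})\subseteq\bigcap_k\phi(X_{n_k})$, which would have to be in $\mathcal{V}$ since it is a $\phi$-image of a member of $\mathcal{B}$ cofinal below — but $\bigcap_k\phi(X_{n_k})$ omits infinitely many of the $c_k$, and one must derive a contradiction with the infinitude of $\psi_\phi$ by a density/cofinality count, or rather the contradiction is that this forces $\psi_\phi$ on a cofinal set to be pseudo-intersected inside $\mathcal{V}$, collapsing the infinite intersection. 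The cleanest packaging is: the basically generated property says exactly that sequences converging to a base element have a subsequence with intersection in $\mathcal{U}$, hence (by monotonicity) with $\phi$-image-intersection in $\mathcal{V}$, hence finite-to-cofinite stable, contradicting that we chose the $X_k$ so no subsequence has this. This yields a $U\in\mathcal{U}$ with $\psi_\phi(A)$ finite for all $A\in\mathcal{B}\restriction U$.

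\textbf{The ``moreover'' clause.} For the statement that for every $A\in\mathcal{B}$, $\bigcup_{s\in[B]^{<\om}}\psi_\phi(s)\ne\emptyset$ — here I read $\psi_\phi(s)$ for finite $s$ as $\bigcap\{\phi(X):X\in\mathcal{B},\ X\subseteq s\}$, which since no infinite $X\in\mathcal{B}$ sits below a finite $s$, is by convention the whole base $C$, unless we instead mean $\psi_\phi$ extended by $\psi_\phi(s)=\bigcap\{\phi(X):X\in\mathcal{B},\ X\cap B\restriction|s|=s\}$ or similar. The intended content is that the finitary approximations $\psi_\phi(X\cap B\restriction k)$ are eventually nonempty and build up $f(X)$, so the cheap proof is: since $\phi(X)\in\mathcal{V}$ is nonempty for every $X\in\mathcal{B}$, and $X=\bigcup_k(X\cap B\restriction k)$, monotonicity gives $\phi(X)\subseteq\bigcup_k\psi_\phi(X\cap B\restriction k)$ up to the convention, so the union is nonempty; one just needs $\psi_\phi(s)$ for the relevant finite initial segments to be the restriction of the honest intersection. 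I would make the conventions explicit at the start and then this clause is a one-line monotonicity check.

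\textbf{Main obstacle.} The delicate point is the construction of the converging sequence $\langle X_k\rangle$ inside $\mathcal{B}$: I need simultaneously (i) the $\phi(X_k)$ to shrink enough that $\bigcap_k\phi(X_k)$ is infinite yet has no subsequence pseudo-intersecting stably, (ii) the $X_k$ to converge in the Cantor-space sense to some $X\in\mathcal{B}$ (requiring a fusion through a fixed $\om$-enumeration of $B$ while staying in the base $\mathcal{B}$ — here I need $\mathcal{B}$ to be reasonably closed, or to work with a cofinal refinement), and (iii) monotonicity of $\phi$ to be compatible with all of this. Reconciling ``shrink the image a lot'' with ``converge to a base element'' against monotonicity is exactly where one must be careful, and I expect this bookkeeping — not the idea — to be the bulk of the work. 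I would lean on the fact that $\mathcal{U}$ basically generated lets us pass to a witnessing base closed enough for the fusion, and cite Fact~\ref{fact.Tukeytopchar} only if the direct contradiction with the basically generated property proves awkward.
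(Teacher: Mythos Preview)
This survey paper does not supply a proof of the theorem; it merely states the result and attributes it to Lemmas~8 and~16 of \cite{Raghavan/Todorcevic12}. So there is no paper-proof to compare against, and your proposal has to be judged on its own.

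There is a genuine gap in your main argument, and it is not a bookkeeping issue but a conceptual one. You assume $\psi_\phi(A)$ is infinite and then build $X_k\in\mathcal{B}\restriction A$ with $\phi(X_k)$ strictly decreasing, choosing $c_k\in\phi(X_k)\setminus\phi(X_{k+1})$. But note what ``$\psi_\phi(A)$ infinite'' actually says: it is a \emph{lower} bound on the $\phi(X)$'s, since every $\phi(X)$ for $X\in\mathcal{B}\restriction A$ contains the fixed infinite set $\psi_\phi(A)$. So after you apply the basically generated property and obtain $\bigcap_k X_{n_k}\in\mathcal{U}$, take $B\in\mathcal{B}$ with $B\subseteq\bigcap_k X_{n_k}$; monotonicity gives $\phi(B)\subseteq\bigcap_k\phi(X_{n_k})$, and $\phi(B)\in\mathcal{V}$. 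This is entirely consistent: $\bigcap_k\phi(X_{n_k})$ still contains the infinite set $\psi_\phi(A)$, and there is nothing preventing $\phi(B)$ from sitting inside it. Losing the $c_k$'s along the way is irrelevant---you never arranged that the $c_k$'s had to survive, and they were chosen \emph{outside} $\psi_\phi(A)$ anyway. Your sentence ``we chose the $X_k$ so no subsequence has this'' is exactly the missing step: you did not (and, as set up, cannot) arrange that.

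The hypothesis you need to exploit is the \emph{anti}-monotonicity of $\psi_\phi$: if $A'\subseteq A$ then $\psi_\phi(A')\supseteq\psi_\phi(A)$, so the ``bad'' set $\{A\in\mathcal{B}:\psi_\phi(A)\text{ infinite}\}$ is downward closed and, under the negation of the conclusion, cofinal in $\mathcal{B}$. The contradiction in Raghavan's argument does not come from shrinking the $\phi$-images; it comes from building a converging sequence $A_n\to A$ in $\mathcal{B}$ so that \emph{no} subsequence can have intersection in $\mathcal{U}$, directly violating the defining property of basically generated. To do that you must use the assumed infinitude of $\psi_\phi$ to diagonalize against all possible pseudo-intersections---the opposite of what your construction does. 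I would suggest going back to Lemma~16 of \cite{Raghavan/Todorcevic12} for the precise mechanism; your fusion instinct is right, but the quantity you are trying to shrink is pointed the wrong way. (On the ``moreover'' clause: you are correct that the statement as printed is garbled---the intended meaning is the one in Raghavan's Lemma~8, and your reading via finite initial segments is the right one.)
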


He then  used this to show that Tukey reduction can be framed as Rudin-Keisler reduction in the following sense.
Given an ultrafilter $\mathcal{U}$ on $\om$ and a subset $P\sse\FIN$, define $\mathcal{U}(P)$ to be 
$\{A\sse P:\exists X\in \mathcal{U}( P\cap [X]^{<\om}\sse A)\}$.
If for all $X\in\mathcal{U}$, $P\cap [X]^{<\om}$ is infinite,
then $\mathcal{U}(P)$ is a proper, non-principal filter on base set $P$.

\begin{thm}[Raghavan, Theorem 17 in \cite{Raghavan/Todorcevic12}]\label{thm.RbgRK}
Let $\mathcal{U}$ be basically generated by base $\mathcal{B}\sse\mathcal{U}$.
Let $\mathcal{V}$ be an arbitrary ultrafilter.
Then there is a subsets $P\sse\FIN$ such that 
\begin{enumerate}
\item
For all $s,t\in P$, $s\sse t$ implies $s=t$;
\item
$\mathcal{U}(P)\equiv_T\mathcal{U}$; and
\item
$\mathcal{V}\le_{RK}\mathcal{U}(P)$.
\end{enumerate}
\end{thm}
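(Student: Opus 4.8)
The plan is to turn a monotone cofinal map $f:\mathcal{U}\to\mathcal{V}$ into a single combinatorial object $P\subseteq\FIN$ that recodes $\mathcal{U}$ Rudin--Keisler-faithfully while absorbing $\mathcal{V}$, the key tool being the finitization of $f$ furnished by Theorem~\ref{lem.16R}. (The statement is nonvacuous only when $\mathcal{U}\ge_T\mathcal{V}$, since conclusions (2) and (3) together already force $\mathcal{V}\le_T\mathcal{U}$; this is what lets us begin with such an $f$, which exists by Fact~\ref{fact.monotone}.) First I would set up the finitary reduction: put $\phi=f\re\mathcal{B}$, so $\phi$ is monotone and, because $\mathcal{B}$ is a filter base for $\mathcal{U}$, the image $\phi''\mathcal{B}$ is a filter base for $\mathcal{V}$. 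Applying Theorem~\ref{lem.16R} to $\phi$ yields some $U\in\mathcal{U}$ on which the finitization $\psi_\phi$ is finite-valued; replacing $\mathcal{B}$ by $\mathcal{B}\re U$ --- still a base for $\mathcal{U}$, and harmless since $(\mathcal{U},\contains)\equiv_T(\mathcal{U}\re U,\contains)$ by Fact~\ref{fact.cofinalTukeysame} --- we may assume $\psi_\phi$ is everywhere finite-valued, and that on a cofinal $\mathcal{D}\subseteq\mathcal{B}$ the value $\phi(X)$ is reconstructed from its finite traces $\psi_\phi(s)$, $s\in[X]^{<\om}$, in the sense supplied by that theorem. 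Writing $V_X:=\bigcup\{\psi_\phi(s):s\in[X]^{<\om}\}$, the family $\{V_X:X\in\mathcal{D}\}$ is then a filter base for $\mathcal{V}$ and each $V_X\in\mathcal{V}$.

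Next I would build $P$ via a coding of pairs from $\FIN\times\om$ into $\FIN$. For every $X\in\mathcal{D}$, every $n\in X$, and every $j\in V_X$, choose a finite set $s\subseteq X$ with $\max s=n$ and $j\in\psi_\phi(s)$ (possible by the previous paragraph) and place into $P$ a finite set coding $(s,j)$, arranging the coding so that: (a) the coded sets are pairwise $\subseteq$-incomparable; (b) each coded set built from such a triple is itself a subset of the $X$ it came from, so that for every $X\in\mathcal{U}$ infinitely many coded sets lie in $[X]^{<\om}$ (using that $\mathcal{D}$ is cofinal in $\mathcal{U}$ and the $\psi_\phi(s)$ are finite), whence $\mathcal{U}(P)$ is a genuine nonprincipal filter on $P$; and (c) from each coded set $p$ one reads off a \emph{decoding tag} $\pi(p)=n\in p$ and a \emph{reduction tag} $h(p)=j$, in such a way that for $X\in\mathcal{D}$ the coded sets inside $[X]^{<\om}$ have $\pi$-image exactly $X$ and $h$-image exactly $V_X$.

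Then I would verify the three conclusions. (1) is (a). For (2): the monotone map $X\mapsto P\cap[X]^{<\om}$ has range a filter base for $\mathcal{U}(P)$, hence cofinal, so $\mathcal{U}(P)\le_T\mathcal{U}$ by Fact~\ref{fact.monotonemap}; conversely, by (b) and (c), $\pi''(P\cap[X]^{<\om})$ always lies between some member of $\mathcal{D}$ and $X$ itself, so $\pi^{-1}(Z)\in\mathcal{U}(P)$ holds exactly when $Z\in\mathcal{U}$, i.e.\ $\pi(\mathcal{U}(P))=\mathcal{U}$; exactly as in Fact~\ref{fact.TRK} this gives a monotone cofinal map $A\mapsto\pi''A$ from $\mathcal{U}(P)$ onto a cofinal subset of $\mathcal{U}$, so $\mathcal{U}\le_T\mathcal{U}(P)$, and $\mathcal{U}(P)\equiv_T\mathcal{U}$. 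For (3): by (b) and (c), $h''(P\cap[X]^{<\om})$ contains $V_Y$ for some $Y\in\mathcal{D}$ (so lies in $\mathcal{V}$) and equals $V_X\in\mathcal{V}$ when $X\in\mathcal{D}$; since $\{V_X:X\in\mathcal{D}\}$ is a filter base for $\mathcal{V}$, it follows that $h(\mathcal{U}(P))=\mathcal{V}$, i.e.\ $\mathcal{V}\le_{RK}\mathcal{U}(P)$.

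The main obstacle is the coding in the second step: one must design it so that $P$ is simultaneously a $\subseteq$-antichain, has $P\cap[X]^{<\om}$ infinite for \emph{all} $X\in\mathcal{U}$ (not merely for $X\in\mathcal{D}$) --- this is exactly what makes $\mathcal{U}(P)$ a bona fide nonprincipal filter --- and carries two tag-functions whose pushforwards are precisely $\mathcal{U}$ and precisely $\mathcal{V}$. The leverage for the $\mathcal{V}$-half is entirely Theorem~\ref{lem.16R}: it is the finiteness of $\psi_\phi$ on the base, together with the reconstruction of $\phi$ from its finite traces on a cofinal set --- that is, the fact that basically generated ultrafilters have finitary Tukey reductions --- that allows the reduction tag $h$ to be an honest function on $P$ rather than a multivalued assignment, which is what Rudin--Keisler reducibility demands.
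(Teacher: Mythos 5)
The survey itself contains no proof of this theorem (it is quoted from Raghavan's paper), so the only comparison available is with the route the surrounding text indicates, namely deriving it from the finitization result, Theorem~\ref{lem.16R}; your proposal does follow that route, and parts of it are sound. In particular, you are right that the hypothesis $\mathcal{V}\le_T\mathcal{U}$, omitted from the survey's statement, is forced by (2) and (3) and is part of the original theorem, and your verification scheme for (1)--(3) from properties of $P$ is essentially correct --- indeed it is stronger than necessary: $\pi(\mathcal{U}(P))=\mathcal{U}$ already follows from $\pi(p)\in p$ for all $p\in P$ together with properness of $\mathcal{U}(P)$ (a proper filter containing an ultrafilter equals it), and $h(\mathcal{U}(P))=\mathcal{V}$ already follows from $h(p)\in\psi_\phi(p)$ for all $p\in P$, properness, and cofinality of $\phi''\mathcal{B}$ in $\mathcal{V}$, so the ``exactly $X$'' and ``exactly $V_X$'' clauses of your desiderata can be dropped.

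The genuine gap is that the central object is never built: everything is delegated to a coding satisfying (a)--(c), which you yourself flag as ``the main obstacle,'' but that construction \emph{is} the theorem. Two concrete problems. First, the properties you extract from Theorem~\ref{lem.16R} exceed what it states: as quoted it gives finiteness of $\psi_\phi$ on $\mathcal{B}\re U$ and nonemptiness of a single union, not that $V_X=\bigcup_{s\in[X]^{<\om}}\psi_\phi(s)$ lies in $\mathcal{V}$, nor that $\{V_X:X\in\mathcal{D}\}$ is a base of $\mathcal{V}$, nor any ``reconstruction of $\phi$ from finite traces'' on a cofinal set. The fact that $\bigcup_{s\in[A]^{<\om}}\psi_\phi(s)\in\mathcal{V}$ for base sets $A$ --- which is what drives both the infinitude of $P\cap[X]^{<\om}$ for every $X\in\mathcal{U}$ and the availability of the tag values --- must itself be proved from basic generation (a convergence/diagonalization argument over a base closed under finite intersections); this is exactly the content of Raghavan's Lemmas 8 and 16 that your write-up treats as already in hand. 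Second, even granting that lemma, you have not shown that your desiderata are jointly realizable: producing a $\sse$-antichain $P$ such that below every $X\in\mathcal{U}$ there are infinitely many $p\in P$ with $\psi_\phi(p)\ne\emptyset$, while carrying the two tag functions, is the nontrivial combinatorial step, and naive attempts fail (for instance, taking all $\sse$-minimal $s$ with $\psi_\phi(s)\ne\emptyset$ gives an antichain but may be finite below a given base set, so an additional growth condition on the chosen sets is required). Until $P$ is actually constructed and its properties verified, conclusions (1)--(3) rest on an unverified existence claim.
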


Thus, every ultrafilter Tukey reducible to some basically generated ultrafilter $\mathcal{U}$ is in fact Rudin-Keisler below a canonically constructed ultrafilter Tukey equivalent to $\mathcal{U}$.

Closing the class of p-points under countable Fubini products produces the class of ultrafilters which we call {\em Fubini iterates of p-points}. 
Recall that every Fubini iterate of p-points is basically generated, and it is open whether or not every basically generated ultrafilter is a Fubini iterate of p-points.
Fubini iterates of p-points have canonical maps which are not only finitary but moreover 
satisfy the analogue of
basic when considered on the correct topological space.

The definition of {\em basic} for Fubini iterates of p-points is analagous to the definition for ultrafilters on $\om$.
As it is useful both here and in the final section of this paper, we now define the notion of a front, which can be found in \cite{TodorcevicBK10}.

\begin{defn}\label{def.front}
A family $B$ of finite subsets of some infinite subset $I$ of $\om$ is called a \em front \rm on $I$ if 
\begin{enumerate}
\item
$a\not\sqsubset b$ whenever $a, b$ are in $B$; and
\item
For every infinite $X\sse I$ there exists $b\in B$ such that $b\sqsubset X$.
\end{enumerate}
\end{defn}

The following special kind of front is exactly the fronts that form base sets for Fubini iterates.

\begin{defn}[Dobrinen, Definition 13 in \cite{Dobrinen10}] \label{def.flattopfront}
We  call a set $B\sse[\om]^{<\om}$ a {\em flat-top front} if
$B$ is a front on $\om$, $B\ne \{\emptyset\}$, and
\begin{enumerate}
\item
Either  $B= [\om]^1$; or
\item
$B\sse [\om]^{\ge 2}$ and 
for each $b\in B$, 
letting $a=b\setminus\{\max(b)\}$,
$\{c\setminus a: c\in B,\ c\sqsupset a\}$ is equal to $[\om\setminus (\max(a) +1)]^1$.
\end{enumerate}
\end{defn}

Flat-top fronts are exactly the fronts on which iterated Fubini products of ultrafilters are represented.  
For example, $[\om]^2$ is the flat-top front on which a Fubini product of the form $\lim_{n\ra\mathcal{U}}\mathcal{V}_n$ is represented.
For each $k<\om$,  $[\om]^k$ is a flat-top front.
Moreover,  
flat-top fronts are preserved under the following recursive construction, which is analogous to  the base of a Fubini product of ultrafilters which are themselves Fubini products:
Given
  flat-top fronts $B_{\{n\}}$ on $\om\setminus (n+1)$, $n<\om$,
the union $\bigcup_{n\in \om} B_n$ is a flat-top front on $\om$.

Given a flat-top front $B$, let $\hat{B}$ denote the collection of all initial segments of members of $B$, and $C(B)$ denote $\hat{B}\setminus B$.  Note that $C(B)$ forms a tree, where the tree ordering is by initial segment.
A sequence $\vec{\mathcal{U}}=(\mathcal{U}_c:c\in C(B))$ of nonprincipal ultrafilters $\mathcal{U}_c$ on $\om$,
a {\em $\vec{\mathcal{U}}$-tree} is a tree $T\sse \hat{B}$ with the property that $\{n\in\om:c\cup \{n\}\in T\}\in\mathcal{U}_c$ for all $c\in C$.

\begin{fact}\label{fact.precise.connection}
If $\mathcal{W}$  is  a countable  iteration of  Fubini products of p-points, then there is a flat-top front $B$ and p-points $\mathcal{U}_c$, $c\in C(B)$ such that $\mathcal{W}$ is isomorphic to the ultrafilter on $B$ generated by the $(\mathcal{U}_c:c\in C)$-trees.
\end{fact}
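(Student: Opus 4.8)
The plan is to induct on the complexity of the iterated Fubini construction, working throughout only up to isomorphism so that at each stage one may place the relevant ultrafilter literally on a flat-top front. First I would pin down the class: let $\mathcal{C}$ be the smallest class of ultrafilters on countable base sets containing every p-point on $\om$ and closed under the operation $(\mathcal{U},(\mathcal{V}_n)_{n<\om})\mapsto\lim_{n\ra\mathcal{U}}\mathcal{V}_n$ whenever $\mathcal{U}$ is a p-point on $\om$ and each $\mathcal{V}_n\in\mathcal{C}$. Each $\mathcal{W}\in\mathcal{C}$ comes with a well-founded construction tree; assign to it an ordinal rank $\rho(\mathcal{W})<\om_1$ and induct on $\rho(\mathcal{W})$. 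For the base case ($\rho(\mathcal{W})=0$, i.e.\ $\mathcal{W}$ a p-point) take $B=[\om]^1$, a flat-top front by Definition \ref{def.flattopfront}(1); then $C(B)=\hat{B}\setminus B=\{\emptyset\}$, and setting $\mathcal{U}_\emptyset=\mathcal{W}$ a $\vec{\mathcal{U}}$-tree is just a $T\sse\hat{B}$ with $\{n:\{n\}\in T\}\in\mathcal{W}$, whose terminal-node set is $\{\{n\}:n\in U\}$ for some $U\in\mathcal{W}$, so the filter these generate is the pushforward of $\mathcal{W}$ along $n\mapsto\{n\}$.

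For the inductive step, write $\mathcal{W}=\lim_{n\ra\mathcal{U}}\mathcal{V}_n$ with $\mathcal{U}$ a p-point on $\om$ and each $\mathcal{V}_n\in\mathcal{C}$ of strictly smaller rank. By the induction hypothesis each $\mathcal{V}_n$ is isomorphic to the ultrafilter generated by the $\vec{\mathcal{U}^n}$-trees on a flat-top front, where $\vec{\mathcal{U}^n}=(\mathcal{U}^n_c:c\in C(B^n))$ is a sequence of p-points; transporting along the order isomorphism $k\mapsto k+n+1$ (which carries flat-top fronts to flat-top fronts and, being an ultrafilter isomorphism, preserves the p-point property of every component) I may assume this front $B_{\{n\}}$ lives on $\om\setminus(n+1)$. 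Set $B=\{\{n\}\cup b:n<\om,\ b\in B_{\{n\}}\}$; since $\min(\{n\}\cup b)=n$, this is precisely the front produced from the $B_{\{n\}}$ by the recursive construction described after Definition \ref{def.flattopfront}, so $B$ is a flat-top front. Define $\vec{\mathcal{U}}$ on $C(B)$ by $\mathcal{U}_\emptyset=\mathcal{U}$ and $\mathcal{U}_{\{n\}\cup c}=\mathcal{U}^n_c$ for $c\in C(B_{\{n\}})$; all of these are p-points. Finally identify the base of $\mathcal{W}$ with $B$ via $(n,b)\mapsto\{n\}\cup b$ (using that $\mathcal{V}_n$ lives on $B_{\{n\}}$).

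It then remains to show that under this identification $\mathcal{W}$ equals the ultrafilter $\mathcal{E}$ on $B$ generated by the $\vec{\mathcal{U}}$-trees, where a tree $T$ contributes its set $T\cap B$ of maximal nodes. I would do this in two halves. \emph{Containment $\mathcal{E}\sse\mathcal{W}$:} given a $\vec{\mathcal{U}}$-tree $T$, the set $U=\{n:\{n\}\in T\}$ lies in $\mathcal{U}$, and for $n\in U$ the subtree of $T$ hanging below $\{n\}$ is a $\vec{\mathcal{U}^n}$-tree, so by the induction hypothesis its maximal-node set lies in $\mathcal{V}_n$; assembling these shows $T\cap B$ contains a basic set of $\mathcal{W}$, hence lies in $\mathcal{W}$. \emph{$\mathcal{E}$ is an ultrafilter:} given $A\sse B$, apply the ultrafilter property of $\mathcal{U}$ to the set of $n$ for which (by the induction hypothesis applied to $\mathcal{V}_n$ and the slice $\{b:\{n\}\cup b\in A\}$) there is a $\vec{\mathcal{U}^n}$-tree with all maximal nodes inside, resp.\ all maximal nodes avoiding, that slice; stitching together suitable per-$n$ subtrees over a set in $\mathcal{U}$ produces a single $\vec{\mathcal{U}}$-tree witnessing $A\in\mathcal{E}$ or $B\setminus A\in\mathcal{E}$. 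Since $\mathcal{E}$ is an ultrafilter contained in the ultrafilter $\mathcal{W}$, they coincide, which is the desired isomorphism.

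The step I expect to be the main obstacle is this last recursive ``pressing down along the front'' argument that the $\vec{\mathcal{U}}$-trees generate an ultrafilter (essentially a front-indexed iteration of the fact that Fubini products of ultrafilters are ultrafilters), together with the bookkeeping needed to keep the construction tree well-founded so the transfinite induction on $\rho(\mathcal{W})$ genuinely goes through for iterations of countable length. By contrast, verifying that flat-top-ness is inherited by $B$ and that the order shift $k\mapsto k+n+1$ behaves well are routine given the preservation fact cited in the excerpt.
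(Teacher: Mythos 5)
Your proof is correct, and it is essentially the argument the paper intends: the survey states this Fact without proof, and your induction on the rank of the construction is exactly a formalization of the recursive correspondence sketched in the paragraph preceding the Fact (assembling the flat-top front $B$ from flat-top fronts $B_{\{n\}}$ on $\om\setminus(n+1)$ so that one level of the front mirrors one application of $\lim_{n\ra\mathcal{U}}$, with the component p-points reattached to the nodes of $C(B)$, and with the pressing-down/stitching argument showing the $\vec{\mathcal{U}}$-tree filter is an ultrafilter contained in, hence equal to, $\mathcal{W}$). The only small point to note is that if ``countable iteration of Fubini products of p-points'' is read as also allowing the index ultrafilter to be a previous iterate, then one reduces to your class $\mathcal{C}$ (outer ultrafilter always a p-point) by the routine associativity $\lim_{b\ra\lim_{n\ra\mathcal{U}}\mathcal{V}_n}\mathcal{W}_b\cong\lim_{n\ra\mathcal{U}}\lim_{m\ra\mathcal{V}_n}\mathcal{W}_{(n,m)}$, which is worth one sentence in a written-up version.
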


\begin{notation}\label{notn.nec}
For any subset $A\sse[\om]^{<\om}$ and $k<\om$, 
let $A\re k$ denote $\{a\in A:\max(a)<k\}$.
For $A\sse\hat{B}$ and $k<\om$,
let $\chi_A\re k$ denote the characteristic function of $A\re k$ on domain $\hat{B}\re k$.
For each $k<\om$, let $2^{\hat{B}\re k}$ denote the collection of characteristic functions of subsets of $\hat{B}\re k$ on domain $\hat{B}\re k$.

Given a flat-top front $B$ and a sequence $\vec{\mathcal{U}}=(\mathcal{U}_c:c\in C(B))$ of nonprincipal ultrafilters on $\om$,
let $\mathfrak{T}=\mathfrak{T}(\vec{\mathcal{U}})$ denote the collection of all $\vec{\mathcal{U}}$-trees.
For any tree $T$, let $[T]$ denote the collection of maximal branches through $T$.
\end{notation}

\begin{defn}\label{defn.finrep}
Let $B$ be a flat-top front on $\om$. 
Let  $(n_k)_{k<\om}$ be
an increasing sequence.
We say that a function $\hat{f}:\bigcup_{k<\om}2^{\hat{B}\re n_k}\ra 2^{<\om}$ is 
{\em level preserving} if  $\hat{f}: 2^{\hat{B}\re {n_k}}\ra 2^{k}$, for each $k<\om$.
$\hat{f}$ is {\em initial segment preserving} if for all $k<m$, $A\sse\hat{B}\re n_k$ and $A'\sse \hat{B}\re n_m$,
if $A=A'\re n_k$ then $\hat{f}(\chi_A)=\hat{f}(\chi_{A'})\re k$.
$\hat{f}$ is {\em monotone} if whenever  $A\sse A'\sse\hat{B}$ are finite,
 then  $\hat{f}(s)\sse \hat{f}(t)$.

Let $\mathcal{W}$ be an ultrafilter on $B$ generated by $(\mathcal{U}_c:c\in C(B))$-trees, let $f:\mathcal{W}\ra\mathcal{V}$ be a monotone cofinal map, where $\mathcal{V}$ is an ultrafilter on base $\om$, and let $\tilde{T}\in\mathfrak{T}(\vec{\mathcal{U}})$.
We say that 
$\hat{f}:\bigcup_{k<\om}2^{\hat{B}\re n_k}\ra 2^{<\om}$ {\em generates} $f$ on $\mathfrak{T}\re \tilde{T}$ if
for each   $T\sse\tilde{T}$  in $\mathfrak{T}$,
\begin{equation}
f([T])=\bigcup_{k<\om}d(\hat{f}(\chi_T \re n_k)).
\end{equation}
\end{defn}

\begin{defn}\label{defn.basiconfront}
Let $B$ be a flat-top front and $\vec{\mathcal{U}}=(\mathcal{U}_c:c\in C(B))$ be a sequence of ultrafilters.
We say that the ultrafilter $\mathcal{W}$ on $B$ generated by the  $\vec{\mathcal{U}}$-trees has {\em basic Tukey reductions}
if whenever $f:\mathcal{W}\ra\mathcal{V}$ is  a monotone cofinal map,
then there is a $\tilde{T}\in\mathfrak{T}(\vec{\mathcal{U}})$ and a monotone, initial segment and level preserving map $\hat{f}$ which generates $f$ on $\mathfrak{T}\re \tilde{T}$.
\end{defn}

In particular, a basic Tukey reduction is finitary.
Moreover,
if $\hat{f}$ witnesses that $f$ is basic on $\mathfrak{T}\re\tilde{T}$, 
then $\hat{f}$ generates a continuous map on the collection of {\em trees} in $\mathfrak{T}\re\tilde{T}$,  continuity being with respect to the Cantor topology on $2^{\hat{B}}$.

\begin{thm}[Dobrinen, Theorem 21 in \cite{Dobrinen10}]\label{thm.allFubProd_p-point_cts}
Let $B$ be any flat-top front and $\vec{\mathcal{U}}=(\mathcal{U}_c:c\in C(B))$ be a sequence of p-points.
Then the ultrafilter on base $B$ generated by the $\vec{\mathcal{U}}$-trees
has basic Tukey reductions.
Therefore, every countable iteration of Fubini products of p-points has basic, and hence finitary Tukey reductions.
\end{thm}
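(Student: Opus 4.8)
The plan is to prove the main statement by a single fusion argument along the internal tree $C(B)$ of the flat-top front, bookkept by recursion on the rank of $B$; the base case is the p-point canonization theorem, and the recursion only records which nodes of $C(B)$ are ``active'' below a given level. There is essentially nothing to reduce: by Definition~\ref{defn.basiconfront} we are handed a monotone cofinal map $f\colon\mathcal{W}\ra\mathcal{V}$ with $\mathcal{V}$ an ultrafilter on $\om$, and we must produce $\tilde T\in\mathfrak{T}(\vec{\mathcal{U}})$ together with a monotone, level- and initial-segment-preserving $\hat f$ that generates $f$ on $\mathfrak{T}\re\tilde T$. When $B=[\om]^1$ the $\vec{\mathcal{U}}$-tree ultrafilter on $B$ is isomorphic to the p-point $\mathcal{U}_\emptyset$ and the notion of being basic on $\mathfrak{T}$ collapses to the notion of Definition~\ref{defn.basicom}, so Theorem~\ref{thm.ppt} finishes this case; morally, the general case is the statement that a Fubini product of $\vec{\mathcal{U}}$-tree ultrafilters with basic reductions, taken over a p-point, again has basic reductions.

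For the recursion I would use that a flat-top front $B$ of positive rank decomposes at the minimum element into flat-top fronts $B_{\{n\}}$ of strictly smaller rank, with $C(B)=\{\emptyset\}\cup\bigcup_n C(B_{\{n\}})$ and $\vec{\mathcal{U}}$ splitting as $\mathcal{U}_\emptyset$ together with sequences $\vec{\mathcal{U}}^{(n)}$ of p-points; a $\vec{\mathcal{U}}$-tree $T$ with root set $W_0=\{n:\{n\}\in T\}\in\mathcal{U}_\emptyset$ is then the union of $\vec{\mathcal{U}}^{(n)}$-trees $T_n$ ($n\in W_0$) with $[T]=\bigcup_{n\in W_0}[T_n]$, so $\mathcal{W}=\lim_{n\ra\mathcal{U}_\emptyset}\mathcal{W}_n$ for the $\vec{\mathcal{U}}^{(n)}$-tree ultrafilters $\mathcal{W}_n$. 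Crucially $C(B)$ is well-founded, because $B$ being a front means every infinite branch of the tree of initial segments meets $B$. The fusion proceeds as follows: build recursively source cut-offs $n_0<n_1<\cdots$ and a $\supseteq$-decreasing sequence of $\vec{\mathcal{U}}$-trees $\tilde T^0\supseteq\tilde T^1\supseteq\cdots$ so that for every $\vec{\mathcal{U}}$-tree $T\sse\tilde T^k$ the finite set $f([T])\cap k$ depends only on $\chi_T\re n_k$. To pass from $k$ to $k+1$ one chooses $n_{k+1}$ large, notes that only finitely many nodes $c\in C(B)$ have $\max(c)<n_{k+1}$ and only finitely many configurations below level $n_{k+1}$ are possible, and at each such node uses that $\mathcal{U}_c$ is a p-point to shrink the successor set of $c$ to a single member of $\mathcal{U}_c$ that handles simultaneously all the countably many ways of extending a given configuration below $c$; intersecting these choices gives $\tilde T^{k+1}$, still a $\vec{\mathcal{U}}$-tree since p-point intersections keep each successor set in its $\mathcal{U}_c$. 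Taking $\tilde T$ to be the diagonal intersection of the $\tilde T^k$ — again a $\vec{\mathcal{U}}$-tree, using well-foundedness of $C(B)$ to see that only finitely many nodes are relevant at each level — and defining $\hat f(\chi_T\re n_k)$ to be the characteristic function of the stabilized set $f([T])\cap k$ yields the desired $\hat f$; level preservation is built into the pairing of $n_k$ with $k$, initial-segment preservation and monotonicity descend from the corresponding properties of $f$, and ``generates'' is exactly the stabilization.

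The main obstacle is justifying the stabilization, i.e.\ that the thinning really can be arranged so that membership of a fixed $m$ in $f([T])$ is decided by a bounded initial segment $\chi_T\re n_k$ with no dependence on arbitrarily deep levels of $T$. This is where cofinality of $f$ (not just monotonicity) enters: cofinality forces the sets $f([T])$ to be cofinal in $\mathcal{V}$, which is what lets one argue that the finitely many witnesses governing the bit at a level $m<k$ can be made to appear at a bounded depth, and the p-point property at each node $c$ is precisely what collapses the countably many demands coming from the possible extensions below $c$ into one set of $\mathcal{U}_c$ — this is the local content of Theorem~\ref{thm.5}, now carried out simultaneously at every node of the well-founded tree $C(B)$. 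A secondary technical point is to verify throughout that diagonal and fiberwise intersections of $\vec{\mathcal{U}}$-trees remain $\vec{\mathcal{U}}$-trees, which is routine given that every $\mathcal{U}_c$ is a filter. Once the main statement is in hand, the ``Therefore'' is immediate: by Fact~\ref{fact.precise.connection} every countable iteration of Fubini products of p-points is isomorphic to a $\vec{\mathcal{U}}$-tree ultrafilter over a flat-top front all of whose $\mathcal{U}_c$ are p-points, hence has basic Tukey reductions, and a basic Tukey reduction is finitary by the remark following Definition~\ref{defn.basiconfront}.
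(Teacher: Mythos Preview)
The paper you are working from is a survey; it states this theorem with attribution to Theorem~21 of \cite{Dobrinen10} and gives no proof of its own, so there is nothing in the present paper to compare your sketch against. That said, your plan --- a fusion along the well-founded tree $C(B)$, invoking the p-point property of each $\mathcal{U}_c$ to collapse countably many stabilization demands at that node, and assembling the resulting $\tilde T^k$ into a single $\vec{\mathcal{U}}$-tree $\tilde T$ --- is the natural strategy and is essentially the one carried out in the cited source. Your identification of the stabilization step as the crux, and of cofinality plus nodewise p-point as the ingredients that drive it, is correct.

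One point deserves more care than your sketch gives it. You justify that the diagonal intersection $\tilde T$ of the $\tilde T^k$ is again a $\vec{\mathcal{U}}$-tree by appealing to well-foundedness of $C(B)$ (``only finitely many nodes are relevant at each level''). But in your construction a fixed node $c$ satisfies $\max(c)<n_{k+1}$ for all sufficiently large $k$, so its successor set is thinned at \emph{every} subsequent stage; the resulting successor set in $\tilde T$ is a countable intersection, and well-foundedness alone does not put it back in $\mathcal{U}_c$. What saves you is again that $\mathcal{U}_c$ is a p-point: either take a pseudo-intersection at each node at the end, or organize the fusion so that once a node is frozen its successor set is never shrunk again (the Laver/Mathias style). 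Either fix is routine, and the p-point hypothesis is already in hand, but the reason you give is not the right one.
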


Since the previous theorem is  the direct analogue of Theorem \ref{thm.ppt}, Theorem \ref{thm.D9} points to the following natural open question.

\begin{question}\label{q.bgimplyinherited}
If $\mathcal{W}$ is Tukey reducible to a Fubini iterate of p-points, does $\mathcal{W}$ have finitary Tukey reductions?
\end{question}

The above results leave open the following question.
Existing methods should suffice to give a positive answer, but the details have not yet been carried out.

\begin{question}\label{q.ccmq}
Given an ultrafilter Tukey reducible to a countable iteration of Fubini products of ultrafilters
from among the collection listed above, 
is every monotone cofinal map on $\mathcal{U}$ generated by a finitary map on some cofinal subset?
\end{question}

In his study of which ultrafilters are Tukey above a selective ultrafilter,
Raghavan showed, among other things, the following interesting theorem.

\begin{thm}[Raghavan, Corollary 56 in \cite{Raghavan/Todorcevic12}]\label{thm.R56}
Let $\mathcal{U}$ be a Fubini iterate of p-points and $\mathcal{V}$ be a selective ultrafilter.
If $\mathcal{V}\le_T\mathcal{U}$,
then in fact $\mathcal{V}\le_{RK}\mathcal{U}$.
\end{thm}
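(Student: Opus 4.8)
The plan is to induct on the rank of $\mathcal{U}$ in its presentation as a countable iteration of Fubini products of p-points, at each stage converting the canonical cofinal map supplied by Theorem~\ref{thm.allFubProd_p-point_cts} into a genuine Rudin--Keisler map. By Fact~\ref{fact.precise.connection} we may assume $\mathcal{U}$ is the ultrafilter on a flat-top front $B$ generated by the $\vec{\mathcal{U}}$-trees for some sequence $\vec{\mathcal{U}}=(\mathcal{U}_c:c\in C(B))$ of \emph{p-points}, and we induct on the (well-founded) rank of $C(B)$. Fix, via Fact~\ref{fact.monotone}, a monotone cofinal map $f\colon\mathcal{U}\ra\mathcal{V}$, and by Theorem~\ref{thm.allFubProd_p-point_cts} a tree $\tilde T\in\mathfrak{T}(\vec{\mathcal{U}})$ on which $f$ is basic, generated by a monotone, level- and initial-segment-preserving $\hat f$; we are free to shrink $\tilde T$ further as needed.

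In the base case $\mathcal{U}$ is a p-point (equivalently $B=[\om]^1$). Since $\mathcal{V}$ is selective it is in particular a q-point, so Corollary~\ref{cor.DRT} applied with $\mathcal{W}=\mathcal{U}$ gives $\mathcal{V}\le_{RB}\mathcal{U}$, hence $\mathcal{V}\le_{RK}\mathcal{U}$. For the inductive step, since a Fubini product of nonprincipal ultrafilters is never a p-point we may write $\mathcal{U}\cong\lim_{n\ra\mathcal{U}_\emptyset}\mathcal{W}_n$, where $\mathcal{U}_\emptyset$ is the root p-point and $\mathcal{W}_n$ is the iterated Fubini product of p-points carried by the flat-top front above $\{n\}$; so each $\mathcal{W}_n$ is again a countable iteration of Fubini products of p-points, of strictly smaller rank, and $\mathcal{U}_\emptyset\le_{RK}\mathcal{U}$ via the first-coordinate projection. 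Present the base of $\mathcal{U}$ as $\{(n,b):n<\om,\ b\in B_n\}$ with $B_n$ the base of $\mathcal{W}_n$. The key point is the dichotomy: \emph{either $\mathcal{V}\le_T\mathcal{U}_\emptyset$, or $\{n<\om:\mathcal{V}\le_T\mathcal{W}_n\}\in\mathcal{U}_\emptyset$}. Granting this, in the first case the base case applied to $\mathcal{U}_\emptyset$ gives $\mathcal{V}\le_{RK}\mathcal{U}_\emptyset\le_{RK}\mathcal{U}$; in the second case the induction hypothesis produces, for every $n$ in some $S\in\mathcal{U}_\emptyset$, a map $h_n$ on the base of $\mathcal{W}_n$ with $h_n(\mathcal{W}_n)=\mathcal{V}$, and setting $h(n,b)=h_n(b)$ for $n\in S$ one checks directly that $h^{-1}(A)\in\mathcal{U}$ iff $\{n\in S:A\in h_n(\mathcal{W}_n)\}\in\mathcal{U}_\emptyset$ iff $A\in\mathcal{V}$, so $h(\mathcal{U})=\mathcal{V}$, i.e.\ $\mathcal{V}\le_{RK}\mathcal{U}$.

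It remains to establish the dichotomy, which is the main obstacle. Consider the monotone map sending $D\in\mathcal{U}_\emptyset$ to $f(C_D)$, where $C_D=\{(n,b):n\in D,\ b\in B_n\}\in\mathcal{U}$; if its range is cofinal in $\mathcal{V}$ then by Fact~\ref{fact.monotonemap} it witnesses $\mathcal{V}\le_T\mathcal{U}_\emptyset$, the first alternative. Otherwise $f$ genuinely ``reads'' the fibres $\mathcal{W}_n$, and one must show that then the restriction of $f$ to a single fibre is a cofinal map into $\mathcal{V}$ for $\mathcal{U}_\emptyset$-many $n$. This is where both the level-preservation built into the basic presentation $\hat f$ and the rigidity of $\mathcal{V}$ enter: since $\mathcal{V}$ is simultaneously a p-point and a q-point it has no nonprincipal Rudin--Keisler predecessors, and a Galvin--Prikry / \Pudlak--\Rodl-style canonization of $\hat f$ over a subtree of $\tilde T$ is needed to rule out the range of such a fibre restriction being unbounded only ``across'' fibres rather than being cofinal within $\mathcal{V}$.

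An alternative organization, bypassing the induction altogether, is to canonize $\hat f$ directly on the space of $\vec{\mathcal{U}}$-trees: by the Ramsey property of that space there is a subtree $\tilde T$ over which the finitary map $\hat f$ assumes a canonical form, and one then reads off an explicit Rudin--Keisler map from that form, using selectivity of $\mathcal{V}$ to eliminate every canonical form except the one yielding $\mathcal{V}\le_{RK}\mathcal{U}$. Either way, the crux is the interaction between the canonization of cofinal maps on the tree space and the fact that a selective ultrafilter is minimal in the Rudin--Keisler order; everything else is bookkeeping with Fubini products.
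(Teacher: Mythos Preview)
This survey does not contain a proof of the statement; it merely cites Corollary~56 of \cite{Raghavan/Todorcevic12}. There is therefore no proof in the present paper to compare your attempt against, so I comment on the proposal on its own terms.

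Your proposal is an outline rather than a proof, and you acknowledge as much. The base case (via Corollary~\ref{cor.DRT}) and the recursive decomposition $\mathcal U\cong\lim_{n\to\mathcal U_\emptyset}\mathcal W_n$ are correct, and the reduction of the inductive step to your dichotomy is valid. The gap is the dichotomy itself. From the non-cofinality of $D\mapsto f(C_D)$ you cannot conclude that a single-fibre restriction is cofinal for $\mathcal U_\emptyset$-many $n$: nothing in the basic presentation $\hat f$ prevents $f$ from depending essentially on infinitely many fibres simultaneously, and a monotone cofinal map on a Fubini product has no a~priori reason to factor through one coordinate. Your appeal to ``Galvin--Prikry / \Pudlak--\Rodl-style canonization over a subtree of $\tilde T$'' is a hope, not an argument: the collection of $\vec{\mathcal U}$-trees for an arbitrary sequence of p-points is not known to form a topological Ramsey space, so no off-the-shelf canonization theorem applies. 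The alternative route you sketch in the last paragraph has exactly the same gap for the same reason.

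What is actually required --- and what the argument in \cite{Raghavan/Todorcevic12} supplies --- is a direct analysis of the finitary map on the base $B$ that uses the selectivity of $\mathcal V$ \emph{concretely} at each level of the tree $C(B)$ (the one-to-one/constant dichotomy and the partition property for pairs), rather than invoking Rudin--Keisler minimality only at the end. Your outline correctly identifies the ingredients (the basic maps of Theorem~\ref{thm.allFubProd_p-point_cts}, the recursive Fubini structure, the special role of selectivity) but leaves unfilled precisely the step that constitutes the content of the theorem.
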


This is part of the larger investigation in Section 7 of \cite{Raghavan/Todorcevic12} of the following question.

\begin{question}
What is Tukey above a selective ultrafilter? 
\end{question}

In particular, Raghavan asks the following, which he answered for certain circumstances.

\begin{question}
If $\mathcal{U}$ is basically generated and $\mathcal{V}$ is selective, then does $\mathcal{V}\le_T\mathcal{U}$ imply $\mathcal{V}\le_{RK}\mathcal{U}$?
\end{question}

This section concludes 
by mentioning  that there are similar 
 canonization theorems for cofinal maps on generic ultrafilters forced by $\mathcal{P}(\om\times\om)/\Fin^{\otimes 2}$.
(See \cite{Blass/Dobrinen/Raghavan13}.)


\section{Structures embedded in the Tukey types}\label{sec.embeddings}

We now review structures known to embed into the Tukey types of ultrafilters.
All of the results in this section have heavily relied on the existence of 
 canonical cofinal maps presented in Section \ref{sec.canonical.maps}.
In this section we concentrate on structures which are just embedded into the Tukey types, saving the known results for exact structures for the next section.

That p-points have continuous Tukey reductions (recall Theorem \ref{thm.5}) is heavily used in the study of which structures embed into the Tukey types of p-points.
It is shown in Corollary 21 in \cite{Dobrinen/Todorcevic11} that every $\le_T$-chain of p-points on $\om$ has cardinality $\le\mathfrak{c}^+$.
In fact, this is true for any type of ultrafilter which has canonical cofinal maps, since in that case, there are only $\mathfrak{c}$ many ultrafilters Tukey reducible to any ultrafilter in the chain.
Thus, by the theorems of Section \ref{sec.canonical.maps}, we have the following.

\begin{fact}
Let $(C,\le_T)$ be any Tukey-increasing chain of ultrafilters from among ultrafilters which are  basically generated, stable ordered union ultrafilters, or  Tukey reducible to a p-point, or any sort of ultrafilter which has only $\mathfrak{c}$ many ultrafilters Tukey reducible to it.
Then $C$  has cardinality at most $\mathfrak{c}^+$.
\end{fact}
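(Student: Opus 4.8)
The statement asserts that any $\le_T$-increasing chain $C$ of ultrafilters, all drawn from a class in which each member has at most $\mathfrak{c}$ many ultrafilters Tukey reducible to it, has $|C|\le\mathfrak{c}^+$. The plan is to convert the length bound into a counting argument: if the chain were longer than $\mathfrak{c}^+$, some single member would have too many predecessors, contradicting the $\mathfrak{c}$-bound on its Tukey-below set. First I would observe that by the theorems of Section \ref{sec.canonical.maps} (Corollary \ref{cor.DT}, Theorem \ref{lem.16R}, Theorem \ref{thm.FINcanonical}, Theorem \ref{thm.allFubProd_p-point_cts}), each of the listed types of ultrafilter has only $\mathfrak{c}$ many monotone cofinal maps into ultrafilters on a countable base (there are only $\mathfrak{c}$ many finitely generated, resp.\ continuous, resp.\ basic maps, and each codomain is one of the $\le 2^{\mathfrak{c}}$ ultrafilters but the image of a cofinal subset determines the Tukey-equivalence class, so the count of Tukey \emph{types} below is $\le\mathfrak{c}$). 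Hence for each $\mathcal{U}$ in the relevant class, the set $\{[\mathcal{V}]_T:\mathcal{V}\le_T\mathcal{U}\}$ of Tukey types below $\mathcal{U}$ has cardinality at most $\mathfrak{c}$.

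Next I would argue by contradiction. Suppose $(C,\le_T)$ is a chain with $|C|>\mathfrak{c}^+$; without loss of generality we may pass to the quotient by $\equiv_T$ and regard $C$ as a strictly $<_T$-increasing chain of Tukey types of order type some ordinal $\delta$ with $|\delta|\ge\mathfrak{c}^{++}$. For each $\alpha<\delta$ write $t_\alpha$ for the $\alpha$-th type and pick a representative ultrafilter $\mathcal{U}_\alpha$ of $t_\alpha$; since the class in question is closed under $\equiv_T$ for the purpose of this count (or, more carefully, since \emph{some} representative lies in the class and that suffices to bound the number of types below it), each $\mathcal{U}_\alpha$ has at most $\mathfrak{c}$ many Tukey types below it. But the initial segment $\{t_\beta:\beta<\alpha\}$ consists of $|\alpha|$ many distinct types, all $<_T t_\alpha$, hence all among the $\le\mathfrak{c}$ types Tukey-below $\mathcal{U}_\alpha$. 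Taking $\alpha$ with $|\alpha|=\mathfrak{c}^+$ (which exists since $|\delta|\ge\mathfrak{c}^{++}$) yields $\mathfrak{c}^+\le\mathfrak{c}$, a contradiction. Therefore $|\delta|\le\mathfrak{c}^+$, i.e.\ $|C|\le\mathfrak{c}^+$.

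The only subtlety I would be careful about — and what I regard as the main obstacle — is the precise justification that ``at most $\mathfrak{c}$ many ultrafilters (equivalently Tukey types) are Tukey reducible to a given $\mathcal{U}$ in the class.'' For this one uses that, for $\mathcal{U}$ a p-point (or basically generated, or stable ordered union, etc.), every monotone cofinal witness $f:\mathcal{U}\ra\mathcal{V}$ is, after restriction to a cofinal subset, generated by a finitary (indeed basic) map $\hat f$ on $[B]^{<\om}$, and there are only $\mathfrak{c}=\mathfrak{c}^{\aleph_0}$ such $\hat f$ (each is a function between countable sets of finite objects). A given $\hat f$ determines $f''\mathcal{D}$ and hence the cofinal type $\mathcal{V}\equiv_T f''\mathcal{D}$ uniquely; so the number of Tukey types $\le_T\mathcal{U}$ is at most $\mathfrak{c}$. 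Everything else is the routine ordinal/cardinal bookkeeping sketched above, and I would present it in essentially that two-line contradiction form.
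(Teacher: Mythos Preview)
Your approach matches the paper's: the paper does not give a formal proof but simply remarks, in the sentence preceding the Fact, that the bound follows because each such ultrafilter has only $\mathfrak{c}$ many ultrafilters Tukey reducible to it (by the canonical-map results of Section \ref{sec.canonical.maps}). Your expansion of this into an explicit counting argument is exactly what is intended.

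There is one slip worth flagging. You write that after passing to the quotient by $\equiv_T$ you may ``regard $C$ as a strictly $<_T$-increasing chain of Tukey types of order type some ordinal $\delta$.'' But the quotient of a $\le_T$-chain by $\equiv_T$ is merely a linear order, not in general a well-order, so it need not have an ordinal order type. The repair is immediate: if $L$ is this quotient and every principal initial segment $(\leftarrow,t]$ has size $\le\mathfrak{c}$, then letting $\kappa=\cf(L)$ we have $|L|\le\kappa\cdot\mathfrak{c}$; hence if $|L|>\mathfrak{c}^+$ then $\kappa\ge\mathfrak{c}^{++}$, so $L$ contains a strictly increasing well-ordered cofinal sequence of length $\ge\mathfrak{c}^++1$, and now your argument applies verbatim to that sequence (the element at position $\mathfrak{c}^+$ has $\mathfrak{c}^+$ distinct predecessors in $L$, contradicting the $\mathfrak{c}$-bound). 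With this adjustment the proof is complete.
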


The following was  proved by Dobrinen and Todorcevic, and  independently by Raghavan.

\begin{thm}[Dobrinen/\Todorcevic, 
  Corollary 53 in \cite{Dobrinen/Todorcevic11}]\label{cor.53}
 Assuming CH,  there is a Tukey strictly increasing chain of p-points of order type $\mathfrak{c}$.
\end{thm}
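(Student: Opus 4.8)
The plan is to construct, under CH, a sequence $\lgl \mathcal{U}_\al : \al < \mathfrak{c} \rgl$ of p-points with $\mathcal{U}_\al <_T \mathcal{U}_\be$ whenever $\al < \be$, by transfinite recursion of length $\mathfrak{c} = \om_1$. At successor steps the idea is to pass from $\mathcal{U}_\al$ to a strictly Tukey-larger p-point; at limit steps we need a p-point that sits Tukey-above all the $\mathcal{U}_\al$ constructed so far. Since all ultrafilters in play are p-points, Theorem \ref{thm.ppt} (or Theorem \ref{thm.5}) gives us basic, hence continuous, monotone cofinal maps to work with, and this is what will make the bookkeeping tractable.

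For the successor step, given a p-point $\mathcal{U}_\al$, I would take $\mathcal{U}_{\al+1}$ to be a p-point that Rudin-Keisler-projects onto $\mathcal{U}_\al$ but is strictly RK-above it; for instance a suitable ``sum-like'' or product-like construction of p-points arranged so that the result is again a p-point (under CH such constructions are standard, e.g.\ building $\mathcal{U}_{\al+1}$ on $\om \times \om$ so that the first-coordinate projection is $\mathcal{U}_\al$ and each fiber is selective, then transporting back to $\om$; one checks this is a p-point when the fibers and base are chosen generically). By Fact \ref{fact.TRK}, $\mathcal{U}_{\al+1} \ge_T \mathcal{U}_\al$; strictness of $>_T$ is the point that needs care — one wants to ensure the reverse Tukey reduction fails, which can be arranged by a diagonalization during the construction of $\mathcal{U}_{\al+1}$ against all $\mathfrak{c}$-many candidate monotone cofinal maps $\mathcal{U}_{\al} \to \mathcal{U}_{\al+1}$ (using CH to enumerate them and continuity from Theorem \ref{thm.ppt} to handle each in a countable amount of work).

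The limit step is where the real content lies: given $\lgl \mathcal{U}_\al : \al < \delta \rgl$ for a countable limit $\delta$, fix a cofinal $\om$-sequence $\al_n \nearrow \delta$ and form $\mathcal{W} = \lim_{n \to \mathcal{V}} \mathcal{U}_{\al_n}$ for some selective $\mathcal{V}$ — but this is a Fubini product, not a p-point. So instead I would build, by a CH-recursion of length $\om_1$ internal to the limit step, a p-point $\mathcal{U}_\delta$ that is RK-above every $\mathcal{U}_{\al_n}$: enumerate all $\om_1$-many tasks (families of $\om$ sets to be pseudo-intersected, for the p-point property; and, for each $n$, the requirement that a fixed projection map sends $\mathcal{U}_\delta$ onto $\mathcal{U}_{\al_n}$) and meet them one at a time, which is possible because $\mathfrak{p} = \mathfrak{c}$ under CH. Then $\mathcal{U}_\delta \ge_T \mathcal{U}_{\al_n} \ge_T \mathcal{U}_\al$ for all $\al < \delta$ by Fact \ref{fact.TRK} and transitivity, giving $\mathcal{U}_\delta \ge_T \mathcal{U}_\al$; strictness at $\delta$ follows because $\mathcal{U}_\delta >_T \mathcal{U}_{\al_n} \ge_T \mathcal{U}_{\al}$ for any $\al < \al_n < \delta$, and we may also ensure $\mathcal{U}_\delta \ne_T \mathcal{U}_{\al}$ for the single relevant $\al$ just below if needed by the same diagonalization.

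The main obstacle is the limit step: reconciling ``p-point'' with ``Tukey (indeed RK) above cofinally many earlier ultrafilters'' cannot be done by Fubini products, which are never p-points, so one must hand-craft the limit ultrafilter by a dovetailed CH construction that simultaneously (i) secures countable pseudo-intersections, (ii) forces the right RK projections onto each $\mathcal{U}_{\al_n}$, and (iii) diagonalizes against reductions witnessing a collapse $\mathcal{U}_\delta \equiv_T \mathcal{U}_\al$. Keeping all three requirement-streams compatible over $\om_1$ steps — in particular checking that the RK-projection requirements do not obstruct the p-point requirements — is the delicate part; everything else (the successor step, and assembling the global chain from the pieces) is routine given CH and the continuity of cofinal maps on p-points from Theorem \ref{thm.ppt}.
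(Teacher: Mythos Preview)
The paper under review is a survey and does not include a proof of this statement; it simply records the result, attributes it to Dobrinen--\Todorcevic\ (and independently Raghavan), and cites Corollary 53 of \cite{Dobrinen/Todorcevic11}. There is therefore no proof in this paper to compare your proposal against.

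Your outline has the right architecture --- a length-$\om_1$ recursion under CH, with Theorem \ref{thm.ppt} supplying the key fact that there are only $\mathfrak{c}$ many candidate (continuous monotone) cofinal maps to diagonalize against at each stage, so that strict Tukey increase can be secured. One wording issue to watch: in the successor step you describe ``building $\mathcal{U}_{\al+1}$ on $\om\times\om$ so that the first-coordinate projection is $\mathcal{U}_\al$ and each fiber is selective.'' Read literally as a Fubini limit $\lim_{n\to\mathcal{U}_\al}\mathcal{V}_n$, this is never a p-point --- a fact you yourself invoke in the limit-step discussion --- so the ``fibers selective'' phrasing is misleading. What one actually builds (by a CH recursion) is a p-point $\mathcal{U}_{\al+1}$ that RK-projects onto $\mathcal{U}_\al$; no fiber ultrafilters are involved. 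With that correction, the remaining ingredients --- a p-point RK-above a given countable tower of p-points at limits, and diagonalization against the $\mathfrak{c}$ many continuous cofinal maps to prevent Tukey collapse --- are indeed the substance of the argument, and your identification of the limit step as the delicate point is accurate.
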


Embeddings of antichains into the Tukey types of p-points has also been studied.
The first fact we mention here follows immediately from continuous Tukey reductions and the Hajnal Free Set Theorem.
 
\begin{fact}[Dobrinen/\Todorcevic, Corollary 23 in \cite{Dobrinen/Todorcevic11}]\label{cor.5.5}
Every family $\mathcal{X}$ of p-points on $\om$ of cardinality $>\mathfrak{c}^+$ contains a subfamily $\mathcal{Z}\sse\mathcal{X}$ of equal size such that $\mathcal{U}\not\le_T\mathcal{V}$ whenever $\mathcal{U}\ne\mathcal{V}$ are in $\mathcal{Z}$.
\end{fact}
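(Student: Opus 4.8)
The plan is to recast the statement as an instance of the Hajnal Free Set Theorem, whose essential input is a $\mathfrak{c}$-bound on how many members of $\mathcal{X}$ can sit Tukey-below a fixed p-point. So the first step is to prove: for every p-point $\mathcal{V}$ on $\om$, there are at most $\mathfrak{c}$ ultrafilters $\mathcal{U}$ with $\mathcal{U}\le_T\mathcal{V}$. Given such a $\mathcal{U}$, Fact \ref{fact.monotone} supplies a monotone cofinal map $f:\mathcal{V}\ra\mathcal{U}$, and then Theorem \ref{thm.5} (equivalently the Extension Theorem \ref{thm.PropACtsMaps}) produces a continuous monotone $f^*:\mathcal{P}(\om)\ra\mathcal{P}(\om)$ whose restriction to $\mathcal{V}$ is still a cofinal map into $\mathcal{U}$. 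Since each $f^*(V)$ lies in $\mathcal{U}$ and $\{f^*(V):V\in\mathcal{V}\}$ is cofinal in $\mathcal{U}$, the ultrafilter $\mathcal{U}$ is recovered from $f^*$ as the upward closure of $\{f^*(V):V\in\mathcal{V}\}$; thus, with $\mathcal{V}$ fixed, $\mathcal{U}$ is a function of $f^*$. As $2^{\om}$ is compact metrizable and separable, there are only $\mathfrak{c}$ continuous functions $\mathcal{P}(\om)\ra\mathcal{P}(\om)$ (each is determined by its values on a fixed countable dense set; alternatively each basic map is coded by a finitary map on a countable set, of which there are $\mathfrak{c}$). Hence at most $\mathfrak{c}$ ultrafilters, in particular at most $\mathfrak{c}$ p-points, are Tukey-below $\mathcal{V}$.

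Second, I would define the set mapping $F:\mathcal{X}\ra[\mathcal{X}]^{\le\mathfrak{c}}$ by $F(\mathcal{V})=\{\mathcal{U}\in\mathcal{X}:\mathcal{U}\ne\mathcal{V}\ \text{and}\ \mathcal{U}\le_T\mathcal{V}\}$. By the first step $|F(\mathcal{V})|\le\mathfrak{c}$, i.e.\ $F$ is a set mapping of order $<\mathfrak{c}^+$, and $\mathcal{V}\notin F(\mathcal{V})$. Since $|\mathcal{X}|>\mathfrak{c}^+$, the Hajnal Free Set Theorem yields a free set $\mathcal{Z}\sse\mathcal{X}$ with $|\mathcal{Z}|=|\mathcal{X}|$: for all distinct $\mathcal{U},\mathcal{V}\in\mathcal{Z}$ we have $\mathcal{U}\notin F(\mathcal{V})$. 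Unravelling the definition of $F$, this says precisely that $\mathcal{U}\not\le_T\mathcal{V}$ whenever $\mathcal{U}\ne\mathcal{V}$ are in $\mathcal{Z}$ (the quantifier ranges over all ordered pairs, so it also delivers $\mathcal{V}\not\le_T\mathcal{U}$, i.e.\ pairwise Tukey-incomparability), which is the desired conclusion.

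The main obstacle is the first step — the $\mathfrak{c}$-bound on the Tukey-downward cone of a p-point — and within it the point that the canonical continuous (in fact basic) representative of a cofinal map, furnished by the p-point canonization theorem, genuinely determines the target ultrafilter; once that is in hand, the count reduces to the elementary fact that Cantor space has only continuum many continuous self-maps. A secondary point requiring care is invoking the right form of Hajnal's theorem: the version used covers set mappings of order $<\mathfrak{c}^+$ on a domain of cardinality $>\mathfrak{c}^+$ and returns a free subset of full cardinality, and this is exactly where the hypothesis $|\mathcal{X}|>\mathfrak{c}^+$ (rather than $\ge\mathfrak{c}^+$) enters.
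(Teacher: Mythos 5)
Your proposal is correct and follows essentially the same route as the paper: the survey notes that this fact ``follows immediately from continuous Tukey reductions and the Hajnal Free Set Theorem,'' which is precisely your two-step argument — the canonization theorem for p-points bounds the Tukey-downward cone of each $\mathcal{V}\in\mathcal{X}$ by $\mathfrak{c}$ (since only $\mathfrak{c}$ continuous monotone self-maps of $\mathcal{P}(\om)$ exist and each determines the target ultrafilter), and then Hajnal's set-mapping theorem applied to $F(\mathcal{V})=\{\mathcal{U}\in\mathcal{X}:\mathcal{U}\ne\mathcal{V},\ \mathcal{U}\le_T\mathcal{V}\}$ yields a free subfamily of full cardinality. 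Your accounting of where the hypothesis $|\mathcal{X}|>\mathfrak{c}^+$ enters is also the intended one, so no gaps remain.
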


In the next theorem, almost minimal assumptions are used to show that antichains of maximal size embed into the Tukey types of p-points and furthermore, into the Tukey types of selective ultrafilters.

\begin{thm}[Dobrinen/\Todorcevic, Theorem 44 in \cite{Dobrinen/Todorcevic11}]
\label{thm.selective}\ \\
\begin{enumerate}
\item
Assume cov$(\mathscr{M})=\mathfrak{c}$. 
Then there are $2^{\mathfrak{c}}$ pairwise Tukey incomparable selective ultrafilters.
\item
Assume  $\mathfrak{d}=\mathfrak{u}=\mathfrak{c}$.
Then there are $2^{\mathfrak{c}}$ pairwise Tukey incomparable p-points.
\end{enumerate}
\end{thm}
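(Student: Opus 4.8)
The plan is to build on the canonical-map results of Section~\ref{sec.canonical.maps}. A selective ultrafilter is in particular a p-point, so by Theorem~\ref{thm.ppt} every monotone cofinal map out of it agrees, on a cofinal subset, with one of the mere $\mathfrak{c}$-many basic (hence continuous) monotone maps $2^{\om}\ra 2^{\om}$; thus, to get $2^{\mathfrak{c}}$ pairwise Tukey-incomparable p-points (resp.\ selective ultrafilters) it is enough to build, by a recursion of length $\mathfrak{c}$, a family each member of which defeats all $\mathfrak{c}$ candidate canonical witnesses between it and any other member. For part (1) there is moreover a shortcut bypassing the Tukey bookkeeping: if $\mathcal{V}_0,\mathcal{V}_1$ are selective and $\mathcal{V}_0\le_T\mathcal{V}_1$, then, $\mathcal{V}_1$ being a p-point and $\mathcal{V}_0$ a q-point, Corollary~\ref{cor.DRT} gives $\mathcal{V}_0\le_{RB}\mathcal{V}_1$; writing $\mathcal{V}_0=g(\mathcal{V}_1)$ with $g$ finite-to-one, selectivity of $\mathcal{V}_1$ forces $g$ to be one-to-one on a member of $\mathcal{V}_1$, so $\mathcal{V}_0\equiv_{RK}\mathcal{V}_1$ and hence $\mathcal{V}_0$ and $\mathcal{V}_1$ are isomorphic. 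So \emph{distinct} selective ultrafilters are automatically Tukey-incomparable, and (1) reduces to the statement that $\mathrm{cov}(\mathscr{M})=\mathfrak{c}$ produces $2^{\mathfrak{c}}$ pairwise non-isomorphic selective ultrafilters.

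For this reduced statement I would run the classical meager-avoidance (Booth--Kunen) recursion. Enumerate in order type $\mathfrak{c}$ all tasks --- every $2$-colouring of $[\om]^2$, to force the Ramsey (equivalently, selective) property, and every $h\colon\om\ra\om$ that might realize an isomorphism between two prospective members --- and build the $2^{\mathfrak{c}}$ ultrafilters \emph{simultaneously}, as the branches of a branching scheme coded by a fixed independent family $\{D_\al:\al<\mathfrak{c}\}$, so that one step of the recursion acts on all branches at once. At stage $\xi$ the collection of bad choices for the next homogeneous set --- those violating the current colouring demand, or failing to pull the branches apart at $h_\xi$ --- is meager, and $\mathrm{cov}(\mathscr{M})=\mathfrak{c}$ supplies a good common refinement preserving the finite-intersection property of the scheme. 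At the end every branch generates a selective ultrafilter, the separation steps make them pairwise non-isomorphic, and Tukey-incomparability is free by the first paragraph.

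For part (2) there is no collapse of Tukey reducibility to isomorphism, so I would diagonalize against the canonical maps directly. Fix an enumeration $\langle g_\xi:\xi<\mathfrak{c}\rangle$ of all basic monotone maps $2^{\om}\ra 2^{\om}$ and build $2^{\mathfrak{c}}$ p-points as the branches of an independent-family scheme by a length-$\mathfrak{c}$ recursion with three jobs per stage. First, maintain the p-point property --- any countably many of the $(<\mathfrak{c})$-many sets committed so far must admit a pseudo-intersection still compatible with the scheme, which is Ketonen's argument and is where $\mathfrak{d}=\mathfrak{c}$ enters. Second, keep each partial filter (it has fewer than $\mathfrak{c}$ generators) non-maximal, so the branching can continue, which is where $\mathfrak{u}=\mathfrak{c}$ enters. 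Third, defeat $g_\xi$: since $g_\xi$ is level preserving, its values on a set depend only, cell by cell along some increasing $(k_m)$, on finite pieces of that set, and a basic monotone map is cofinal into a nonprincipal ultrafilter only if it sends cofinally many sets to sets with infinite complement; using this, one chooses the next core-and-branch sets so as to interact suitably with the cells of $g_\xi$ and thereby witness that $g_\xi$ is not a cofinal map from any branch into another.

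The hard part will be the simultaneous bookkeeping. There are $2^{\mathfrak{c}}$ pairs of ultrafilters to separate but only $\mathfrak{c}$ colourings, functions, or basic maps against which to diagonalize, so pairs cannot be handled one at a time: the whole construction hinges on setting up the independent-family branching scheme so that a single action at stage $\xi$ propagates correctly to all $2^{\mathfrak{c}}$ branches, and then on checking that the finite-intersection property of the scheme --- together with selectivity in (1) and the p-point property in (2) --- is preserved at every successor and every limit stage. Making the $g_\xi$-defeating move genuinely uniform across branches, i.e.\ matching the coordinate at which two branches split with a set whose $g_\xi$-image lies outside the other branch, and interleaving this cleanly with the Ketonen pseudo-intersection steps and the separation tasks, is the delicate point, and it is exactly here that the fine structure of basic maps --- level preservation and dependence on finitely many cells --- is indispensable.
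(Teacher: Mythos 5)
Your opening move for part (1) is correct and is a genuinely different route from the cited argument: granting Theorem \ref{thm.ppt} together with Raghavan's Theorem \ref{thm.RT} (via Corollary \ref{cor.DRT}), or alternatively Theorem \ref{thm.TRamsey}, two selective ultrafilters are Tukey comparable only if isomorphic, so (1) does reduce to producing $2^{\mathfrak{c}}$ selective ultrafilters from $\mathrm{cov}(\mathscr{M})=\mathfrak{c}$; moreover your separate diagonalization against isomorphisms $h_\xi$ is superfluous, since each isomorphism class has size $\mathfrak{c}<2^{\mathfrak{c}}$, so $2^{\mathfrak{c}}$ selective ultrafilters automatically represent $2^{\mathfrak{c}}$ classes. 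Be aware that these rigidity results postdate the cited paper, whose own proof (like your part (2)) has no such collapse available and must diagonalize against the $\mathfrak{c}$ many basic/continuous monotone maps; note also that one cannot shortcut via Fact \ref{cor.5.5}, since $2^{\mathfrak{c}}$ may equal $\mathfrak{c}^+$ and the free-set argument needs families of size $>\mathfrak{c}^+$. So your counting frame is right, but the theorem's substance lives in the transfinite constructions, and there your sketch has genuine gaps.

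Concretely: the justification ``the collection of bad choices for the next homogeneous set is meager'' is false as stated. For a fixed colouring $f:[\om]^2\ra 2$ the homogeneous reals form a closed nowhere dense set, not a comeager one, so a $\mathrm{cov}(\mathscr{M})$-generic refinement will not be homogeneous; and homogeneity genuinely conflicts with the shared-core/independent-family scheme, because for a colouring correlated with a splitting set $D_{\al}$ (colour a pair according to whether it splits $D_{\al}$) every infinite homogeneous set lies on one side of $D_{\al}$ and hence cannot be placed in the common core without destroying the branching at $\al$. What is needed is a Canjar-style lemma, using $\mathrm{cov}(\mathscr{M})=\mathfrak{c}$ as $MA_{<\mathfrak{c}}$ for tailored countable posets: given the core (fewer than $\mathfrak{c}$ generators), the family independent over it, and $f$, one finds finitely many designated coordinates and a core set all of whose cells over those coordinates are homogeneous, while preserving independence of the rest over the enlarged core. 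The same conflict reappears in (2) for the p-point steps (a function coding countably many splitting sets forces the Ketonen-style set to decide them), so ``Ketonen's argument is where $\mathfrak{d}=\mathfrak{c}$ enters'' needs the same relativization to pattern-cells. Finally, the decisive step of (2) --- defeating $g_\xi$ --- is only asserted (``interact suitably with the cells of $g_\xi$''): you must exhibit a single stage-$\xi$ action which witnesses, for every pair of distinct branches, including pairs that split only at coordinates activated after $\xi$, and in both directions, that the $g_\xi$-image of one branch is not cofinal in the other; this requires a case analysis on how $g_\xi$ behaves on sets of the form (core set) $\cap$ (side of $D_{\al}$) --- e.g.\ either some image whose complement can be added to the core, or some set on the opposite side omitted by all images --- and it is exactly the content of the cited Theorem 44 rather than routine bookkeeping. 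As written, the proposal is a sound plan whose two central steps are named but not carried out.
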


The structure of the Tukey types of p-points turns out to embed interesting  configurations.
For example, the diamond configuration embeds into the Tukey types of p-points.

\begin{thm}[Dobrinen/\Todorcevic, Theorem 57 in \cite{Dobrinen/Todorcevic11}]
Assuming Martin's Axiom, there is a p-point with two Tukey-incomparable  Tukey predecessors, which in turn have a common Tukey lower bound.
\end{thm}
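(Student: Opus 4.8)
The plan is to construct, under Martin's Axiom, a single p-point $\mathcal{W}$ on $\om$ and to read the whole (strict) diamond off its projections. Identify $\om$ with $\om^3$ via a fixed bijection, writing base points as triples $(a,b_0,b_1)$, and set $\pi(a,b_0,b_1)=a$, $\pi_0(a,b_0,b_1)=(a,b_0)$, $\pi_1(a,b_0,b_1)=(a,b_1)$, and $\rho(a,b)=a$. Given $\mathcal{W}$, put $\mathcal{U}=\pi(\mathcal{W})$, $\mathcal{U}_0=\pi_0(\mathcal{W})$, $\mathcal{U}_1=\pi_1(\mathcal{W})$ on the respective countable base sets. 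Since $\pi=\rho\circ\pi_0=\rho\circ\pi_1$ we have $\mathcal{U}\le_{RK}\mathcal{U}_j\le_{RK}\mathcal{W}$ for $j=0,1$, hence $\mathcal{U}\le_T\mathcal{U}_j\le_T\mathcal{W}$ by Fact \ref{fact.TRK}. As an ultrafilter Rudin--Keisler below a p-point is a p-point whenever it is nonprincipal, all of $\mathcal{U},\mathcal{U}_0,\mathcal{U}_1$ will be p-points once $\mathcal{W}$ is; I would build $\mathcal{W}$ so that it concentrates on no set $\{a\}\times\om\times\om$, which makes $\mathcal{U}$ nonprincipal and hence $\mathcal{U}_0,\mathcal{U}_1$ nonprincipal as well. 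So the only thing that actually has to be arranged is that $\mathcal{U}_0\not\le_T\mathcal{U}_1$ and $\mathcal{U}_1\not\le_T\mathcal{U}_0$.

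Indeed, all strictness is then automatic. If $\mathcal{W}\le_T\mathcal{U}_0$, then $\mathcal{U}_1\le_T\mathcal{W}\le_T\mathcal{U}_0$, contradicting incomparability; so $\mathcal{U}_0<_T\mathcal{W}$, and symmetrically $\mathcal{U}_1<_T\mathcal{W}$. Likewise, if $\mathcal{U}_0\le_T\mathcal{U}$, then $\mathcal{U}_0\le_T\mathcal{U}\le_T\mathcal{U}_1$, again a contradiction; so $\mathcal{U}<_T\mathcal{U}_0$, and symmetrically $\mathcal{U}<_T\mathcal{U}_1$. Thus $\mathcal{W}$ is a p-point with the two Tukey-incomparable strict predecessors $\mathcal{U}_0,\mathcal{U}_1$, which in turn have the common nonprincipal Tukey lower bound $\mathcal{U}$ --- exactly the diamond.

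To build $\mathcal{W}$ I would run a transfinite recursion of length $\mathfrak{c}$, using the consequence $\mathfrak{p}=\mathfrak{c}$ of $\mathrm{MA}$, producing a $\sse^*$-decreasing tower $\langle W_\xi:\xi<\mathfrak{c}\rangle$ of subsets of $\om^3$, each omitting no first coordinate, closed under finite intersections, and generating an ultrafilter $\mathcal{W}$; since $\mathfrak{p}=\mathfrak{c}$, pseudo-intersections are available at limit stages of cofinality $<\mathfrak{c}$, so the recursion never stalls. Three interleaved families of requirements are met along the way: (a) for each $B\sse\om^3$, some $W_\xi\sse B$ or $W_\xi\sse\om^3\setminus B$ (so $\mathcal{W}$ is an ultrafilter); (b) for each countable subfamily of the tower surviving into $\mathcal{W}$, some later $W_\xi$ is $\sse^*$-below all its members (so $\mathcal{W}$ is a p-point); and (c) for each candidate Tukey reduction, a diagonalization step. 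For (c), note that $\mathcal{U}_0$ and $\mathcal{U}_1$ are p-points, so by Theorem \ref{thm.5} (equivalently Theorem \ref{thm.ppt}) together with the Extension Theorem \ref{thm.PropACtsMaps}, a Tukey reduction $\mathcal{U}_i\le_T\mathcal{U}_{1-i}$ would be witnessed by a \emph{basic} (hence finitary and continuous) monotone map $\sigma:\mathcal{P}(\om\times\om)\to\mathcal{P}(\om\times\om)$; there are only $\mathfrak{c}$ such maps, so I can enumerate all pairs $(\sigma,i)$ in order type $\mathfrak{c}$ and devote one stage to each. At the stage for $(\sigma,i)$, I would choose $W_\xi$ --- controlling how its fibers spread in the coordinate $b_{1-i}$ --- so that, setting $Z=\pi_i(W_\xi)$ or a small variant of it, every $Y\in\mathcal{U}_{1-i}$ has $\sigma(Y)\not\sse Z$; then $\sigma''\mathcal{U}_{1-i}$ is not cofinal in $(\mathcal{U}_i,\contains)$, so $\sigma$ does not witness $\mathcal{U}_i\le_T\mathcal{U}_{1-i}$.

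The technical core, and the step I expect to be the main obstacle, is exactly this single diagonalization step: from the tower built so far and a basic monotone $\sigma$, one must extract one further set $W_\xi$ that is simultaneously a pseudo-intersection of the earlier sets, omits no first coordinate, leaves room for the remaining ultrafilter and p-point requirements, and sparsifies the $\pi_{1-i}$-shadow enough that no $\sigma$-image of a set in the \emph{final} $\mathcal{U}_{1-i}$ can be contained in the designated $Z\in\mathcal{U}_i$. The leverage is that $\sigma$ is finitary: there is a modulus $m$ with $\sigma(Y)\re n$ depending only on $Y\re m(n)$. Making the $b_{1-i}$-slices of $W_\xi$ grow quickly relative to $m$, and independently of the $b_i$-slices, one can force every sufficiently small $Y$ in $\mathcal{U}_{1-i}$ to have $\sigma(Y)$ meet $(\om\times\om)\setminus Z$, and monotonicity of $\sigma$ then propagates this to all of $\mathcal{U}_{1-i}$; the delicate point, handled by careful mod-finite bookkeeping, is that this must remain stable under all later shrinkings of the tower, not merely for the current $W_\xi$. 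That the two branching coordinates $b_0$ and $b_1$ can be manipulated independently is precisely what lets one defeat a candidate reduction in one direction without disturbing the work already done for the other. Once every requirement has been met, $\mathcal{W}$ is the desired p-point and $\mathcal{U},\mathcal{U}_0,\mathcal{U}_1$ complete the configuration.
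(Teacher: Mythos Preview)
The survey paper does not actually prove this theorem; it simply states it and cites the original \cite{Dobrinen/Todorcevic11}. Your overall architecture --- build a single p-point $\mathcal{W}$ on $\om^3$, read the diamond off the projections $\pi,\pi_0,\pi_1$, and diagonalize against the $\mathfrak{c}$ many basic monotone maps supplied by Theorems \ref{thm.5} and \ref{thm.PropACtsMaps} --- is essentially the approach of the original paper, and your observation that Tukey-incomparability of $\mathcal{U}_0$ and $\mathcal{U}_1$ automatically forces all four strict inequalities in the diamond is correct and clean.

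However, your account of the diagonalization step is a sketch, not a proof, and this is where the genuine difficulty lives. You want, for a fixed $Z\in\mathcal{U}_i$, that $\sigma(Y)\not\sse Z$ for every $Y$ in the \emph{final} $\mathcal{U}_{1-i}$; but later stages will shrink the $\pi_{1-i}$-projections further, and your monotonicity remark (``$\sigma(Y)\not\sse Z$ propagates'') goes the wrong direction: from $\sigma(Y)\not\sse Z$ and $Y'\contains Y$ you get $\sigma(Y')\not\sse Z$, whereas what you actually need is the conclusion for \emph{smaller} future $Y'\sse^* Y$, which does not follow. The standard resolution is a case split at stage $(\sigma,i)$: either there is a $Y$ in the current $\pi_{1-i}$-filter with $(\om\times\om)\setminus\sigma(Y)$ still positive for the current $\pi_i$-filter --- in which case one commits that complement to $\mathcal{U}_i$, so $\sigma(Y)\notin\mathcal{U}_i$ permanently and $\sigma$ is killed regardless of later shrinking --- or every such $\sigma(Y)$ already lies in the current $\pi_i$-filter, in which case one argues separately (using the independence of the $b_0$ and $b_1$ coordinates and the finitary nature of $\sigma$) that one can choose $Z$ below all such images. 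Your write-up does not make this split, and without it the ``stable under all later shrinkings'' claim is unsupported.
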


This section concludes with the known facts about ultrafilter on base set $\FIN$.
Recall that these are not technically p-points, but are in fact analogues of selective ultrafilters for the Milliken space.
Hindman's Theorem and the canonical maps from  Theorem \ref{thm.FINcanonical}are used to obtain the following structure theorem.

\begin{thm}[Dobrinen/\Todorcevic, Theorem 57  in \cite{Dobrinen/Todorcevic11}]\label{thm.5Tukey}
Assuming CH, there is a block-basic ultrafilter $\mathcal{U}$ on $\FIN$ such that $\mathcal{U}_{\min,\max}<_T\mathcal{U}$ and  
$\mathcal{U}_{\min}$ and $\mathcal{U}_{\max}$ are Tukey incomparable. 
\end{thm}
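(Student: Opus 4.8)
The plan is to build the desired block-basic ultrafilter $\mathcal{U}$ on $\FIN$ by a transfinite recursion of length $\mathfrak{c}$ under CH, simultaneously arranging three things: that $\mathcal{U}$ is block-basic (equivalently, by Theorem \ref{thm.blockbasicequiv}, that $\mathcal{U}$ has the Ramsey property for block sequences), that $\mathcal{U}_{\min,\max}<_T\mathcal{U}$, and that $\mathcal{U}_{\min}$ and $\mathcal{U}_{\max}$ are Tukey incomparable. First I would set up the recursion so as to produce a $\le^*$-decreasing tower $(X_\al)_{\al<\mathfrak{c}}$ of infinite block sequences whose associated $[X_\al]$ generate $\mathcal{U}$; block-basicity is secured by using Hindman's Theorem to diagonalize against all partitions into two colors, as in Blass's construction of stable ordered union ultrafilters (Theorem 2.4 in \cite{Blass87}). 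At limit stages we use CH together with clause (2) of Theorem \ref{thm.blockbasicequiv} to find a lower bound for the countably many block sequences chosen so far. The nontrivial part of the recursion is to additionally kill all potential Tukey reductions in the two "bad" directions and to secure the one strict inequality, and this is where most of the work lies.

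For the strict inequality $\mathcal{U}_{\min,\max}<_T\mathcal{U}$: the reduction $\mathcal{U}\ge_T\mathcal{U}_{\min,\max}$ is automatic since $\mathcal{U}\ge_{RK}\mathcal{U}_{\min,\max}$ (recall the discussion preceding Definition \ref{def.block-gen}), so it suffices to arrange $\mathcal{U}\not\le_T\mathcal{U}_{\min,\max}$. By Fact \ref{fact.Tukeytopchar}, it is enough to produce a family $\mathcal{X}\sse\mathcal{U}$ of size $\mathfrak{c}$ such that no countable subfamily has intersection in $\mathcal{U}$, and then show that this same combinatorial obstruction cannot live inside $\mathcal{U}_{\min,\max}$; more directly, I would use the canonical-map machinery of Theorem \ref{thm.FINcanonical}(2), which tells us that any monotone cofinal map from $\mathcal{U}_{\min,\max}$ onto (a cofinal subset of) $\mathcal{U}$ would have to be continuous on a set of the form $\{[X]_{\min,\max}:X\le\tilde X\}$, and then diagonalize during the construction to defeat every such continuous candidate. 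Since under CH there are only $\mathfrak{c}$ continuous monotone maps from $\mathcal{P}(\om\times\om)$ into $\mathcal{P}(\FIN)$, they can be enumerated and handled one per stage: at stage $\al$, given the $\al$-th candidate $f_\al$, pick the next block sequence $X_\al\le^* X_\beta$ (all $\beta<\al$) so that $f_\al$ restricted to the relevant $\min$-$\max$ images fails to be cofinal into $\mathcal{U}\re[X_\al]$, using Hindman-type fusion to find a "diagonal" block sequence that escapes the range of $f_\al$.

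The Tukey incomparability of $\mathcal{U}_{\min}$ and $\mathcal{U}_{\max}$ is handled the same way but is conceptually cleaner: by Theorem \ref{thm.blockbasicequiv} a block-basic $\mathcal{U}$ automatically makes $\mathcal{U}_{\min}$ and $\mathcal{U}_{\max}$ selective and Rudin-Keisler incomparable (as noted in the paragraph following that theorem), and by Corollary 12 of \cite{Raghavan/Todorcevic12} Rudin-Keisler incomparable selective ultrafilters are automatically Tukey incomparable. So in fact this third clause comes \emph{for free} once $\mathcal{U}$ is block-basic — no extra diagonalization is needed for it, only for the strict inequality $\mathcal{U}_{\min,\max}<_T\mathcal{U}$. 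Thus the recursion must interleave: (i) Hindman-diagonalization against $2$-colorings to get block-basicity (which simultaneously delivers RK-incomparability and hence Tukey-incomparability of $\mathcal{U}_{\min}$, $\mathcal{U}_{\max}$), and (ii) diagonalization against the $\mathfrak{c}$-many continuous monotone candidate maps $\mathcal{P}(\om\times\om)\to\mathcal{P}(\FIN)$ supplied by Theorem \ref{thm.FINcanonical}(2), to get $\mathcal{U}\not\le_T\mathcal{U}_{\min,\max}$.

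\textbf{Main obstacle.} The hard part will be step (ii): showing that at a typical stage one can genuinely find a block sequence $X_\al\le^* X_\beta$ that simultaneously respects all earlier commitments \emph{and} drives the current continuous candidate $f_\al$ off its target. This requires a fusion argument on block sequences that is compatible with the $\le^*$-tower and with the Hindman coloring diagonalization — one must verify that the set of "good" end-extensions is always nonempty, i.e. that defeating a continuous map is not in conflict with maintaining the Ramsey property. I expect this to reduce to a careful analysis of the $\min$-$\max$ projection: because $\mathcal{U}_{\min,\max}$ is coarser (it only sees two coordinates of each block), a continuous cofinal map from it into $\mathcal{U}$ would be "too rigid," and one should be able to choose $X_\al$ with a prescribed, rapidly growing gap structure so that the $\min$-$\max$ image of $\mathcal{U}\re[X_\al]$ becomes too thin to be the image of any fixed continuous $f_\al$. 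Making that intuition precise — pinning down exactly which growth condition on $X_\al$ defeats $f_\al$ while still leaving room for future diagonalizations — is the crux of the proof.
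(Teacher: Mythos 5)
Your overall architecture is the right one and is essentially the one behind the cited result: a recursion of length $\mathfrak{c}=\om_1$ building a $\le^*$-decreasing tower of block sequences (equivalently, a filter sufficiently generic for $(\FIN^{[\infty]},\le^*)$, which is exactly how the proof is described in Section \ref{sec.Ramsey}), with Hindman-type dense sets securing block-basicity, the inequality $\mathcal{U}\ge_T\mathcal{U}_{\min,\max}$ coming for free from the RK projection, and the third clause reduced to block-basicity itself (Blass's RK-incomparability of the selective projections $\mathcal{U}_{\min},\mathcal{U}_{\max}$ plus Corollary 12 of \cite{Raghavan/Todorcevic12}); that reduction is legitimate, though it leans on a result later than \cite{Dobrinen/Todorcevic11}, where the incomparability had to be handled directly with the canonical-map machinery.

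The genuine gap is that the entire mathematical content of the strict inequality $\mathcal{U}_{\min,\max}<_T\mathcal{U}$ sits in the successor-stage lemma you explicitly leave open, and as stated it is not even a well-posed task. At stage $\al$ you must choose $X_\al$ so that, \emph{however the construction continues}, the candidate $f_\al$ fails to map $\{[Y]_{\min,\max}:[Y]\in\mathcal{U}\}$ cofinally into $\mathcal{U}$; but whether $[Y]\in\mathcal{U}$ and whether $f_\al([Y]_{\min,\max})\in\mathcal{U}$ for the relevant $Y$ are decided only at later stages, so ``escaping the range of $f_\al$'' at stage $\al$ requires a dichotomy-type combinatorial lemma (in the actual argument this is where Milliken-space Ramsey theory and the specific structure of the $\min$-$\max$ projection enter): either some $X_\al\le^*$ the tower seals $f_\al$ permanently, or $f_\al$ could never have witnessed a reduction. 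Your ``rapidly growing gap structure / too rigid'' heuristic is not such a lemma, and you cannot expect the defeat to be automatic: whether \emph{every} block-basic ultrafilter satisfies $\mathcal{U}>_T\mathcal{U}_{\min,\max}$ is precisely Question \ref{q.UUmin}, still open, so the construction must exploit a concrete combinatorial fact that your proposal does not supply. Two smaller points: (i) Theorem \ref{thm.FINcanonical}(2) canonicalizes a hypothetical reduction only on a domain $\{[X]_{\min,\max}:X\le\tilde X\}$ that depends on the final ultrafilter, so to diagonalize against a ground enumeration you must either enumerate all such partial continuous monotone maps (coded by reals) or prove an extension statement in the spirit of Theorem \ref{thm.PropACtsMaps} for this setting; (ii) at countable limit stages what you need is the $\sigma$-closedness of $\le^*$ on block sequences (a diagonal lower bound), not clause (2) of Theorem \ref{thm.blockbasicequiv}, which is the stability property being built and would be circular to invoke during the construction.
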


\begin{question}\label{q.UUmin}
If $\mathcal{U}$ is any block-basic ultrafilter, does it follow that $\mathcal{U}>_T\mathcal{U}_{\min,\max}$?
\end{question}

The next section continues the investigation of structures which embed into the Tukey types of p-points in  fine-tuned manner.


\section{Near the bottom of the Tukey hierarchy:\\
 Initial structures and connections with Ramsey theory}\label{sec.Ramsey}

The  structure of the Tukey types of ultrafilters becomes completely clear, as  does the 
relationship between the Rudin-Keisler and Tukey reducibilities 
near the bottom of the Tukey hierarchy.
It is well-known that  Ramsey ultrafilters are minimal in the Rudin-Keisler hierarchy.
In \cite{Raghavan/Todorcevic12}, \Todorcevic\ showed that the analogous result holds in the Tukey hierarchy.
(As usual, we only consider nonprincipal ultrafilters, which is why  Ramsey ultrafilters are called  minimal.)
In fact, he proved a much finer result, strengthening a previous result in \cite{Dobrinen/Todorcevic11} which showed that every ultrafilter Tukey reducible to a Ramsey ultrafilter is basically generated.
The following is an equivalent re-statement of Theorem 24 in \cite{Raghavan/Todorcevic12}.

\begin{thm}[\Todorcevic, Theorem 24 in \cite{Raghavan/Todorcevic12}]\label{thm.TRamsey}
Let $\mathcal{U}$ be a Ramsey ultrafilter.
If $\mathcal{V}\le_T\mathcal{U}$, then either $\mathcal{V}$ is isomorphic to a Fubini iterate of $\mathcal{U}$, or else $\mathcal{V}$ is principal.
\end{thm}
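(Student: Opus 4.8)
The plan is to combine the finitary‑reduction theory for basically generated ultrafilters with the classical canonical Ramsey theorem for fronts, localized to the selective ultrafilter $\mathcal{U}$. First dispose of the trivial case: if $\mathcal{V}$ is principal there is nothing to prove, so assume $\mathcal{V}$ is a nonprincipal ultrafilter with $\mathcal{V}\le_T\mathcal{U}$. Since $\mathcal{U}$ is Ramsey it is selective and, in particular, a p-point, hence basically generated (Theorem~\ref{thm.2}). Therefore Theorem~\ref{thm.RbgRK} applies and produces a $\sse$-antichain $P\sse\FIN$ with $\mathcal{U}(P)\equiv_T\mathcal{U}$ and $\mathcal{V}\le_{RK}\mathcal{U}(P)$. (One could instead bypass Theorem~\ref{thm.RbgRK} and work directly with a basic monotone cofinal map $f\colon\mathcal{U}\ra\mathcal{V}$ supplied by Theorem~\ref{thm.ppt}, reading a front with the same features off the ``deciding'' initial segments of $f$; the two routes agree after thinning inside $\mathcal{U}$.)

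The next step is to identify $\mathcal{U}(P)$, up to isomorphism, with a Fubini iterate of $\mathcal{U}$. Using the selectivity of $\mathcal{U}$ (in the guise of the Galvin--Prikry and Nash--Williams theory localized to $\mathcal{U}$, cf.\ \cite{TodorcevicBK10}), thin $P$ to a front $F=P\cap[Y]^{<\om}$ on some $Y\in\mathcal{U}$, and then, restricting once more inside $\mathcal{U}$, arrange that $F$ is flat-top and has $\mathcal{U}$-large branching at every node of its tree $C(F)$. Then $\mathcal{U}(P)\re F$ is a filter basis for $\mathcal{U}(P)$, and selectivity shows it is in fact the ultrafilter on $F$ generated by the $\vec{\mathcal{U}}$-trees, every node ultrafilter being a copy of $\mathcal{U}$. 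By Fact~\ref{fact.precise.connection} (and the correspondence between flat-top fronts and iterated Fubini products spelled out just before it), this ultrafilter is a countable iteration of Fubini products of $\mathcal{U}$ with itself. Hence $\mathcal{V}\le_{RK}\mathcal{W}$ for some Fubini iterate $\mathcal{W}$ of $\mathcal{U}$.

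It remains to carry out the descent step: if $\mathcal{V}\le_{RK}\mathcal{W}$, where $\mathcal{W}$ is isomorphic to the ultrafilter on a flat-top front $B$ generated by the $\vec{\mathcal{U}}$-trees with every $\mathcal{U}_c\cong\mathcal{U}$ Ramsey, then $\mathcal{V}$ is principal or isomorphic to a Fubini iterate of $\mathcal{U}$. Fix $g\colon B\ra\om$ with $g(\mathcal{W})=\mathcal{V}$ and apply the \Pudlak--\Rodl-style canonical Ramsey theorem for fronts, localized to $\mathcal{U}$ (equivalently, carried out in the space of $\vec{\mathcal{U}}$-trees): there is a $\vec{\mathcal{U}}$-tree $T\sse\hat{B}$ such that $g$ is canonical on $\{b\in B:b\in[T]\}$, i.e.\ there is a subfront $\bar B$, consisting of initial segments of members of $[T]$ and having $\mathcal{U}$-large branching, such that $g(b)$ depends only on, and is injective as a function of, the unique $\bar b\in\bar B$ with $\bar b\sqsubseteq b$. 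Then $g(\mathcal{W})=g(\mathcal{W}\re[T])$ equals the push-forward of $\mathcal{W}\re[T]$ under $b\mapsto\bar b$, which is the ultrafilter on $\bar B$ generated by the $\vec{\mathcal{U}}$-trees, hence again a Fubini iterate of $\mathcal{U}$ by Fact~\ref{fact.precise.connection}; the degenerate case $\bar B=\{\emptyset\}$ means $g$ is constant on a $\vec{\mathcal{U}}$-tree, forcing $\mathcal{V}$ principal. This establishes the dichotomy.

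The principal obstacle is the third step: establishing the canonical Ramsey theorem for fronts in the $\mathcal{U}$-localized form and, above all, verifying that its canonical forms are exactly the projections onto $\mathcal{U}$-full subfronts, whose induced ultrafilters are Fubini sub-iterates of $\mathcal{U}$ -- this is precisely where the Ramsey hypothesis on $\mathcal{U}$ is genuinely used (a mere p-point would not suffice, since selectivity is needed both here and to realize $\mathcal{U}(P)$ as a bona fide ultrafilter on a flat-top front). A secondary, more routine point requiring care is the second step: one must check that every thinning can be performed with the relevant sets lying in $\mathcal{U}$ and that the resulting front really is flat-top with $\mathcal{U}$-large branching, so that $\mathcal{W}$ and the subfront $\bar B$ are genuinely built from copies of $\mathcal{U}$.
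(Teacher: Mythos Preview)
Your overall plan matches the paper's: reduce to a front-indexed ultrafilter Tukey-equivalent to $\mathcal U$ carrying an RK map onto $\mathcal V$, then canonize that map via \Pudlak--\Rodl\ localized to $\mathcal U$. The paper takes the direct route you mention parenthetically --- Theorem~\ref{thm.5}, building the front $\mathcal F$ from the minimal initial segments $a\sqsubset U$ where $\hat f(a)\ne\emptyset$ and setting $g(a)=\min\hat f(a)$ --- rather than going through Theorem~\ref{thm.RbgRK} and flat-top fronts; your detour is workable, but the flat-top restriction is unnecessary (the paper's sketch never invokes it).

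There is, however, a genuine error in your third step. The \Pudlak--\Rodl\ canonical maps are \emph{irreducible} --- inner ($\vp(b)\sse b$) and Nash--Williams on their range --- they are \emph{not} projections onto initial segments. Already on $[\om]^2$ the ``max'' equivalence relation is canonical, with $\vp(\{i,j\})=\{\max(i,j)\}$, which is not an initial segment of $\{i,j\}$; so your description ``$g(b)$ depends only on the unique $\bar b\in\bar B$ with $\bar b\sqsubseteq b$'' is false, and the subfront $\bar B$ you posit need not exist. The correct final step (what the paper's last line points to) is: the range $\vp[\mathcal F\re X]$ of an irreducible map is itself a front, and the push-forward of the Ramsey front-ultrafilter along any inner Nash--Williams $\vp$ is again the Ramsey front-ultrafilter on that image front --- hence a Fubini iterate of $\mathcal U$. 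That argument genuinely uses both ``inner'' and ``Nash--Williams'', not your initial-segment hypothesis. You have correctly located where the work lies, but mis-identified the shape of the canonical forms; fixing this is exactly what is needed to close the proof.
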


This theorem provides  a sharp result relating Rudin-Keisler and Tukey reducibilities with several important structural consequences.
To begin, 
Theorem \ref{thm.TRamsey} shows that the Tukey type of a Ramsey ultrafilter $\mathcal{U}$ consists exactly of the Rudin-Keisler types of ultrafilters which are countable iterations of Fubini products of $\mathcal{U}$.
It also  follows that  Ramsey ultrafilters are minimal in the Tukey hierarchy, since every Fubini iterate of a Ramsey ultrafilter is Tukey equivalent to that ultrafilter (recall Theorem \ref{thm.DT35})
Since Fubini products of p-points are never p-points,
we see that, up to isomorphism, there is exactly one p-point in the Tukey type of $\mathcal{U}$, namely  $\mathcal{U}$ itself.
\Todorcevic's proof makes essential use of continuous Tukey reductions for p-points and a Ramsey-classification theorem of \Pudlak\ and \Rodl\ (see Theorem \ref{thm.PR} below).

Given an ultrafilter $\mathcal{U}$, we use the terminology {\em initial Tukey structure below $\mathcal{U}$}
to refer to the structure of all Tukey types of nonprincipal ultrafilters Tukey reducible to $\mathcal{U}$.
The above result then says that the initial Tukey structure below a Ramsey ultrafilter is a singleton, consisting simply of the Tukey type of the Ramsey ultrafilter itself.

We  shall provide an outline of the proof of Theorem \ref{thm.TRamsey}, as it is instructive 
for gaining a feel for the sorts of proofs of the other theorems in this section.
The {\em Ellentuck space} is the triple $([\om]^{\om},\sse,r)$, where $r$ is the finitization function.
Members $X\in[\om]^{\om}$ are considered as infinite increasing sequences of natural numbers, $X=\{x_0,x_1,x_2,\dots\}$.
For each $n<\om$, the $n$-th approximation to $X$ is $r_n(X)=\{x_i:i<n\}$; in particular, $r_0(X)=\emptyset$.
The basic open sets of the Ellentuck topology are sets of the form 
$[a,X]=\{Y\in [\om]^{\om}: a\sqsubset Y$ and $Y\sse X\}$.
Thus, the Ellentuck topology is finer than the metric topology on $[\om]^{\om}$.

The Ellentuck space satisfies the following important Ramsey property:
Whenever a subset $\mathcal{X}\sse[\om]^{\om}$
has the property of Baire in the Ellentuck topology, 
then that set is {\em Ramsey},
meaning that every open set contains a basic open set either contained in $\mathcal{X}$ or else disjoint from $\mathcal{X}$.
Topological Ramsey spaces are topological spaces which generalize the Ellentuck space in the sense that every subset with the property of Baire is Ramsey.
Rather than provide all relevant background here, we refer the reader interested in general topological Ramsey spaces to \cite{TodorcevicBK10}.

A {\em front} on the Ellentuck space is a collection  $\mathcal{F}\sse[\om]^{<\om}$ such that
(a) For each $X\in[\om]^{\om}$, there is an $a\in \mathcal{F}$ for which $a\sqsubset X$; and (b)
For all $a\ne b$ in $\mathcal{F}$, $a\not\sqsubset b$.
A map $\vp$ from a front into $\om$ is called {\em irreducible} if 
(a) $\vp$ is {\em inner}, meaning that $\vp(a)\sse a$ for all $a\in\mathcal{F}$; and
(b) $\vp$ is {\em Nash-Williams}, meaning that $\vp(a)\not\sqsubset\vp(b)$ for all $a,b\in\mathcal{F}$ such that $\vp(a)\ne\vp(b)$.
A  {\em barrier} is a front which is also {\em Sperner}, meaning that for all $a\ne b$ in $\mathcal{F}$, $a$ is not a proper subset of $b$.

Given a front $\mathcal{F}$ and an $X\in[\om]^{\om}$,
we let $\mathcal{F}\re X$ denote $\{a\in \mathcal{F}:a\sse X\}$.
Given an equivalence relation $E$ on a barrier $\mathcal{F}$,
we say that an irreducible map $\vp$ {\em represents} $E$ on $\mathcal{F}\re X$ if for all $a,b\in\mathcal{F}\re X$, we have $a\ E\ b\lra \vp(a)=\vp(b)$.

The following theorem of \Pudlak\ and \Rodl\ is the basis for all subsequent canonization theorems for fronts on general topological Ramsey spaces considered in the papers
\cite{Dobrinen/Todorcevic12} and \cite{Dobrinen/Todorcevic13}.

\begin{thm}[\Pudlak/\Rodl, \cite{Pudlak/Rodl82}]\label{thm.PR}
For any barrier $\mathcal{F}$ on the Ellentuck space and any equivalence relation on $\mathcal{F}$,
there is an $X\in[\om]^{\om}$ and an irreducible map $\vp$ such that
the equivalence relation restricted to $\mathcal{F}\re X$ is represented by $\vp$.
\end{thm}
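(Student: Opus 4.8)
The plan is to prove the statement by induction on the rank of the barrier $\mathcal{F}$, reducing it via the Ramsey property of the Ellentuck space to a stabilization argument on the successor levels. First I would set up the induction: recall that each barrier $\mathcal{F}$ on $[\om]^{\om}$ has a countable ordinal rank, where rank $0$ (or $1$, depending on convention) corresponds to $\mathcal{F}=\{\{n\}:n\in\om\}$ or a trivial front, and for $n\in\om$ the ``shift'' $\mathcal{F}_{\{n\}}:=\{a\setminus\{n\}: a\in\mathcal{F},\ \min(a)=n\}$ is a barrier on $\om\setminus(n+1)$ of strictly smaller rank. In the base case, the only equivalence relations on $\{\{n\}:n\in\om\}$ are given by partitioning $\om$; using the Ramsey property (or simply the pigeonhole principle packaged as the Nash--Williams theorem) one finds $X\in[\om]^{\om}$ on which the partition is either constant --- represented by the empty map $a\mapsto\emptyset$ --- or discrete --- represented by the identity $a\mapsto a$. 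Both are irreducible.

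Next, for the inductive step, I would first apply the induction hypothesis simultaneously to all the shifts $\mathcal{F}_{\{n\}}$ and the restricted equivalence relations $E_{\{n\}}$ (where $a\ E_{\{n\}}\ b$ iff $\{n\}\cup a\ E\ \{n\}\cup b$). A diagonalization (fusion) argument inside the Ellentuck space produces a single $Y\in[\om]^{\om}$ such that for every $n\in Y$, the equivalence relation $E_{\{n\}}$ on $\mathcal{F}_{\{n\}}\re Y$ is represented by an irreducible map $\vp_n$. The remaining work is to splice these $\vp_n$ together into a single irreducible $\vp$ on $\mathcal{F}\re X$ for some $X\sse Y$. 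The natural candidate is $\vp(\{n\}\cup a) = \{n\}\cup\vp_n(a)$ or $\vp(\{n\}\cup a)=\vp_n(a)$, but as stated this need not be Nash--Williams: the images $\vp_n(a)$ for different $n$ can be comparable under $\sqsubset$, and worse, the relation ``$\min$ is recorded or not'' can itself vary with $n$. This is where I would apply the Ramsey property once more, coloring each $n\in Y$ by a finite amount of data describing the ``type'' of $\vp_n$ (e.g.\ whether $\min$ is an argument of $\vp_n$ essentially, the length pattern, etc.) and thinning to $X$ on which this type is constant; then a further Nash--Williams/Ramsey thinning ensures the images across different $n$ are pairwise $\sqsubset$-incomparable whenever unequal. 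On such an $X$ the spliced map is genuinely irreducible and represents $E$.

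Finally, I would observe that the theorem is stated for a \emph{barrier}, which is Sperner, and that Sperner-ness of $\mathcal{F}$ together with inner-ness of the candidate map is what lets one control comparability of images cleanly (a front that is not Sperner requires first passing to a sub-barrier, which one can always do by a standard thinning on the Ellentuck space); so the hypothesis that $\mathcal{F}$ is a barrier, not merely a front, is used exactly at the splicing step. The main obstacle is precisely this splicing: making sure the finitely-many ``types'' of the $\vp_n$ are homogenized and the images made pairwise incomparable so that the amalgamated map is simultaneously inner and Nash--Williams. Everything else --- the rank induction, the base case, the fusion producing $Y$ --- is routine given the Ellentuck--Galvin--Prikry machinery and the fact that barriers are well-founded under the restriction ordering.
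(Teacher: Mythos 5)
The paper itself offers no proof of this statement: it is quoted from \Pudlak\ and \Rodl\ \cite{Pudlak/Rodl82}, so your sketch has to be measured against the known proofs. Your overall frame --- induction on the rank of the barrier, fusion to a single $Y$ on which all the shifts $\mathcal{F}_{\{n\}}$ carry canonical maps $\vp_n$, then a final thinning --- is indeed the standard shape of the argument, and the base case is fine. But there is a genuine gap at the splicing step, and it is not the difficulty you single out. The crux of the theorem is that $E$ may relate sets with \emph{different} minima, and the induction hypothesis, applied fiberwise to the relations $E_{\{n\}}$, gives no information at all about these cross-fiber identifications. Your two candidate maps, $\vp(\{n\}\cup a)=\{n\}\cup\vp_n(a)$ (which never identifies elements of different fibers) and $\vp(\{n\}\cup a)=\vp_n(a)$ (which identifies them exactly when the values happen to coincide), can represent $E$ only if, after thinning, one knows that for $\min(a)=n\neq m=\min(b)$ the relation $a\mathrel{E}b$ holds if and only if $\vp_n(a\setminus\{n\})=\vp_m(b\setminus\{m\})$, or else never holds. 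Nothing in your sketch establishes this coherence: homogenizing a ``type'' of $\vp_n$ over $n\in Y$ and making images pairwise $\sqsubset$-incomparable addresses only the Nash--Williams property of the spliced map, not whether it represents $E$. This cross-fiber stabilization is precisely where the real work of the \Pudlak--\Rodl\ theorem lies; one must canonize the relation on \emph{pairs} of barrier elements coming from two different fibers, i.e., apply the Ramsey property to a derived barrier of higher rank (or, as in \Todorcevic's abstract treatment via topological Ramsey spaces, run a combinatorial-forcing argument deciding when two elements are ``mixed'' or ``separated''), in close analogy with the \Erdos--Rado canonical theorem, whose proof must color $2k$-element sets in order to canonize an equivalence relation on $k$-element sets. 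A coloring of single integers $n\in Y$ cannot capture this.

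Two smaller points. First, the ``finite amount of data describing the type of $\vp_n$'' is not finite data: $\vp_n$ is a map defined on a barrier on an infinite set, so the coloring you propose to homogenize is not well-defined as stated; at best one can stabilize finitely many derived invariants, and choosing the right ones is again the cross-fiber problem. Second, the role you assign to Sperner-ness is off: the passage from fronts to barriers is a routine preliminary thinning, whereas the barrier structure is used throughout the rank induction (well-foundedness of $\sqsubset$ on $\hat{\mathcal{F}}$), not ``exactly at the splicing step.''
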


The proof of Theorem \ref{thm.TRamsey} proceeds as follows.
Let $\mathcal{U}$ be a Ramsey ultrafilter and let $\mathcal{V}\le_T\mathcal{U}$.
Since $\mathcal{U}$ is in particular a p-point, there is a continuous cofinal map witnessing this Tukey reduction, say $f:\mathcal{U}\ra\mathcal{V}$.
Since $f$ is continuous, it is generated by a monotone (level and initial segment preserving) finitary map $\hat{f}:[\om]^{<\om}\ra[\om]^{<\om}$; that is, for each $U\in\mathcal{U}$, $f(U)=\bigcup_{k<\om}\hat{f}(U\cap k)$.
For each $U\in\mathcal{U}$, take the minimal finite initial segment $a\sqsubset U$ for which $\hat{f}(a)\ne\emptyset$.
The collection of all such $a$ form a front on $\mathcal{U}$, denoted $\mathcal{F}$.
Now define the function $g:\mathcal{F}\ra \om$ by $g(a)=\min(\hat{f}(a))$.
Then $g$ induces an equivalence relation on $\mathcal{F}$.
It follows from $\mathcal{U}$ being Ramsey that there is an $X\in\mathcal{U}$ and an irreducible function $\vp$ which canonizes the equivalence relation induced by $g$ on $\mathcal{F}\re X$.

We now transfer the ultrafilter $\mathcal{U}$ on base set $\om$ to an ultrafilter on base set $\mathcal{F}\re X$ as follows.
For each $U\in\mathcal{U}$,
let $\mathcal{F}\re U=\{a\in \mathcal{F}:a\sse U\}$.
Since $\mathcal{U}$ is Ramsey, it follows that the collection of sets $\mathcal{F}\re U$, $U\in\mathcal{U}\re X$, generates an ultrafilter on base $\mathcal{F}\re X$,
 denoted as $\mathcal{U}\re\mathcal{F}\re X$.
One then proves that $\mathcal{U}\re\mathcal{F}\re X$ is Tukey equivalent to $\mathcal{U}$ and 
 that $g(\mathcal{U}\re\mathcal{F}\re X)$ is in fact equal to $\mathcal{V}$.
The fact that $\vp$ is irreducible is then used to show that $\mathcal{V}$ is Rudin-Keisler equivalent to an iterated Fubini power of $\mathcal{U}$.

This concludes our sketch of the proof of Theorem \ref{thm.TRamsey}.
The proofs of the theorems below follow this general outline, provided that there are  Ramsey-classification theorems available for the topological Ramsey spaces associated with the given ultrafilters.
The discovery of the Ramsey-classification theorems for the new topological Ramsey spaces referred to below was the key to finding the initial Tukey structures of their associated ultrafilters. 
Because of space constraints, we do not give the Ramsey-classification theorems here, but refer the interested reader to the relevant papers.

Given Theorem \ref{thm.TRamsey}, the most natural question  to ask is whether Ramsey ultrafilters are an anomaly, or whether similar theorems hold more generally for ultrafilters which are Ramsey-like.
Weakly Ramsey ultrafilters are
the most reasonable Ramsey-like ultrafilters on which to begin this line of investigation.
Every weakly Ramsey ultrafilter is  Rudin-Keisler minimal above a Ramsey ultrafilter, as was shown by Blass in 
\cite{Blass74}.
In \cite{Laflamme89}, Laflamme constructed $\sigma$-closed  partial orders $\bP_{\al}$, $1\le\al<\om_1$, which force weakly Ramsey ultrafilters ($\bP_1$) and other rapid p-points satisfying weak partition properties  ($\bP_{\al}$, $2\le\al<\om_1$).
He  proved that $\bP_{\al}$ forces an ultrafilter with Rudin-Keisler predecessors forming exactly a decreasing chain of order-type $\al+1$, the minimum ultrafilter being Ramsey.
Moreover, he showed that these forced ultrafilters
 have {\em complete combinatorics},
meaning that there are some combinatorial statements such that any ultrafilter satisfying those statements is forcing generic over HOD$(\mathbb{R})^{V[G]}$,
where $V[G]$ is the model obtained by L\'{e}vy collapsing a Mahlo cardinal to $\aleph_1$ and HOD$(\mathbb{R})^{V[G]}$ denotes the model of all sets in $V[G]$ which are hereditarily definable from ordinals and members of $\mathbb{R}$.

In \cite{Dobrinen/Todorcevic12} and \cite{Dobrinen/Todorcevic13},
Dobrinen and \Todorcevic\ extract the essential properties of Laflamme's forcings $\bP_{\al}$, $1\le\al<\om_1$,  and construct new topological Ramsey spaces $\mathcal{R}_{\al}$ forcing equivalent to $\bP_{\al}$.
Thus, for each $1\le\al<\om_1$, 
 ultrafilters $\mathcal{U}_{\al}$ associated with the space $\mathcal{R}_{\al}$ satisfy all the same partition properties as Laflamme's ultrafilters forced by $\bP_{\al}$.
The fact that the $\mathcal{R}_{\al}$ are topological Ramsey spaces puts at one's disposal the available abstract Ramsey theory from \cite{TodorcevicBK10}.
This is employed in  \cite{Dobrinen/Todorcevic12} and \cite{Dobrinen/Todorcevic13}  to prove new Ramsey-classification theorems which generalize the \Pudlak-\Rodl\ Theorem.
These theorems are crucial for allowing one to 
applying ideas from  \Todorcevic's original argument to the new topological spaces.
The new Ramsey-classification theorems are used to decode the Rudin-Keisler types within the Tukey types of the ultrafilters $\mathcal{U}_{\al}$.
From this, the structure of the Tukey types of all ultrafilters Tukey reducible to $\mathcal{U}_{\al}$ is made clear.
Though the Ramsey-classification theorems are essential to this work, in keeping the focus of this paper on Tukey, we refer the interested reader to the aforementioned papers.

We point out that
topological Ramsey spaces provide precise understanding of the mechanisms at work behind complete combinatorics.
A 
theorem of \Todorcevic\ (Theorem 4.9  in \cite{Farah98}) states that in the presence
of a supercompact  cardinal, an ultrafilter on $\omega$ is selective if and only if it is
generic for $\mathcal{P}(\om)/\Fin$ over $L(\mathbb{R})$.
Similar characterizations were recently shown for a large class of
ultrafilters forming a precise hierarchy above selective ultrafilters
(see \cite{Dobrinen/Todorcevic12}, \cite{Dobrinen/Todorcevic13},
and \cite{Mijares/Nieto13}).

We now present an overview of the known initial Tukey structures.
The following shows that the initial Tukey structures for Laflamme's ultrafilters are exactly the same as the Rudin-Keisler structures which he found.
The following theorem combines
Theorem 5.18 for $\mathcal{U}_1$ in \cite{Dobrinen/Todorcevic12} and Theorem 5.13 for $\mathcal{U}_{\al}$, $2\le \al<\om_1$, in \cite{Dobrinen/Todorcevic12}.

\begin{thm}[Dobrinen/\Todorcevic]\label{thm.DTR_1}
Given $1\le\al<\om_1$, let $\mathcal{U}_{\al}$ be an ultrafilter associated with $\mathcal{R}_{\al}$.
Then the initial structure of the Tukey below $\mathcal{U}_{\al}$ is exactly a decreasing chain of order type $(\al+1)^*$.
\end{thm}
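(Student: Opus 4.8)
The plan is to mirror the proof of Theorem~\ref{thm.TRamsey}, substituting for the \Pudlak--\Rodl\ Theorem the Ramsey-classification theorem for the topological Ramsey space $\mathcal{R}_\al$ established in \cite{Dobrinen/Todorcevic12,Dobrinen/Todorcevic13}, and then using Laflamme's computation of the Rudin--Keisler predecessors of the $\bP_\al$-generic ultrafilter to pin down the resulting list of canonical maps. Concretely, let $\mathcal{V}\le_T\mathcal{U}_\al$ be nonprincipal (principal $\mathcal{V}$ sits at the bottom of the chain). Since $\mathcal{U}_\al$ is a p-point, Theorem~\ref{thm.5} gives a monotone cofinal map $f:\mathcal{U}_\al\ra\mathcal{V}$ that is basic, hence generated by a finitary, level- and initial-segment-preserving $\hat f$. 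Working inside $\mathcal{R}_\al$ instead of $[\om]^\om$: for each member $X$ let $r_n(X)$ be the shortest approximation at which $\hat f$ is first nonempty; the collection $\mathcal{F}$ of these approximations is a front on $\mathcal{R}_\al$, and $g(a):=\min(\hat f(a))$ induces an equivalence relation on $\mathcal{F}$. The Ramsey-classification theorem for $\mathcal{R}_\al$ then yields a member $X$ — which, by the Ramsey property of $\mathcal{U}_\al$, may be taken in $\mathcal{U}_\al$ — and an irreducible map $\vp$ representing this equivalence relation on $\mathcal{F}\re X$. The transfer lemmas of the Ramsey case go through: the sets $\mathcal{F}\re U$, $U\in\mathcal{U}_\al\re X$, generate an ultrafilter $\mathcal{U}_\al\re\mathcal{F}\re X$ on the countable base $\mathcal{F}\re X$ with $\mathcal{U}_\al\re\mathcal{F}\re X\equiv_T\mathcal{U}_\al$, and $g(\mathcal{U}_\al\re\mathcal{F}\re X)=\mathcal{V}$.

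Next I would carry out the combinatorial analysis of the irreducible maps on fronts of $\mathcal{R}_\al$. The claim to prove is that, after a further shrinking of $X$, the equivalence relation represented by $\vp$ on $\mathcal{F}\re X$ coincides with the one induced by one of exactly $\al+1$ canonical projections $\pi_\be$, $\be\le\al$, where $\pi_\be$ corresponds to the copy of $\mathcal{R}_\be$ embedded in $\mathcal{R}_\al$ and $\pi_\be(\mathcal{U}_\al)$ is an ultrafilter associated with $\mathcal{R}_\be$ (hence a rapid p-point). This is the step where the ordinal $\al$ enters, and it runs parallel to Laflamme's identification of the Rudin--Keisler predecessors of the $\bP_\al$-generic ultrafilter. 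Granting it, $\mathcal{V}$ is Rudin--Keisler equivalent to a countable Fubini iterate of $\pi_\be(\mathcal{U}_\al)$ for a unique $\be\le\al$; since $\pi_\be(\mathcal{U}_\al)$ is a rapid p-point, Theorem~\ref{thm.DT35} and the remark following it give that every Fubini iterate of it is Tukey equivalent to it, so $\mathcal{V}\equiv_T\pi_\be(\mathcal{U}_\al)$. Thus every nonprincipal $\mathcal{V}\le_T\mathcal{U}_\al$ is Tukey equivalent to one of the $\al+1$ ultrafilters $\pi_0(\mathcal{U}_\al),\dots,\pi_\al(\mathcal{U}_\al)=\mathcal{U}_\al$, and $\pi_\be(\mathcal{U}_\al)\le_{RK}\pi_{\be'}(\mathcal{U}_\al)$ for $\be<\be'$ yields $\pi_\be(\mathcal{U}_\al)\le_T\pi_{\be'}(\mathcal{U}_\al)$, so these form a $\le_T$-chain.

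Finally I would establish strictness. If $\pi_\be(\mathcal{U}_\al)\equiv_T\pi_{\be'}(\mathcal{U}_\al)$ with $\be<\be'$, then applying the canonization to $\pi_{\be'}(\mathcal{U}_\al)$ shows this p-point — being no nontrivial Fubini iterate — is Rudin--Keisler isomorphic to some $\pi_\ga(\mathcal{U}_\al)$ with $\ga\le\be<\be'$; but Laflamme's theorem (transported to $\mathcal{R}_\be$ and $\mathcal{R}_{\be'}$) gives these two ultrafilters Rudin--Keisler predecessor chains of distinct lengths, a contradiction. Hence the Tukey types of nonprincipal ultrafilters below $\mathcal{U}_\al$ are exactly those of $\pi_\be(\mathcal{U}_\al)$, $\be\le\al$, strictly increasing in $\be$: a decreasing chain of order type $(\al+1)^*$. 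The main obstacle is the middle step — proving the new Ramsey-classification theorem for $\mathcal{R}_\al$ and extracting from it the precise list of $\al+1$ canonical projections, together with checking that passing to $\mathcal{F}\re X$ preserves the Tukey type — with the limit-ordinal case of $\al$ requiring extra bookkeeping in how the subspaces $\mathcal{R}_\be$ and their projections are nested.
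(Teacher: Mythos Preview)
Your proposal is correct and follows essentially the same route the paper sketches: mirror the proof of Theorem~\ref{thm.TRamsey} using continuous Tukey reductions from Theorem~\ref{thm.5}, replace the \Pudlak--\Rodl\ theorem by the Ramsey-classification theorem for $\mathcal{R}_\al$, read off that every $\mathcal{V}\le_T\mathcal{U}_\al$ is a Fubini iterate of canonical rapid p-point projections, and then collapse Fubini iterates via Theorem~\ref{thm.DT35}. One small caution: the irreducible $\vp$ on a front of $\mathcal{R}_\al$ need not literally be one of $\al+1$ single projections---as in the Ellentuck case it may encode a Fubini iterate built from several of the canonical p-points (cf.\ Theorem~\ref{thm.DTR_2} and the remark that for $\al\ge 2$ there are Rudin--Keisler incomparable p-points in the same Tukey type)---but your subsequent sentence already absorbs this, and the Tukey conclusion is unaffected since the maximum level $\be$ appearing determines the Tukey type.
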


We remark that the minimal Tukey type below $\mathcal{U}_{\al}$ is that of a Ramsey ultrafilter.

Theorem \ref{thm.DTR_1} is actually a consequence of the fact that every Fubini iterate of a rapid p-point is Tukey equivalent to itself 
and the following finer result, which extends 
Theorem \ref{thm.TRamsey} above.
Here we favor presenting a statement of the theorems understandable from the background given in this survey
rather than their full strength.
The reader is referred to the original papers for the strongest versions of the following, given in  Theorem 5.10 for $\mathcal{U}_1$ in \cite{Dobrinen/Todorcevic12} and Theorem 5.11 for $\mathcal{U}_{\al}$, $2\le \al<\om_1$, in \cite{Dobrinen/Todorcevic13}.

\begin{thm}[Dobrinen/\Todorcevic]\label{thm.DTR_2}
Let $1\le\al<\om_1$, and let $\mathcal{V}\le_T\mathcal{U}_{\al}$.
Then the Tukey type of $\mathcal{V}$ consists precisely of the isomorphism classes of ultrafilters which are iterated Fubini products of ultrafilters from among a fixed countable collection of rapid p-points.
\end{thm}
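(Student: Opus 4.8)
The plan is to reproduce, for the spaces $\mathcal{R}_\al$, the argument sketched above for Theorem~\ref{thm.TRamsey}, with the \Pudlak--\Rodl\ Theorem replaced by the Ramsey-classification theorem established for $\mathcal{R}_\al$ in \cite{Dobrinen/Todorcevic12} (for $\al=1$) and \cite{Dobrinen/Todorcevic13} (for $2\le\al<\om_1$). Fix a monotone cofinal map $f:\mathcal{U}_\al\ra\mathcal{V}$ witnessing $\mathcal{V}\le_T\mathcal{U}_\al$. Since $\mathcal{U}_\al$ is a rapid p-point, Theorem~\ref{thm.5} (equivalently Theorem~\ref{thm.ppt}) lets us pass to a cofinal subset of $\mathcal{U}_\al$ on which $f$ is basic, so that $f$ is generated by a monotone, level and initial segment preserving finitary map $\hat f$ on the finite approximations of members of $\mathcal{R}_\al$: $f(A)=\bigcup_k\hat f(r_k(A))$. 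For each member $A$ of this cofinal set let $a_A=r_{m_A}(A)$ where $m_A$ is least with $\hat f(r_{m_A}(A))\ne\emptyset$; the collection $\mathcal{F}$ of all such $a_A$ is a front on $\mathcal{R}_\al$. Define $g:\mathcal{F}\ra\om$ by $g(a)=\min(\hat f(a))$, which induces the equivalence relation $a\sim b\lra g(a)=g(b)$ on $\mathcal{F}$.

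Next, I would apply the Ramsey-classification theorem for $\mathcal{R}_\al$ to obtain a member $X$ of $\mathcal{R}_\al$, which can be taken to lie in $\mathcal{U}_\al$, and an irreducible map $\vp$ that represents $\sim$ on $\mathcal{F}\re X$. The decisive structural input here, and the reason these spaces were engineered to be topological Ramsey spaces, is the rigid form of such irreducible maps: on a suitably refined front, the image is again the base of an ultrafilter obtained by iterating Fubini products along a finite tree whose node ultrafilters are drawn from a fixed countable list of rapid p-points attached to $\mathcal{R}_\al$ (the ``axis'' ultrafilters, which are rapid p-points by the construction of $\mathcal{R}_\al$ from Laflamme's forcing $\bP_\al$). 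I would then transfer $\mathcal{U}_\al$ to base set $\mathcal{F}\re X$: using the selectivity of the ultrafilter $\mathcal{U}_\al$ coming from $\mathcal{R}_\al$ being a topological Ramsey space, the sets $\mathcal{F}\re A$, $A\in\mathcal{U}_\al\re X$, generate an ultrafilter $\mathcal{U}_\al\re\mathcal{F}\re X$; one checks $\mathcal{U}_\al\re\mathcal{F}\re X\equiv_T\mathcal{U}_\al$ (so it is Tukey above $\mathcal{V}$ via the same data) and that the $g$-image of $\mathcal{U}_\al\re\mathcal{F}\re X$ is exactly $\mathcal{V}$, using minimality of $\mathcal{F}$ and cofinality of $f$.

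Finally, since $\vp$ is inner and Nash--Williams, the $g$-pushforward of $\mathcal{U}_\al\re\mathcal{F}\re X$ is isomorphic to its $\vp$-pushforward, which by the structure theory just quoted is isomorphic to an iterated Fubini product of ultrafilters from the fixed countable family of rapid p-points; hence $\mathcal{V}$ is isomorphic to such a Fubini iterate. For the reverse inclusion, every iterated Fubini product built from this family is Tukey equivalent to $\mathcal{V}$: each member is a rapid p-point, so by Theorem~\ref{thm.DT35} it absorbs its own Fubini powers up to $\equiv_T$, and the general fact recorded before Theorem~\ref{thm.DTR_2} that every Fubini iterate of a rapid p-point is Tukey equivalent to itself handles the mixed iterates; running the forward argument for an arbitrary $\mathcal{W}\equiv_T\mathcal{V}$ (which also satisfies $\mathcal{W}\le_T\mathcal{U}_\al$) together with this rigidity then pins the Tukey type of $\mathcal{V}$ down to exactly the isomorphism classes of these Fubini iterates.

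\textbf{Main obstacle.} The hard part is the Ramsey-classification theorem for the $\mathcal{R}_\al$ and its structural consequences: one must first prove, using the abstract Ramsey theory of \cite{TodorcevicBK10} for these new spaces, that every equivalence relation on a front of $\mathcal{R}_\al$ is represented on a refinement by an irreducible map, and then identify precisely which ultrafilters occur as images of fronts under irreducible maps --- namely, exactly the iterated Fubini products of the designated countable family of rapid p-points. By contrast, the reduction to continuous (basic) maps, the extraction of the front, the transfer of $\mathcal{U}_\al$ to the front base, and the preservation of Tukey equivalence are faithful adaptations of the corresponding steps in the proof of Theorem~\ref{thm.TRamsey}.
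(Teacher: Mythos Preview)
Your proposal is correct and follows essentially the same approach as the paper: since this is a survey, the paper does not prove Theorem~\ref{thm.DTR_2} directly but explicitly states that its proof follows the outline given for Theorem~\ref{thm.TRamsey}, with the \Pudlak--\Rodl\ Theorem replaced by the Ramsey-classification theorems for the spaces $\mathcal{R}_\al$ from \cite{Dobrinen/Todorcevic12} and \cite{Dobrinen/Todorcevic13}, which is exactly what you do. Your identification of the Ramsey-classification theorem and the structural analysis of irreducible maps on $\mathcal{R}_\al$ as the main obstacle is also in agreement with the paper's emphasis; the only loose end is your handling of the reverse inclusion for mixed Fubini iterates, where the fact that a single rapid p-point absorbs its own Fubini powers does not by itself cover iterates of distinct p-points---this requires the finer structural information about the canonical projection ultrafilters attached to $\mathcal{R}_\al$ that the paper defers to the original sources.
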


The rapid p-points in the previous theorem are exactly obtained from canonical projection maps on ultrafilters from blocks in the Ramsey space $\mathcal{R}_{\al}$.
The rapid p-points inside the Tukey type of $\mathcal{U}_1$ form a Rudin-Keisler strictly increasing chain of order-type $\om$.
This extends an earlier result of Raghavan and \Todorcevic\ in \cite{Raghavan/Todorcevic12} in which they constructed two p-points with the same Tukey class, one of which is Rudin-Keisler strictly above the other.
For $2\le\al<\om_1$, the structure of the isomorphism types of the p-points in the Tukey type of $\mathcal{U}_{\al}$ is more complex, 
but is still understood precisely from the underlying tree structure and the canonical equivalence relations.   
In particular, there are p-points in the  Tukey type of $\mathcal{U}_2$ which are Rudin-Keisler incomparable, the first example of that kind.
See  \cite{Dobrinen/Todorcevic12} and  \cite{Dobrinen/Todorcevic13} for the details.

We conclude this survey with a couple more important open problems regarding the Tukey theory of ultrafilters.

Recall that the proof of Theorem \ref{thm.5Tukey}
 shows that the generic filter for the forcing notion $(\FIN^{[\infty]},\le^*)$ adjoins a block-basic ultrafilter $\mathcal{U}$ on $\FIN$ with the properties stated in Theorem \ref{thm.blockbasicequiv}.
On the other hand, as mentioned above,
 if there is a supercompact cardinal,
then in particular, every block-basic ultrafilter $\mathcal{U}$ on $\FIN$ is generic over $L(\mathbb{R})$ for the forcing notion
$(\FIN^{[\infty]},\le^*)$.
This leads us also to the following related problem from \cite{Dobrinen/Todorcevic11}.

\begin{problem}\label{problem.LR5}
Assume the existence of a supercompact cardinal.
Let $\mathcal{U}$ be an arbitrary block-basic ultrafilter on $\FIN$.
Show that the inner model $L(\mathbb{R})[\mathcal{U}]$
has exactly five Tukey types of ultrafilters on a countable index set.
\end{problem}

We  conclude with the following variant of Isbell's Problem.
\begin{problem}\label{prob.Isbell2}
Is it possible to construct, without using extra axioms of set theory, 
an ultrafilter $\mathcal{U}$ on $\om$ whose Tukey type has cardinality continuum?
\end{problem}

\bibliographystyle{amsplain}
\bibliography{references}

\end{document}